\providecommand{\U}[1]{\protect\rule{.1in}{.1in}}
\newtheorem{theorem}{Theorem}
\theoremstyle{plain}
\newtheorem{corollary}{Corollary}
\newtheorem{definition}{Definition}
\newtheorem{example}{Example}
\newtheorem{lemma}{Lemma}
\newtheorem{proposition}{Proposition}
\newtheorem{remark}{Remark}
\newcommand{\Z}{{\mathbb Z}}
\newcommand{\Ec}{\operatorname{EC}}
\newcommand{\Irr}{\operatorname{Irr}}
\newcommand{\Max}{\operatorname{Max}}
\newcommand{\ann}{\operatorname{ann}}
\numberwithin{equation}{section}
\begin{document}
\title[{\normalsize Quasi divisor topology of modules over domains}]{{\normalsize Quasi divisor topology of modules over domains}}
\author{Mesut Bu\u{g}day}
\address{Department of Mathematics, Marmara University, Istanbul, Turkey. }
\address{Department of Mathematics, Yildiz Technical University, Istanbul, Turkey}
\email{mesut.bugday@marmara.edu.tr, mesut.bugday@std.yildiz.edu.tr}
\author{Dilara Erdemir}
\address{Department of Mathematics, Yildiz Technical University, Istanbul, Turkey}
\email{dilaraer@yildiz.edu.tr}
\author{\"{U}nsal Tekir}
\address{Department of Mathematics, Marmara University, Istanbul, Turkey. }
\email{utekir@marmara.edu.tr}
\author{Suat Ko\c{c}}
\address{Department of Mathematics, Marmara University, Istanbul, Turkey. }
\email{suat.koc@marmara.edu.tr}
\subjclass[2000]{13A15, 54H10, 16D50}
\keywords{Divisor topology, quasi divisor topology, quasi second module, second module, divisible module}

\begin{abstract}
Let $E$ be a module over a domain $A$, and $W(E)^{\#}=W(E)-ann(E)$ where $W(E)=\{a\in A:aE\neq
E\}$. We define an equivalence relation $\sim$ on $W(E)^{\#}$ as follows: $a\sim b$ if and only if $aE=bE$ for any $a,b\in W(E)^{\#}$ and denote $\Ec(W(E)^{\#})$ to be the set of all equivalence classes $[a]$ of $W(E)^{\#}$. We first show that the family $\{U_a\}_{a\in W(E)^\#}$ generates a topology which we called the quasi divisor topology of $A$-module $E$ denoted by $qD_A(E)$ where $U_{a}=\{[b]\in \Ec(W(E)^{\#}):\ aE\subseteq bE\}$ for every $a\in W(E)^{\#}$. This paper examines the connections between
topological properties of the quasi divisor topology $qD_{A}(E)$ and algebraic properties of $A$-module $E$. 
These include each separation axioms, compactness, connectedness and first and second countability. Also, we characterize some important class of rings/modules such as divisible modules and uniserial modules by means of $qD_{A}(E)$. Furthermore, we introduce quasi second modules and study its algebraic properties to decide when $qD_A(E)$ is a $T_1$-space.
\end{abstract}
\maketitle

\section{Introduction}
One of the popular applications of commutative algebra is to construct new topologies on algebraic structures and study from the point of view of algebra and topology. Oscar Zariski constructed such a prominent example in the literature, called the Zariski topology defined on varieties of a commutative ring. Moreover, one of the essences of the Zariski topology is that it leads to a better understading of a commutative ring in terms of geometry. Furthermore, the reader may consult \cite{Faranak2},\cite{Lu2},\cite{McCasland},\cite{Tekir},\cite{Eda} and \cite{secil} for more information about the Zariski topology with its further discussions and for other such constructions. Recently, the fourth author \cite{YiKo} constructed a new topology which is called the divisor topology over domains. In addition to proving theoretical results, they also provide an alternative way to prove that $\Z$ has infinitely many prime numbers.

In this note, we focus only on integral domains and modules over domains. Let $A$ always denote a domain and $E$ be an $A$-module. The annihilator of $E$ is denoted by $\ann(E)=\{a\in A:aE=0\}$, and
the set of all $a\in A$ such that the homothety $E\overset{a}{\rightarrow}E$
is not surjective, or equivalently $aE\neq E,$ is denoted by $W(E)$
\cite{Yas}. In fact, $W(E)\ $is the dual notion of the zero divisors of $A$ on
$E,\ z(E)=\{a\in A:\exists\ 0\neq e\in E$ such that $ae=0\},\ $and it has been
used in many papers to describe some important algebraic notions such as
second modules, cozero divisor graph of modules and etc. (See, \cite{coz}, \cite{coz2} and \cite{Yas}). It is clear that $\ann(E)\subseteq W(E)\ $and the
equality does not hold in general. For instance, take $A=%
\mathbb{Z}
$ and $E=%
\mathbb{Z}
_{pq}$  where $p\neq q$ are prime numbers, then $ann(E)=pq%
\mathbb{Z}
\subsetneq W(E)=p%
\mathbb{Z}
\cup q%
\mathbb{Z}
.\ $ 
Moreover, recall from \cite{Yas} that an $A$-module $E$ is called a \textit{second submodule} of itself or just a \textit{second module} if $aE=E$ or $aE=0$ for all $a\in A$. In fact, $L\ $is a second submodule if and only if $\ann(L)=W(L)=p$ is a prime ideal of $A$ (See, \cite[Lemma 1.2]{Yas}). In this paper, we construct a new topology for any $A$-module $E$ as follows: Let $W(E)^{\#}=W(E)-\ann(E)$ and define a relation $\sim$ on $W(E)^{\#}$ as for any $a,b\in W(E)^{\#},\ a\sim b$ if and only if $aE=bE$. Here, we note that $\sim$ is an equivalence relation on $W(E)^{\#}$ and denote the set of all equivalence classes $[a]$ of $W(E)^{\#}$ by $\Ec(W(E)^{\#})$, where $[a]$ denotes the equivalence class of $a$ with respect to the relation $\sim$. We first prove our first result in Proposition \ref{Prop1} that the family $\left\{U_{a}\right\}  _{a\in W(E)^{\#}}$ is a basis for the topology on $\Ec(W(E)^{\#})$ which is called the quasi divisor topology of $A$-module $E$ denoted by $qD_A(E)$ where $U_a$ is defined  for every $a\in W(E)^{\#}$ as $U_{a}=\{[b]\in \Ec(W(E)^{\#}):\ aE\subseteq bE\}$.
We deeply study not only various properties of $qD_A(E)$ but also seek to find possible relations between quasi divisor topology and algebraic structures of $A$-module $E$. More explicitly, this paper is organized as follows. In section $2$, we focus on fundamental properties of quasi divisor topology. For example, we identify all isolated points (See, Theorem \ref{isolated}) and also show that $qD_A(E)$ is an Alexandrov space (See, Corollary \ref{smallest}). Moreover, we characterize uniserial multiplication and torsion free divisible modules by properties of $qD_A(E)$ (See, Theorem \ref{nested} and Proposition \ref{noetherian}). We afterward define the closure and interior of a subset of $qD_A(E)$ in Proposition \ref{Closure}. Based on this definitions together with additional conditions, we prove some results regarding dense sets in $qD_A(E)$, see Theorem \ref{Irr(W(E)isdense}, Proposition \ref{opendenseset} and Corollary \ref{EC(S)dense}. In particular, we show that $qD_A(E)$ is a Baire space over a factorial module (See, Corollary \ref{Baire}). Furthermore, we show that quasi divisor topology of $A$-module $E$ where $E$ is a factorial module is homeomorphic to the divisor topology over a domain in Theorem \ref{homeo} together with \ref{homeo2}.

Next, section $3$ is devoted to investigate all separation axioms $T_i$ for $0\le i\le 5$. We show that $qD_A(E)$ is always a $T_0$-space (See, Proposition \ref{T0}), however not always a $T_1$-space as observed in Example \ref{Z_8isnotT_1}. For this reason, we tackle the following question. When $qD_A(E)$ is a $T_1$-space? We positively answered this question in Theorem \ref{T1-starcondition} by introducing quasi second modules. An $A$-module $E$ is said to be a \textit{quasi second module} if $0\neq bE\subseteq aE\neq E$ implies that $bE=aE$ for each $a,b\in A$. In particular, a commutative ring $A$ with unity is called a \textit{quasi second ring} if $A$-module $A$ is a quasi second module. One can see that vector spaces, divisible modules and second modules are examples of quasi second modules. Moreover, we examine the behavior of quasi second modules under module homomorphism, in factor module, in direct product of modules, in localization, and lastly the idealization of module together with related its properties (See, Theorem \ref{Homo qsm}, \ref{localization}, \ref{ApplyInduction}, \ref{qsr}, \ref{idealization} and Propositions \ref{annM semiprime}, \ref{ann(aE)maximal}). Explicitly, we prove in Theorem \ref{qsr} that $A$ is a quasi second ring if and only if either $(A,\mathfrak{m})$ is a local ring with $\mathfrak{m}^2=0$ or $A$ is a direct product of two fields. Moreover, we have results for finitely generated comultiplication modules, see Corollary \ref{ann(aE)maxforComultiplication}, Theorem \ref{SimpleorWeakIdempotent} and \ref{||=2}.

In the last section, we investigate other properties of quasi divisor topology, including Lindelöfness, compactness, connectedness, ultraconnectedness and hyperconnectedness. Moreover, as an application, we prove that if $E$ is a factorial module over an integral domain $A$ which is not a field with the condition $|S|<|A|$ where $S=\{a\in R:aE \text{ is a maximal element of the set } \{bE\}_{b\in W(E)^\#}\}$. Then, $A$ has infinitely many irreducible elements. In particular, we provide an alternative way to show the infinitude of irreducible elements of the localization of $\Z$ at the prime ideal $P$ by means of $qD_A(E)$.
\section{Quasi Divisor Topology}
Throughout the paper, we focus only modules over domains. Let $A$ always denote a domain, and $E$ be an $A$-module. We first recall the definitions from \cite{Lombardi} used in the following proposition. An integral domain $A$ is called \textit{Bézout} if the sum of two principal ideals is principal. Equivalently, finitely generated ideals are principal. Also, a domain is called a \textit{GCD-domain} if gcd exists for any nonzero pair of elements of $A$. Evidently, a Bezout domain is a GCD-domain.
\begin{proposition}\label{Prop1}
Let $E\ $be a module over domain $A.\ $Then the following statements are satisfied.

(i) $[a]\in U_a$ for any $a\in W(E)^\#$.

(ii) $U_a \subseteq U_b$ if and only if $bE\subseteq aE$. In particular, if $a\mid b$ then $U_a\subseteq U_b$.

(iii) $\cup_{a\in W(E)^\#} U_a=\Ec(W(E)^\#)$

(iv) If $[x]\in U_a\cap U_b$ for some $a,b\in W(E)^\#$, then $[x]\in U_x\subseteq U_a\cap U_b$.

(v) If $A$ is a Bezout domain so that $\gcd(a,b)\in W(E)^\#$ for any $a,b\in W(E)^\#$, then $U_{\gcd(a,b)}= U_a\cap U_b$.

(vi) Assume $A$ is a GCD-domain so that $\operatorname{lcm}(a,b)\in W(E)^\#$ for any $a,b\in W(E)^\#$. If $U_c\subseteq U_a$ and $U_c\subseteq U_b$ for some $a,b\in W(E)^\#$ then $U_c\subseteq U_{\operatorname{lcm}(a,b)}$.

(vii) Assume $E$ is a faithful multiplication module over a GCD-domain $A$ such that $\operatorname{lcm(a,b)}\in W(E)^\#$ for any $a,b\in W(E)^\#$. If $U_a\subseteq U_c$ and $U_b\subseteq U_c$ then $U_{\operatorname{lcm(a,b)}}\subseteq U_c$.
\end{proposition}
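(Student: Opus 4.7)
The plan is to translate the statement through part (ii) of this very proposition and then reduce the problem to an ideal-theoretic identity in the domain $A$, which we can then pull back to submodules of $E$ using that $E$ is a faithful multiplication module.

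First I would apply (ii) to restate the hypotheses and conclusion entirely in terms of submodules: the assumption $U_a \subseteq U_c$ is equivalent to $cE \subseteq aE$, the assumption $U_b \subseteq U_c$ is equivalent to $cE \subseteq bE$, and the desired conclusion $U_{\operatorname{lcm}(a,b)} \subseteq U_c$ is equivalent to $cE \subseteq \operatorname{lcm}(a,b)E$. Thus it suffices to prove the submodule identity
\[
aE \cap bE = \operatorname{lcm}(a,b)\, E,
\]
since then $cE \subseteq aE$ and $cE \subseteq bE$ force $cE \subseteq aE \cap bE = \operatorname{lcm}(a,b)E$.

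Next I would establish that identity. The inclusion $\operatorname{lcm}(a,b)E \subseteq aE \cap bE$ is immediate because both $a$ and $b$ divide $\operatorname{lcm}(a,b)$, so part (ii) (or a direct check) gives $\operatorname{lcm}(a,b)E \subseteq aE$ and $\operatorname{lcm}(a,b)E \subseteq bE$. The reverse inclusion is the substantive step. Here I would invoke the two hypotheses: (a) since $A$ is a GCD-domain, the principal ideals satisfy $(a) \cap (b) = (\operatorname{lcm}(a,b))$; (b) since $E$ is a faithful multiplication module, the assignment $I \mapsto IE$ is an inclusion-preserving bijection between the ideals of $A$ and the submodules of $E$, with inverse $N \mapsto (N:E)$. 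Consequently $(IE:E) = I$ for every ideal $I$, and from $aE \cap bE \subseteq aE$, $aE \cap bE \subseteq bE$ we get $(aE\cap bE:E) \subseteq (a) \cap (b) = (\operatorname{lcm}(a,b))$, which after multiplying by $E$ and using the bijection yields $aE \cap bE \subseteq \operatorname{lcm}(a,b)E$.

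The main obstacle I anticipate is the justification that, for a faithful multiplication module, intersections of submodules of the form $IE$ correspond to intersections of the associated ideals, i.e.\ $(I\cap J)E = IE \cap JE$. I would handle this by citing the standard characterization $(IE:E)=I$ for faithful multiplication modules (a consequence of the bijection between ideals and submodules), since this is precisely what makes the lattice argument above work; otherwise the GCD and $\operatorname{lcm}$ manipulations are purely formal. Once that lattice compatibility is recorded, the rest is a short translation through part (ii), and the hypothesis $\operatorname{lcm}(a,b) \in W(E)^{\#}$ is used only to guarantee that $U_{\operatorname{lcm}(a,b)}$ is actually a member of the basis under consideration.
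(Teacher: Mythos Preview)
Your approach coincides with the paper's: translate the hypotheses and conclusion via (ii), reduce to $cE\subseteq aE\cap bE=\operatorname{lcm}(a,b)E$, and invoke the multiplication-module structure for the identity $aE\cap bE=((a)\cap(b))E$. The paper handles your anticipated obstacle by citing \cite[Theorem~1.6]{Smith2} directly for that intersection identity, which is cleaner than routing through the full lattice bijection $I\leftrightarrow IE$ (the equality $(IE:E)=I$ in the generality you state is stronger than what is needed and not guaranteed by faithfulness alone).
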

\begin{proof}
We show only the cases $(v),(vi)$ and $(vii)$ as the proof is similar to \cite[Proposition 1]{YiKo} for the rest. It is clear that $U_{\gcd(a,b)}\subseteq U_a\cap U_b$. Conversely, let $[x]\in U_a\cap U_b$ then $aE\subseteq xE$ and $bE\subseteq xE$. It follows that $aE+bE=((a)+(b))E=(\gcd(a,b))E\subseteq xE$, hence the claim follows.

For the proof of (vi), let $[x]\in U_c$ then $aE,bE\subseteq xE$ by assumption. Thus, $\operatorname{lcm}(a,b)E\subseteq aE,bE\subseteq xE$, as desired. 

For (vii), note that $cE\subseteq aE$ and $cE\subseteq bE$. Moreover, $cE\subseteq aE\cap bE=((a)\cap(b))E$ by \cite[Theorem 1.6]{Smith2}. Therefore, $((a)\cap(b))E=\operatorname{lcm}(a,b)E$. Thus, $U_{\operatorname{lcm}}(a,b) \subseteq U_c$.
\end{proof}
The aforementioned proposition indicates that $\{U_a\}_{a\in W(E)^\#}$ forms a basis for a topology on $\Ec(W(E)^\#)$ which is called the quasi divisor topology of $A$-module $E$, denoted by $qD_A(E)$. In the following example, we provide a counterexample for the opposite direction of Proposition \ref{Prop1} $(ii)$ to show that it is not valid in general.
\begin{example}
Consider the $\Z$-module $\Z_6$. One can see that $2\Z_6=\{\bar0,\bar2,\bar4\}=4\Z_6$ and so $U_4=U_2$ but $4\nmid 2$.
\end{example}
\begin{remark}
We note here that if module $E$ is chosen to be a second module then $W(E)=\ann(E)$ which gives that $W(E)^\#=\Ec(W(E)^\#)=\emptyset$ and so $qD_A (E)$ will be an empty space. Due to this, we do not consider module $E$ to be a second module unless otherwise noted.
\end{remark}
\begin{example}
Consider the $\Z$-module $\Z$. One can determine that $\ann(\Z)=0$ and $W(\Z)=\{a\in \Z:a\neq \pm1\}$. Moreover, $U_a=\{b\in W(\Z)^\#: \text{ $a$ is an integer multiple of $b$}\}$. Hence, $qD_\Z (\Z)$ corresponds to the divisor topology on integers as in \cite{YiKo}.
\end{example}
\begin{lemma}\label{smallest}
Let $E$ be a module over a ring $A$ and $a\in W(E)^\#$. Then, $U_a$ is the smallest open set containing $[a]$.
\end{lemma}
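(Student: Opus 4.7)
The plan is to use the basis property established right after Proposition \ref{Prop1}, together with the containment criterion in Proposition \ref{Prop1}(ii). First, by Proposition \ref{Prop1}(i) we already know $[a] \in U_a$, so $U_a$ is at least one open neighborhood of $[a]$; it remains to show that it is contained in every other one.

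Second, I would take an arbitrary open set $V$ in $qD_A(E)$ with $[a] \in V$. Because $\{U_b\}_{b \in W(E)^\#}$ is a basis (this is precisely the consequence of Proposition \ref{Prop1} noted in the paragraph following its proof), there exists some $b \in W(E)^\#$ such that $[a] \in U_b \subseteq V$. Unpacking the definition of $U_b$, the membership $[a] \in U_b$ is equivalent to $bE \subseteq aE$.

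Third, I would apply Proposition \ref{Prop1}(ii) to the relation $bE \subseteq aE$ to conclude that $U_a \subseteq U_b$, and hence $U_a \subseteq U_b \subseteq V$. Since $V$ was an arbitrary open set containing $[a]$, this establishes that $U_a$ is contained in every open neighborhood of $[a]$, i.e. it is the smallest such. There is no real obstacle here; the lemma is essentially an immediate consequence of items (i) and (ii) of Proposition \ref{Prop1} once one observes that $\{U_b\}$ is a basis, so the argument should be only a few lines.
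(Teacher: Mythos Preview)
Your proof is correct and uses essentially the same ingredients as the paper's: the basis property of $\{U_b\}$ together with Proposition \ref{Prop1}(i)--(ii). The only difference is stylistic---the paper phrases the argument as a contradiction (assuming a strictly smaller open neighborhood $O\subsetneq U_a$ exists, then extracting a basis element $U_x\ni[a]$ inside $O$ and deducing $U_x=U_a$), whereas you give the direct version; the underlying step $[a]\in U_b\Rightarrow bE\subseteq aE\Rightarrow U_a\subseteq U_b$ is identical.
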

\begin{proof}
Suppose $U_a$ is not the smallest open set containing $[a]$, then there exists an open set $O$ such that $[a]\in O=\bigcup_{x\in \wedge} U_x\subseteq U_a$ for some index set $\wedge$. This implies that there is an index $x$ so that $[a]\in U_x$. Hence, $[a]\in U_x\subseteq \bigcup_{x\in \wedge} U_x\subseteq U_a$ which implies that $aE=xE$ and so $U_a=U_x$. Therefore, the open set $O=U_a$, which completes the proof.
\end{proof}
Recall from \cite{Arenas} that a topological space is called \textit{Alexandrov} if an arbitrary intersection of open sets is open, which is equivalent to the fact that each $x\in X$ has a minimal neighborhood. The following corollary is a direct consequence of Lemma \ref{smallest}.
\begin{corollary}
$qD_A(E)$ is an Alexandrov space.
\end{corollary}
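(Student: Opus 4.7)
My plan is to invoke the equivalent characterization of Alexandrov spaces stated in the paragraph preceding the corollary, namely that a topological space is Alexandrov if and only if every point admits a minimal neighborhood. Once this characterization is accepted, the proof becomes a one-line application of Lemma \ref{smallest}: for each point $[a]\in \Ec(W(E)^{\#})$, the basis element $U_a$ is by that lemma the smallest open set containing $[a]$, which is precisely a minimal neighborhood.

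If instead I wanted to verify the defining property directly (that arbitrary intersections of open sets are open), I would proceed as follows. Let $\{O_i\}_{i\in I}$ be an arbitrary family of open sets in $qD_A(E)$ and set $O=\bigcap_{i\in I} O_i$. For any $[a]\in O$, the point $[a]$ belongs to each $O_i$, and Lemma \ref{smallest} forces $U_a\subseteq O_i$ for every $i\in I$; therefore $U_a\subseteq O$. This shows $O=\bigcup_{[a]\in O} U_a$, a union of basic open sets, so $O$ is open.

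There is essentially no obstacle, since the work has already been done in Lemma \ref{smallest}; the corollary is a bookkeeping statement that repackages that lemma in the standard topological vocabulary of Alexandrov spaces. The only minor point worth mentioning is that the family $\{U_a\}_{a\in W(E)^{\#}}$ has been established in Proposition \ref{Prop1} to be a basis, which legitimizes writing an arbitrary open set as a union of the $U_a$ in the verification above.
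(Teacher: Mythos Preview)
Your proposal is correct and matches the paper's approach exactly: the paper states the corollary as ``a direct consequence of Lemma \ref{smallest}'' without further elaboration, and you have simply supplied the one-line justification (minimal neighborhoods via $U_a$) together with an optional direct verification.
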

In the next theorem, we describe all isolated points of $qD_A(E)$. Recall that an element $x\in X$ is called an \textit{isolated point} if the singleton $\{x\}$ forms an open set, \cite{Munkres}.
\begin{theorem}\label{isolated} Let $E$ be an $A$-module. Then the following statements are equivalent:
\begin{enumerate}
\item[$i)$] $[a]\in \Ec(W(E)^\#)$ is an isolated point.
\item[$ii)$] $U_a=\{[a]\}$ for all $a\in W(E)^\#$.
\item[$iii)$] $aE$ is a maximal element of the set $\{bE:b\in W(E)^\#\}$ for all $a\in W(E)^\#$.
\end{enumerate}
\end{theorem}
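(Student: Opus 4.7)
The plan is to prove the three conditions are equivalent pointwise (for a fixed $a \in W(E)^\#$) in a cyclic fashion $(i)\Rightarrow(ii)\Rightarrow(iii)\Rightarrow(i)$. The key tool throughout will be Lemma \ref{smallest}, which states that $U_a$ is the smallest open set containing $[a]$, together with the trivial observation (Proposition \ref{Prop1}(i)) that $[a]\in U_a$.

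For $(i)\Rightarrow(ii)$: assume $[a]$ is isolated, so $\{[a]\}$ is open. Since $\{[a]\}$ is an open set containing $[a]$ and $U_a$ is the smallest such set, I would conclude $U_a\subseteq\{[a]\}$, and then combining with $[a]\in U_a$ gives equality.

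For $(ii)\Rightarrow(iii)$: assume $U_a=\{[a]\}$. To show $aE$ is maximal in $\{bE:b\in W(E)^\#\}$, I would take any $b\in W(E)^\#$ with $aE\subseteq bE$. By definition of the basic open set, $[b]\in U_a=\{[a]\}$, so $[b]=[a]$, meaning $bE=aE$. Hence no proper enlargement exists and $aE$ is maximal.

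For $(iii)\Rightarrow(i)$: assume $aE$ is maximal. For any $[b]\in U_a$ we have $aE\subseteq bE$, and maximality forces $aE=bE$, i.e.\ $[b]=[a]$. Thus $U_a=\{[a]\}$, which is open by the definition of the basis, proving $[a]$ is isolated. The argument is essentially formal once Lemma \ref{smallest} is in hand; there is no real obstacle, though care should be taken with how the universal quantifiers in $(ii)$ and $(iii)$ match the single point $[a]$ in $(i)$, since each statement is to be interpreted pointwise at the chosen representative $a$.
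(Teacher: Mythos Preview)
Your proof is correct and follows essentially the same approach as the paper, which invokes Lemma \ref{smallest} for $(i)\Leftrightarrow(ii)$ and the definition of maximality for $(ii)\Leftrightarrow(iii)$; you simply spell out the details and organize them as a cycle rather than two biconditionals. Your closing remark about reading the quantifiers pointwise is apt, since the paper's phrasing ``for all $a\in W(E)^\#$'' in $(ii)$ and $(iii)$ is indeed meant to match the fixed $[a]$ in $(i)$.
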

\begin{proof}
$i)\Leftrightarrow ii):$ This follows from Lemma \ref{smallest}.

$ii)\Leftrightarrow iii):$ This follows from the maximality of $aE$ in the set $\{bE\}_{b\in W(E)^\#}$. 
\end{proof}
We now recall the necessary definitions and notations that will be used in the following discussion which can be found in \cite{Lu}. Let $E$ be an $A$-module. We first start with the notations $|_A$ and $|_E$ to denote division in $A$ and $E$, respectively. An element $m\in E$ is called \textit{irreducible} in $E$ if it has no proper factor in $E$. Moreover, an element $a\in A$ is called prime to $E$ if $a$ is irreducible in $A$ and if $a\mid bm$ then either $a\mid_A b$ or $a\mid_E m$. A torsion free $A$-module $E$ is called a \textit{factorial module} if every nonzero element $m\in E$ has a unique irreducible factorization up to order and associates such that $m=a_1\dots a_nm'$ where $a_1,\dots,a_n$ are irreducible in $A$ and $m'$ is irreducible in $E$. We also note here that irreducible elements of $A$ are prime to $E$ and a ring $A$ is necessarily a unique factorization domain if $E$ is a factorial $A$-module. 

\begin{proposition}\label{primetoMandmaximality}
Let $E$ be a torsion free $A$-module. If $A$ has an element which is prime to $E$ then $aE$ is a maximal element of the set $\{bE:b\in W(E)^\#\}$.
\end{proposition}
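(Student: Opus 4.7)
The plan is to show that any $a \in W(E)^{\#}$ which is prime to $E$ is such that $aE$ admits no proper enlargement inside $\{bE : b \in W(E)^{\#}\}$. I first fix $b \in W(E)^{\#}$ with $aE \subseteq bE$ and pursue the goal $aE = bE$. Elementwise, the inclusion means that for every $x \in E$ there exists $y \in E$ (depending on $x$) with $ax = by$. Since $by = ax \in aE$, i.e.\ $a \mid by$ in $E$, the primality hypothesis on $a$ yields the dichotomy $a \mid_A b$ or $a \mid_E y$.

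If the first alternative holds, then $b = ac$ for some $c \in A$, so $bE \subseteq aE$, which combined with the hypothesis $aE\subseteq bE$ gives $aE = bE$. Otherwise $a \nmid_A b$, and then the second alternative must hold for every choice of $x$: we obtain $y = az$ for some $z \in E$, hence $ax = abz$. Using that $a \neq 0$ (because $a \in W(E)^{\#}$ means $a \notin \ann(E)$, so in particular $a$ is nonzero in $A$) and that $E$ is torsion-free, I cancel $a$ to conclude $x = bz \in bE$. Since $x$ was arbitrary this forces $E \subseteq bE$, contradicting $b \in W(E)^{\#}$. Therefore the first alternative is forced, and $aE$ is maximal in $\{bE : b \in W(E)^{\#}\}$.

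The main obstacle, and really the only delicate point, is the quantifier interaction in the second alternative: the auxiliary element $y$ depends on $x$, whereas the statement ``$a \mid_A b$'' is a single assertion about $a$ and $b$ alone. The resolution is to treat the two cases globally. Either $a \mid_A b$ holds and we are immediately done, or it fails once and for all, in which case the ``$a \mid_E y$'' branch is available uniformly in $x$, and the torsion-freeness of $E$ then upgrades pointwise divisibility into the global inclusion $E \subseteq bE$ needed to reach the contradiction.
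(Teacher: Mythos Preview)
Your proof is correct and follows essentially the same route as the paper's: assume $aE\subseteq bE$, write $ax=by$, apply primality of $a$ to split into $a\mid_A b$ or $a\mid_E y$, and dispatch each case. Your version is in fact more careful than the paper's on the one point that matters, namely the quantifier interaction you flag in your last paragraph (the paper writes the dichotomy for a single $m$ and then asserts ``the latter implies $bE=E$'' without making explicit that this requires the second alternative for \emph{every} $m$), and you also spell out the use of torsion-freeness in the cancellation $ax=abz\Rightarrow x=bz$, which the paper leaves implicit.
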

\begin{proof}
Suppose $a\in A$ is an element which is prime to $E$ and $aE\subsetneq bE$ for some $b\in W(E)^\#$. It follows that for any $m\in E$, there is an $m'\in E$ such that $am=bm'$. Since $a\in A$ is prime to $E$, we have $a\mid_A b$ or $a\mid_E m'$. However, the first case implies $aE=bE$ and the latter implies $bE=E$, which both are contradictions. Hence, $aE$ is a maximal element of the set $\{bE\}_{b\in W(E)^\#}$. 
\end{proof}
\begin{corollary}
Let $E$ be a multiplication module over a principal ideal domain $A$. Then, $[a]\in \Ec(W(E)^\#)$ is an isolated point if and only if $aE$ is a maximal submodule of $E$.
\end{corollary}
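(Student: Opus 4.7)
The plan is to reduce the corollary to Theorem \ref{isolated}, which tells us that $[a]$ is isolated precisely when $aE$ is a maximal element of the family $\mathcal{F}=\{bE:b\in W(E)^{\#}\}$. So it suffices to show that, under the multiplication and PID hypotheses, maximality of $aE$ inside $\mathcal{F}$ is the same as maximality of $aE$ as a submodule of $E$.

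The ``if'' direction needs essentially no hypothesis beyond the definitions. Assume $aE$ is a maximal submodule of $E$, and take any $b\in W(E)^{\#}$ with $aE\subseteq bE$. Because $b\in W(E)$ we have $bE\neq E$, so maximality of $aE$ as a submodule forces $aE=bE$. Hence $aE$ is maximal in $\mathcal{F}$, and Theorem \ref{isolated} gives that $[a]$ is isolated.

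For the ``only if'' direction, assume $[a]$ is isolated, so $aE$ is maximal in $\mathcal{F}$. Let $N$ be a submodule of $E$ with $aE\subseteq N\subsetneq E$; our task is to show $N=aE$. Since $E$ is a multiplication module, $N=IE$ for some ideal $I$ of $A$, and since $A$ is a PID we may write $I=(c)$, so $N=cE$. The condition $N\neq E$ gives $cE\neq E$, i.e.\ $c\in W(E)$. On the other hand, $N\supseteq aE\neq 0$ (the nonvanishing of $aE$ coming from $a\in W(E)^{\#}\subseteq A\setminus\ann(E)$), so $cE\neq 0$ and hence $c\notin\ann(E)$. Thus $c\in W(E)^{\#}$, and $aE\subseteq cE$ places $cE$ in $\mathcal{F}$ above $aE$. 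Maximality of $aE$ in $\mathcal{F}$ now forces $aE=cE=N$, as required.

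The only subtlety I anticipate is the verification that the chosen $c$ actually belongs to $W(E)^{\#}$ (rather than merely $W(E)$): this is what ties the step to the hypothesis $a\in W(E)^{\#}$, and without the chain $aE\subseteq cE$ giving $cE\neq 0$ the argument would collapse. The structural ingredients (multiplication module plus PID) are used exactly once, namely to realize an arbitrary intermediate submodule as $cE$, so these hypotheses are precisely what allow one to pass from abstract submodules to elements of $W(E)^{\#}$.
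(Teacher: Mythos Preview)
Your proof is correct and follows essentially the same approach as the paper: both directions are reduced to Theorem \ref{isolated}, and in the ``only if'' direction the multiplication and PID hypotheses are used exactly to write an arbitrary proper submodule $N$ containing $aE$ as $cE$ for some $c$. Your argument is in fact slightly more careful than the paper's, since you explicitly verify that $c\in W(E)^{\#}$ (using $aE\neq 0$ to get $c\notin\ann(E)$), a point the paper's proof leaves implicit.
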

\begin{proof}
One direction of the statement is straightforward by Theorem \ref{isolated}. Conversely, each submodule of $E$ is of the form $IE=(a)E=aE$ for some ideal $I$ of $A$ since $E$ is a multiplication $A$-module. Now, if there is a proper submodule $N$ such that $aE\subseteq N=(b)E=bE$ for some $b\in A$, then $[b]\in U_a$ which implies $[b]=[a]$. Hence, $aE$ is a maximal submodule of $E$.
\end{proof}
A natural question can be posed that is there a connection between the algebraic properties of module $E$ and the topological properties of $qD_A(E)$. We now briefly discuss the following results closely related to this problem.

A topological space $X$ is called \textit{nested} if all open sets of $X$ are linearly ordered by inclusion \cite{Richmond}. Moreover, a topology $\tau$ on $X$ with a basis $B$ is called nested if and only if for every $B_1,B_2 \in B$ either $B_1\subseteq B_2$ or $B_2\subseteq B_1$, \cite{YiKo}. In the following theorem, we give a characterization of nested space for $qD_A(E)$ using uniserial modules. Recall that an $A$-module $E$ is called an \textit{uniserial module} if each submodule of $E$ is comparable with respect to inclusion, \cite{FacSal}.
\begin{theorem}\label{nested}
Let $E$ be a multiplication $A$-module. Then, $E$ is an uniserial module if and only if $qD_A(E)$ is a nested topology.
\end{theorem}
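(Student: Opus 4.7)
The plan is to use Proposition \ref{Prop1}(ii) to translate the topological statement into an algebraic one about the chain $\{aE\}$, and then exploit the multiplication hypothesis to pass from this family of ``principal'' submodules to all submodules.

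For the forward direction, assume $E$ is uniserial. Then for any $a,b\in W(E)^{\#}$ the submodules $aE$ and $bE$ are comparable, so by Proposition \ref{Prop1}(ii) the basic open sets $U_{a}$ and $U_{b}$ are comparable. Since comparability of an arbitrary open set reduces to comparability of basic open sets (as noted in the characterization of nested topologies given before the theorem), $qD_{A}(E)$ is nested.

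For the converse, suppose $qD_{A}(E)$ is nested. I would first observe that the family $\{aE : a\in A\}$ is totally ordered by inclusion: for $a,b\in W(E)^{\#}$, comparability of $U_{a}$ and $U_{b}$ gives comparability of $aE$ and $bE$ by Proposition \ref{Prop1}(ii); if $a\in \ann(E)$ then $aE=0$ sits below everything, and if $a\notin W(E)$ then $aE=E$ sits above everything. Next, using that $E$ is multiplication, I would write every submodule of $E$ as $IE$ for some ideal $I$, and moreover $IE=\sum_{a\in I}aE$. The main step is then to show that for any two ideals $I_{1},I_{2}$ of $A$, the submodules $I_{1}E$ and $I_{2}E$ are comparable. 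This reduces to the following lemma-level fact, which I would prove inline: if $\mathcal{C}$ is a chain of submodules of $E$ and $S_{1}=\sum_{\alpha}X_{\alpha}$, $S_{2}=\sum_{\beta}Y_{\beta}$ are sums of two subfamilies of $\mathcal{C}$, then $S_{1}\subseteq S_{2}$ or $S_{2}\subseteq S_{1}$. The argument is a dichotomy: either every $X_{\alpha}$ is contained in some $Y_{\beta}$, giving $S_{1}\subseteq S_{2}$, or some $X_{\alpha_{0}}$ fails to be contained in any $Y_{\beta}$, which by the chain property forces every $Y_{\beta}\subseteq X_{\alpha_{0}}\subseteq S_{1}$ and hence $S_{2}\subseteq S_{1}$.

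The main obstacle is precisely this passage from the principal-type submodules $aE$ (which are directly controlled by the basis $\{U_{a}\}$) to arbitrary submodules $IE$: the nested condition on the topology only speaks to the former, so the multiplication hypothesis plus the chain-sum dichotomy above must do the work of bridging the gap. Once this is in place, the conclusion that all submodules of $E$ are comparable, i.e.\ that $E$ is uniserial, is immediate.
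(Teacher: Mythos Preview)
Your proposal is correct and follows essentially the same line as the paper's proof: both directions use Proposition \ref{Prop1}(ii), and in the converse the paper also writes $N=(N:E)E$, $K=(K:E)E$ and compares them via the chain $\{aE\}$ by picking some $a\in(N:E)$ with $aE\not\subseteq bE$ for every $b\in(K:E)$, which is exactly the second branch of your chain-sum dichotomy made concrete. Your explicit handling of the boundary cases $a\in\ann(E)$ and $a\notin W(E)$ is in fact slightly more careful than the paper's presentation.
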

\begin{proof}
Suppose $E$ is an uniserial module then either $aE\subseteq bE$ or $bE\subseteq aE$. Hence, $U_b\subseteq U_a$ or $U_a\subseteq U_b$. Conversely, let $N$ and $K$ be two submodules of $E$ such that $N\not\subseteq K$. Since $E$ is a multiplication module, we have $N=(N:E)E$ and $K=(K:E)E$. It follows that there is $n\in N\setminus K$ such that $n=am$ for some $a\in (N:E)$ and $a\not\in (K:E)$. That means $aE\not\subseteq bE$ for all $b\in(K:E)$. Moreover, since $qD_A(E)$ is a nested topology, we must have $bE\subseteq aE$ for all $b\in (K:E)$. Therefore, $K\subseteq aE\subseteq N$, which completes the proof.
\end{proof}
Recall from \cite{Hartshorne} that a topological space $X$ is called \textit{Noetherian} if any chain of closed subsets of $X$ satisfies the descending chain condition. This is equivalent to the fact that there is no strictly increasing infinite chain of open subsets of $X$.
\begin{proposition}\label{noetherian}
Let $E$ be a torsion free $A$-module. Then, $E$ is a divisible $A$-module if and only if $qD_A(E)$ is a Noetherian space.
\end{proposition}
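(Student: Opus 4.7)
The plan is to prove both directions by working with the powers $a, a^2, a^3, \dots$ of a witness $a$ to non-divisibility, so the argument reduces to one careful cancellation using torsion-freeness.

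For the forward direction, assume $E$ is divisible (and nonzero; the zero case is immediate since everything in sight becomes empty). Then by definition $aE = E$ for every nonzero $a \in A$, so $W(E) \subseteq \{0\}$. On the other hand torsion-freeness forces $\ann(E) = \{0\}$ whenever $E \neq 0$, because any nonzero annihilator would kill every element of $E$. Combining these, $W(E)^{\#} = W(E)\setminus \ann(E) = \emptyset$, so $\Ec(W(E)^{\#}) = \emptyset$ and $qD_A(E)$ is trivially Noetherian.

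For the reverse direction, I argue by contrapositive: suppose $E$ is not divisible and produce an infinite strictly ascending chain of open subsets of $qD_A(E)$. Choose $a \in A$ with $a \neq 0$ and $aE \neq E$. Then each positive power $a^n$ is nonzero (since $A$ is a domain), does not annihilate $E$ (since $E$ is torsion-free), and satisfies $a^nE \subseteq aE \subsetneq E$, so $a^n \in W(E)^{\#}$ for every $n \geq 1$. The descending chain $E \supsetneq aE \supseteq a^2 E \supseteq \cdots$ of submodules converts, via Proposition~\ref{Prop1}(ii), into an ascending chain
\begin{equation*}
U_a \subseteq U_{a^2} \subseteq U_{a^3} \subseteq \cdots
\end{equation*}
of basic open subsets.

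The heart of the argument, and the only step that is not purely bookkeeping, is to upgrade each containment above to a strict one. For this I claim $a^{n+1}E \subsetneq a^nE$ for every $n \geq 1$. Indeed, if $a^{n+1}E = a^nE$, then any $e \in E$ can be written as $a^n e = a^{n+1} e'$ for some $e' \in E$, giving $a^n(e - ae') = 0$; torsion-freeness together with $a^n \neq 0$ yields $e = ae'$, so $aE = E$, contradicting the choice of $a$. Hence $[a^{n+1}] \notin U_{a^n}$ (otherwise $a^nE \subseteq a^{n+1}E$ and equality would hold), so $U_{a^n} \subsetneq U_{a^{n+1}}$ for all $n$. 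This contradicts the Noetherian hypothesis and finishes the proof. The main subtlety to flag is precisely the cancellation step: without torsion-freeness the chain of $a^nE$ can stabilize, which is exactly why the hypothesis is placed in the statement.
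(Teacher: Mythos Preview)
Your proof is correct and follows essentially the same route as the paper's: both arguments hinge on the chain $U_a \subseteq U_{a^2} \subseteq \cdots$ built from powers of a witness $a$ to non-divisibility, and both use torsion-freeness in exactly the same cancellation step ($a^kE = a^{k+1}E \Rightarrow aE = E$). The only cosmetic difference is that the paper phrases the nontrivial direction directly (assume Noetherian, take any nonzero $a$, argue the chain stabilizes and hence $aE=E$), whereas you run the contrapositive and show the chain is strictly increasing; you are also more explicit in checking $a^n \in W(E)^{\#}$, which the paper leaves implicit.
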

\begin{proof}
Let $a$ be a nonzero element of $A$ and consider the following infinite chain of open sets $$U_a\subseteq U_{a^2}\subseteq \dots \subseteq U_{a^n}\subseteq U_{a^{n+1}}\subseteq \dots$$ Since $qD_A(E)$ is a Noetherian space, the chain must eventually stop i.e., there exists $k\in \mathbb N$ such that $U_{a^k}=U_{a^{k+1}}=\ldots$; so $a^kE=a^{k+1}E$. Hence, for any $m\in E$ there is an $m'\in E$ such that $a^{k+1}m'=a^km$. It follows that $m=am'$ and so $E=aE$, which completes the proof. The other direction is straightforward.
\end{proof}
In the following proposition, we define the closure and interior of a given subset of $qD_A(E)$.
\begin{proposition}\label{Closure}
Let $E$ be an $A$-module, and $N\subseteq qD_A(E)$. Then, $$\overline{N}=\{[a]\in \Ec(W(E)^\#): U_b\subseteq U_a \text{ for }[b]\in N\} \text{ and } N^\circ=\{[a]\in N: U_a\subseteq N\}.$$
In particular, $\overline{\{[a]\}}=\{[b]\in \Ec(W(E)^\#): bE\subseteq aE\}$ for $[a]\in \Ec(W(E)^\#)$.
\end{proposition}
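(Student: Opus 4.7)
The plan is to reduce both statements to the observation, already established in Lemma \ref{smallest}, that $U_a$ is the \emph{smallest} open set containing $[a]$, together with Proposition \ref{Prop1}(ii), which translates $U_b\subseteq U_a$ into the module-theoretic condition $aE\subseteq bE$. Because every point of $qD_A(E)$ has a minimal neighborhood, membership in closures and interiors can be tested on this single neighborhood rather than on arbitrary open sets, which collapses the arguments to a one-line unfolding of definitions.

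For the closure, I would argue as follows. By Lemma \ref{smallest}, a point $[a]$ lies in $\overline{N}$ if and only if its minimal neighborhood $U_a$ meets $N$, i.e., there exists $[b]\in N$ with $[b]\in U_a$. By definition of $U_a$, this is equivalent to $aE\subseteq bE$, and by Proposition \ref{Prop1}(ii) this is in turn equivalent to $U_b\subseteq U_a$. Hence
\[
\overline{N}=\{[a]\in \Ec(W(E)^\#):\ U_b\subseteq U_a\ \text{for some}\ [b]\in N\},
\]
which is the asserted description.

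For the interior, I would use the same minimality. If $[a]\in N^\circ$, there is an open set $O$ with $[a]\in O\subseteq N$, and since $U_a$ is the smallest open set containing $[a]$ we get $[a]\in U_a\subseteq O\subseteq N$; in particular $[a]\in N$ and $U_a\subseteq N$. Conversely, if $[a]\in N$ and $U_a\subseteq N$, then $U_a$ itself is an open neighborhood of $[a]$ contained in $N$, so $[a]\in N^\circ$. This gives $N^\circ=\{[a]\in N: U_a\subseteq N\}$ as claimed.

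For the particular statement about $\overline{\{[a]\}}$, I would simply specialize the closure formula to $N=\{[a]\}$: a class $[b]$ lies in $\overline{\{[a]\}}$ iff $U_a\subseteq U_b$, which by Proposition \ref{Prop1}(ii) is equivalent to $bE\subseteq aE$. I do not anticipate any real obstacle here; the only subtle point is making sure to invoke Proposition \ref{Prop1}(ii) in the correct direction (inclusion of basic opens reverses inclusion of the submodules $aE$), so I would write that conversion out explicitly to avoid any confusion.
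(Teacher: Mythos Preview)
Your proposal is correct and follows essentially the same approach as the paper's proof: both reduce the closure and interior descriptions to the minimality of $U_a$ (Lemma \ref{smallest}) and the equivalence $[b]\in U_a \Leftrightarrow U_b\subseteq U_a$. Your write-up is slightly more explicit than the paper's in invoking Proposition \ref{Prop1}(ii) for the translation $U_b\subseteq U_a \Leftrightarrow aE\subseteq bE$, and in spelling out the specialization to $\overline{\{[a]\}}$, but the underlying argument is the same.
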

\begin{proof}
Let $[a]\in \overline{N}$ for some $a\in W(E)^\#$ then any open set containing $[a]$ intersects with $N$. Thus, there exists $[b]\in N$ such that $U_b\subseteq U_a$. Conversely, suppose $U_b\subseteq U_a$ for some $[b]\in N$, then $[b]\in U_a$. It follows that $[b]\in N\cap U_a$ and so $U_a\cap N\neq \emptyset$. This implies that $[a]\in \overline{N}$, which completes the first part of the proof.

On the other hand, let $[b]\in N^\circ$ then there is an open set $U\subseteq N$ such that $[b]\in U$. Thus, $[b]\in U_b\subseteq U\subseteq N$ by Lemma \ref{smallest}. Conversely, let $[b]\in \{[a]\in N: U_a\subseteq N\}$ then $U_b\subseteq N$ which immediately implies $U_b\subseteq N^\circ$, which concludes the proof.
\end{proof}
\begin{lemma}\label{unionofclosure}
Let $\{N_i\}_{i\in I}$ be a family of subsets of $qD_A(E)$. Then, $$\overline{\bigcup_{i\in I} N_i}=\bigcup_{i\in I} \overline{N_i}.$$
\end{lemma}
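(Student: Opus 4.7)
The plan is to exploit the explicit formula for closure established in Proposition \ref{Closure}, which reduces the lemma to an elementary set-theoretic juggling of quantifiers. The inclusion $\bigcup_{i\in I}\overline{N_i}\subseteq \overline{\bigcup_{i\in I}N_i}$ is automatic from basic topology (each $N_j\subseteq \bigcup_i N_i$ forces $\overline{N_j}\subseteq\overline{\bigcup_i N_i}$), so the content lies entirely in the reverse inclusion.

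For the reverse inclusion, I would take $[a]\in \overline{\bigcup_{i\in I}N_i}$ and unpack the characterization from Proposition \ref{Closure}: there is some $[b]\in \bigcup_{i\in I}N_i$ with $U_b\subseteq U_a$. Choose an index $j\in I$ with $[b]\in N_j$; then by the same characterization, $[a]\in \overline{N_j}\subseteq \bigcup_{i\in I}\overline{N_i}$. This is essentially swapping a $\exists[b]\exists i$ with $\exists i\exists[b]$, which is harmless.

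As a conceptual sanity check, I would note that the lemma really is a reflection of the fact that $qD_A(E)$ is an Alexandrov space (Corollary after Lemma \ref{smallest}): in Alexandrov spaces, arbitrary intersections of open sets are open, hence arbitrary unions of closed sets are closed; so $\bigcup_{i\in I}\overline{N_i}$ is already a closed set containing $\bigcup_{i\in I}N_i$ and therefore contains its closure. I would probably just present the direct quantifier-swap argument since it is self-contained, but mention this topological viewpoint briefly as motivation.

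There is no real obstacle here; the only thing to watch for is that the description $\overline{N}=\{[a]:U_b\subseteq U_a\text{ for some }[b]\in N\}$ from Proposition \ref{Closure} is an ``exists'' statement over $N$, which is exactly what makes the quantifier swap with a union work. Had the closure description involved a universal quantifier over $N$, the identity would generally fail; fortunately that is not the case.
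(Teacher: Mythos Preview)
Your proposal is correct and follows essentially the same route as the paper: both invoke the explicit closure description from Proposition \ref{Closure} and reduce the nontrivial inclusion $\overline{\bigcup_i N_i}\subseteq \bigcup_i\overline{N_i}$ to the elementary quantifier swap $\exists [b]\,\exists i \leftrightarrow \exists i\,\exists [b]$. Your Alexandrov-space remark is a pleasant conceptual addition that the paper does not make explicit.
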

\begin{proof}
Let $[a]\in \overline{\bigcup_{i\in I} N_i}$ then $ [a]\in N_{i_0}$ for some $i_0\in I$ by Proposition \ref{Closure}. It follows that $U_b\subseteq U_a$ for some $[b]\in \cup_{i\in I} N_{i}$. Thus, $[a]\in \bigcup_{i\in I} \overline{N_i}$. The converse part can be shown similarly by following up the reverse directions.
\end{proof}
\begin{theorem}\label{Irr(W(E)isdense}
Let $E$ be a module over a factorization domain $A$ with the property that $\Irr(A)\subseteq W(E)^\#$ then $\Ec(\Irr(W(E)^\#))$ is dense in $qD_A(E)$.
\end{theorem}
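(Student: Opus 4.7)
The plan is to verify density directly via the closure characterization in Proposition \ref{Closure}: a point $[a]$ lies in $\overline{\Ec(\Irr(W(E)^\#))}$ precisely when there exists some irreducible $p$ with $[p]\in \Ec(\Irr(W(E)^\#))$ such that $U_p \subseteq U_a$, which by Proposition \ref{Prop1}(ii) is equivalent to $aE \subseteq pE$. So the task reduces to showing that every $[a] \in \Ec(W(E)^\#)$ admits an irreducible $p\in W(E)^\#$ with $aE \subseteq pE$; that would force $\overline{\Ec(\Irr(W(E)^\#))} = \Ec(W(E)^\#)$, which is the whole space $qD_A(E)$.

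To produce such a $p$ for a given $a \in W(E)^\#$, I would first observe that $a$ is a nonzero nonunit of $A$: nonzero because $a \notin \ann(E)$, and a nonunit because any unit $u$ satisfies $uE = E$ and so cannot lie in $W(E)$. Since $A$ is a factorization domain, we may write $a = p_1 p_2 \cdots p_n$ with each $p_i \in \Irr(A)$. The standing hypothesis $\Irr(A) \subseteq W(E)^\#$ then ensures that $[p_1] \in \Ec(\Irr(W(E)^\#))$, so $p_1$ is a legitimate candidate for $p$.

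The final step is the containment
\[
aE \;=\; p_1\bigl(p_2 \cdots p_n E\bigr) \;\subseteq\; p_1 E,
\]
which gives $U_{p_1} \subseteq U_a$ by Proposition \ref{Prop1}(ii) and hence $[a] \in \overline{\Ec(\Irr(W(E)^\#))}$ by Proposition \ref{Closure}. As $[a]$ was arbitrary, the closure exhausts $qD_A(E)$, proving density.

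The only real subtlety is making sure the factorization hypothesis genuinely applies to every $a\in W(E)^\#$, i.e., verifying that such $a$ are nonzero nonunits so that a decomposition into irreducibles exists at all. Beyond this small bookkeeping step, the argument is essentially a direct unwinding of the definitions through the closure formula of Proposition \ref{Closure} combined with the divisibility-reversal in Proposition \ref{Prop1}(ii), so I do not expect any substantive obstacle.
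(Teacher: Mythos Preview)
Your proof is correct and follows essentially the same route as the paper's: factor $a\in W(E)^\#$ as a product of irreducibles $r_1\cdots r_n$, observe $aE\subseteq r_1E$, and conclude via the closure description that $[a]\in\overline{\{[r_1]\}}\subseteq\overline{\Ec(\Irr(W(E)^\#))}$. Your write-up is in fact more careful than the paper's, since you explicitly verify that $a$ is a nonzero nonunit (so the factorization hypothesis applies) and you spell out the use of Propositions \ref{Prop1}(ii) and \ref{Closure}.
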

\begin{proof}
We only need to verify that $\overline{\Ec(\Irr(W(E)^\#))}\supseteq\Ec(W(E)^\#)$. Let $a\in W(E)^\#$ then the irreducible factorization of $a=r_1\dots r_n$ yields that $aE\subseteq r_1 E$ where $r_1,\dots,r_n$ are irreducible in $A$. Thus, $[a]\in \overline{\{[r_1]\}}\in \overline{\Ec(\Irr(W(E)^\#))}$. 
\end{proof}
Before giving the following proposition and corollary, we set $S=\{a\in R:aE \text{ is a maximal element of the set } \{bE\}_{b\in W(E)^\#}\}$
\begin{proposition}\label{opendenseset}
Let $E$ be an $A$-module. Then each open dense set in $qD_A(E)$ contains $\Ec(S)$.
\end{proposition}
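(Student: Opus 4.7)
The plan is to reduce the statement to the basic fact that every isolated point of a topological space must lie in every dense subset. The key observation is that points $[a] \in \Ec(S)$ are precisely the isolated points of $qD_A(E)$ identified in Theorem \ref{isolated}, so once this identification is made the conclusion is immediate.

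First I would fix an arbitrary $[a] \in \Ec(S)$ and an open dense subset $U \subseteq qD_A(E)$, and recall from the definition of $S$ that $aE$ is a maximal element of $\{bE\}_{b \in W(E)^\#}$. By the equivalence (iii)$\Leftrightarrow$(ii) in Theorem \ref{isolated}, this maximality is equivalent to $U_a = \{[a]\}$. In particular, $\{[a]\} = U_a$ is a nonempty open set in $qD_A(E)$.

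Next I would invoke the denseness of $U$: since $U$ is dense, it must intersect every nonempty open subset of $qD_A(E)$. Applying this to the open set $U_a = \{[a]\}$ gives $U \cap \{[a]\} \neq \emptyset$, which forces $[a] \in U$. Since $[a] \in \Ec(S)$ was arbitrary, we conclude $\Ec(S) \subseteq U$.

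There is really no main obstacle here; the content of the proposition is entirely packaged in Theorem \ref{isolated}, which tells us that elements of $\Ec(S)$ correspond to isolated points of the space. Once this is noted, the argument is a one-line application of the definition of a dense set, and the only care needed is to write $U_a = \{[a]\}$ cleanly as the witnessing open neighborhood.
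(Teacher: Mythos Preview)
Your proposal is correct and follows essentially the same route as the paper: both arguments take $[a]\in\Ec(S)$, use maximality of $aE$ to conclude $U_a=\{[a]\}$, and then intersect this singleton open set with the dense set to force $[a]\in U$. The only cosmetic difference is that you cite Theorem \ref{isolated} explicitly for the identification $U_a=\{[a]\}$, whereas the paper asserts it directly; neither argument actually uses the openness hypothesis on the dense set.
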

\begin{proof}
Let $O$ be an open dense set in $qD_A(E)$ then $\overline{O}=\Ec(W(E)^\#)$. Let $a\in S$ then $[a]\in \Ec(W(E)^\#)=\overline{O}$. It follows that any open set containing $[a]$, particularly $U_a$, intersects with $O$. In this case, $U_a=\{[a]\}$ as $a\in S$ which implies that $[a]\in O$. Therefore, $\Ec(S)\subseteq O$.
\end{proof}
\begin{corollary}\label{EC(S)dense}
Let $E$ be a factorial $A$-module then $\Ec(S)$ is dense in $qD_A(E)$.
\end{corollary}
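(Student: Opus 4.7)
The plan is to deduce this as a consequence of Theorem \ref{Irr(W(E)isdense}, by showing that the dense subset produced there, namely $\Ec(\Irr(W(E)^\#))$, is contained in $\Ec(S)$; density of the larger set follows at once since any superset of a dense set is dense.

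First I would verify the hypothesis of Theorem \ref{Irr(W(E)isdense} for a factorial $A$-module $E$. A factorial module is torsion free by definition, so $\ann(E)=0$, and $A$ is a UFD (in particular a factorization domain). To see that every irreducible $r\in A$ belongs to $W(E)^\#$, I would argue $rE\neq E$ as follows: pick any $0\neq m\in E$ and write its irreducible factorization $m=a_1\cdots a_n m'$ with $m'$ irreducible in $E$. If $rE=E$ then $m'=rm''$ for some $m''\in E$, and since $r$ is a non-unit this is a proper factor of $m'$ in $E$, contradicting the irreducibility of $m'$. Combined with $\ann(E)=0$, this gives $\Irr(A)\subseteq W(E)^\#$, so Theorem \ref{Irr(W(E)isdense} applies and $\Ec(\Irr(W(E)^\#))$ is dense in $qD_A(E)$.

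Next I would show $\Irr(A)\subseteq S$. The paper already records that in a factorial module the irreducible elements of $A$ are prime to $E$. Since $E$ is torsion free, Proposition \ref{primetoMandmaximality} then forces $aE$ to be a maximal element of $\{bE:b\in W(E)^\#\}$ for every such $a$, which is precisely the defining property of $S$. Thus $\Irr(A)\subseteq S$, and after passing to equivalence classes we obtain the inclusion
\[
\Ec(\Irr(W(E)^\#))\ \subseteq\ \Ec(S).
\]

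Finally, since $\Ec(\Irr(W(E)^\#))$ is dense by the previous paragraph, the larger set $\Ec(S)$ is also dense in $qD_A(E)$, which is the desired conclusion. I do not expect a serious obstacle here; the only minor point to keep track of is justifying that an irreducible element of $E$ exists and that $rm''$ really is a proper factorization of an irreducible element, which amounts to unpacking the definitions of irreducibility in $E$ and of a factorial module from \cite{Lu}.
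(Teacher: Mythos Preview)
Your proposal is correct and follows essentially the same route as the paper: both rely on the fact (recorded in the paper via \cite{Lu}) that in a factorial module every irreducible of $A$ is prime to $E$, then invoke Proposition~\ref{primetoMandmaximality} to place irreducibles in $S$, and conclude density via the argument of Theorem~\ref{Irr(W(E)isdense}. The only packaging difference is that you apply Theorem~\ref{Irr(W(E)isdense} and then pass to a superset, whereas the paper rewrites the density argument directly for $\Ec(S)$; your extra verification that $\Irr(A)\subseteq W(E)^{\#}$ is a detail the paper leaves implicit.
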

\begin{proof}
The proof is similar to Theorem \ref{Irr(W(E)isdense} and justified by the fact that each irreducible element of $A$ is prime to $E$ by \cite{Lu}.
\end{proof}
A topological space $X$ is called a \textit{Baire} space if the union of countably many closed sets has an empty interior in $X$ in which each of the closed sets in the intersection has an empty interior; equivalently, the intersection of countably many open sets is dense in $X$ in which each of the open sets in the intersection is dense in $X$ \cite[Theorem 48.1]{Munkres}. Now, we will determine when $qD_A(E)$ is a Baire space.
\begin{corollary}\label{Baire}
Let $E$ be a factorial module over a ring $A$ then $qD_A(E)$ is a Baire space.
\end{corollary}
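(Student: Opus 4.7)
The plan is to use the Baire characterization in terms of countable intersections of dense open sets, together with the two results immediately preceding the corollary. Specifically, I want to show that any countable intersection of dense open sets in $qD_A(E)$ is itself dense, by proving the stronger fact that every such intersection already contains the fixed dense set $\Ec(S)$.

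First I would fix a countable family $\{O_n\}_{n\in\mathbb N}$ of dense open subsets of $qD_A(E)$. By Proposition \ref{opendenseset}, each $O_n$ contains $\Ec(S)$, since that proposition says every open dense set in $qD_A(E)$ contains $\Ec(S)$ (regardless of the factoriality hypothesis; factoriality is only needed to guarantee $\Ec(S)$ is nonempty/dense). Therefore
\[
\bigcap_{n\in\mathbb N} O_n \;\supseteq\; \Ec(S).
\]

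Next I would invoke Corollary \ref{EC(S)dense}: since $E$ is a factorial $A$-module, $\Ec(S)$ is dense in $qD_A(E)$. Combining this with the inclusion above, $\bigcap_{n} O_n$ contains a dense subset, hence is itself dense in $qD_A(E)$. By the equivalent formulation of the Baire property recalled just before the corollary, this shows that $qD_A(E)$ is a Baire space.

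There is essentially no obstacle here, because the hard work has already been done in Proposition \ref{opendenseset} and Corollary \ref{EC(S)dense}; the corollary is a one-line consequence. The only subtle point worth noting in the write-up is that Proposition \ref{opendenseset} gives a uniform lower bound $\Ec(S)$ for every open dense set, which is precisely the feature that makes the countable intersection argument collapse to a single application of density of $\Ec(S)$ rather than needing any Baire-category-style inductive shrinking of basic open sets.
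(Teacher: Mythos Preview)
Your proof is correct and follows essentially the same approach as the paper: the paper's proof simply cites Proposition~\ref{opendenseset}, Corollary~\ref{EC(S)dense}, and the Baire characterization from \cite[Theorem~48.1]{Munkres}, which is exactly the combination you spell out explicitly.
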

\begin{proof}
The proof is analogous to \cite[Theorem 4]{YiKo} and holds in view of Proposition \ref{opendenseset}, Corollary \ref{EC(S)dense} and \cite[Theorem 48.1]{Munkres}.
\end{proof}
We end up this section by providing two results regarding homeomorphism. A \textit{homeomorphism} between two topological spaces $X$ and $Y$ is a function $f:X\to Y$ that is a continuous bijection with a continuous inverse $f^{-1}:Y\to X$. In case of the existence of such a homemorphism between these topological spaces, we say $X$ and $Y$ are homeomorphic \cite{Munkres}.
\begin{theorem}\label{homeo}
The quasi divisor topology $qD_A(E)$ over a factorial module $E$ is homeomorphic to the divisor topology on integral domains $D(A)$. In particular, the topologies are the same.
\end{theorem}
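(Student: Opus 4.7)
The plan is to exhibit the natural map $\varphi : qD_A(E) \to D(A)$ defined by $[a]_E \mapsto [a]_A$, and to show that it is bijective and identifies the bases on the nose — so the two topologies literally coincide after renaming. The argument splits cleanly into (a) identifying the underlying sets of equivalence classes, and (b) identifying the basic opens. For (a), I would first check that $W(E)^{\#} = W(A)^{\#}$, i.e., both equal $A \setminus (U(A) \cup \{0\})$. Since $E$ is factorial, it is torsion-free and so $\ann(E) = 0$; units $a$ obviously give $aE = E$; conversely, a nonzero non-unit factors as $a = r_1 \cdots r_n$ in $A$ with each $r_i$ irreducible, hence prime to $E$, and Proposition~\ref{primetoMandmaximality} forces $r_i E \subsetneq E$, whence $aE \subseteq r_1 E \subsetneq E$.

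Next, I would prove the key computational step: for $a,b \in W(E)^{\#}$, $aE \subseteq bE$ if and only if $b \mid_A a$. This simultaneously shows that the two equivalence relations coincide (since $aE = bE \Leftrightarrow aA = bA$), that $\varphi$ is well-defined and bijective, and that $U_a^E = U_a^A$ for every $a$. One direction is trivial: $a = bc$ gives $aE \subseteq bE$. For the other direction I would fix an irreducible $m \in E$ (which exists because $E$ is factorial), use $aE \subseteq bE$ to write $am = bn$ for some $n \in E$, and then factor $a = r_1 \cdots r_k$, $b = s_1 \cdots s_l$ in $A$ together with $n = t_1 \cdots t_j n'$ in $E$, where $n'$ is irreducible in $E$. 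The resulting equation
$$r_1 \cdots r_k \, m \;=\; s_1 \cdots s_l \, t_1 \cdots t_j \, n'$$
then forces, by uniqueness of irreducible factorization in the factorial module $E$, the multiset $\{s_1,\ldots,s_l\}$ to lie inside $\{r_1,\ldots,r_k\}$ up to associates, giving $b \mid_A a$.

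Combining the two steps, $\varphi$ is a bijection that sends every basic open $U_a^E$ to the same set $U_a^A$, so the two topologies are not merely homeomorphic but equal after the identification $[a]_E \leftrightarrow [a]_A$. The main obstacle will be the unique-factorization step: one must be careful that $m$ is irreducible in $E$, not in $A$, so when comparing the two sides of the displayed equation one has to use the factorial-module version of uniqueness, matching the $A$-irreducible factors on each side and separately matching the $E$-irreducible tails $m$ and $n'$ up to $E$-associates. Once this bookkeeping is done cleanly, every other step is a short unpacking of definitions together with Proposition~\ref{primetoMandmaximality}.
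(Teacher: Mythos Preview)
Your proof is correct and follows essentially the same route as the paper's one-line argument, which simply asserts that the equivalence relation $\sim$ on $W(E)^{\#}$ agrees with the associate relation from \cite{YiKo}; you go further and verify the stronger (and actually needed) statement $aE\subseteq bE \Leftrightarrow b\mid_A a$, which the paper invokes elsewhere as \cite[Theorem~2.1]{Lu}, so your argument is in fact more complete. One minor point: Proposition~\ref{primetoMandmaximality} does not by itself yield $r_iE\subsetneq E$ (its conclusion presupposes $r_i\in W(E)^{\#}$), but the needed strict containment follows directly by noting that an irreducible $r_i\in A$ cannot divide an irreducible element of $E$.
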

\begin{proof}
The proof follows from the fact that the equivalence relation $\sim$ and defined in \cite{YiKo} are the same in factorial module.
\end{proof}
\begin{theorem}\label{homeo2}
Let $E$ be a finitely generated multiplication module over an integral domain $A$. If $\ann(E)$ is a prime ideal, then the divisor topology on $A/\ann(E)$ and $qD_A(E)$ are homeomorphic.
\end{theorem}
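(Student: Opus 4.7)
The plan is to build an explicit homeomorphism
$\phi:\Ec(W(E)^{\#})\longrightarrow \Ec(W(A/\ann(E))^{\#})$ given by $\phi([a])=[\bar a]$, where $\bar a=a+\ann(E)$, and verify it is a bijective correspondence that sends the basic opens of $qD_A(E)$ exactly onto the basic opens of the divisor topology $D(A/\ann(E))$.

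The central technical input is the following fact for a finitely generated multiplication $A$-module $E$: for ideals $I,J\trianglelefteq A$,
\[
IE=JE\iff I+\ann(E)=J+\ann(E).
\]
The nontrivial direction follows from the standard determinant/Nakayama-type argument available for multiplication modules (equivalently, viewing $E$ as a faithful finitely generated multiplication module over $A/\ann(E)$, for which the map $\bar I\mapsto \bar IE$ is a bijection from ideals of $A/\ann(E)$ onto submodules of $E$). Specialising to principal ideals $I=(a)$ and $J=(b)$ yields $aE=bE\iff (a)+\ann(E)=(b)+\ann(E)$, i.e.\ $\bar a(A/\ann(E))=\bar b(A/\ann(E))$. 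Taking $J=A$ gives $aE=E\iff \bar a$ is a unit in $A/\ann(E)$; and $a\in\ann(E)\iff \bar a=0$. From these two observations, $a\in W(E)^{\#}$ if and only if $\bar a$ is a nonzero nonunit of the domain $A/\ann(E)$, i.e.\ $\bar a\in W(A/\ann(E))^{\#}$.

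Using the displayed equivalence, I will verify $\phi$ is well-defined and injective (the condition $[a]=[b]$ in $\Ec(W(E)^{\#})$ is identical to the condition $[\bar a]=[\bar b]$ in $\Ec(W(A/\ann(E))^{\#})$), and surjective (given $\bar a\in W(A/\ann(E))^{\#}$, lift to any $a\in A$; then $a\in W(E)^{\#}$ by the observations above and $\phi([a])=[\bar a]$). For the topological part, note that
\[
aE\subseteq bE \iff (a)+\ann(E)\subseteq (b)+\ann(E) \iff \bar a(A/\ann(E))\subseteq \bar b(A/\ann(E)),
\]
so if $V_{\bar a}$ denotes the basic open of the divisor topology $D(A/\ann(E))$ defined in \cite{YiKo}, then $\phi(U_a)=V_{\bar a}$ and $\phi^{-1}(V_{\bar a})=U_a$. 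Hence $\phi$ maps a basis bijectively onto a basis, and is therefore a homeomorphism.

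The main obstacle is the bijection between submodules of $E$ and ideals of $A/\ann(E)$; once that is in hand every other step is a translation. The one subtlety to double-check is that the passage from ``$\bar a$ nonunit in $A/\ann(E)$'' to ``$a\in W(E)$'' genuinely uses finite generation (for a non-finitely generated multiplication module $aE=E$ need not force $(a)+\ann(E)=A$), which is why the hypothesis that $E$ is finitely generated is essential and is the place where the Nakayama-type result must be invoked.
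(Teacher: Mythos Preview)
Your proposal is correct and takes essentially the same approach as the paper: the paper defines the inverse map $\varphi:\Ec((A/\ann(E))^{\#})\to\Ec(W(E)^{\#})$, $[r+\ann(E)]\mapsto[r]$, and appeals to \cite[\S 2 Corollary]{Smith} for exactly the ideal--submodule correspondence $IE=JE\iff I+\ann(E)=J+\ann(E)$ that you state and use. Your write-up is in fact more explicit than the paper's (which simply says ``one may check''), and your remark on where finite generation enters via Nakayama is accurate.
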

\begin{proof}
One may check that the following claimed function
$\varphi:\Ec((A/\ann(E) ) ^\#)\to \Ec(W(E)^\#)$ defined by $[r+\ann(E)]\mapsto [r]$ satisfies the homeomorphism conditions as a result of the properties that $\varphi(B_{a+\ann(E)})=U_a$ and $\varphi^{-1}(U_a)=B_{a+\ann(E)}$ by \cite[\S 2 Corollary]{Smith} where the basis element $B_a$ is defined in \cite{YiKo}.
\end{proof}
\section{Separation Axioms and Quasi Second Modules}
It is a classic routine to check the separability axioms for a given topology. Recall that a topological space $X$ is called a \textit{$T_0$-space} if for any two distinct points in $X$, there is an open set containing not both of them. Also, $X$ is called a \textit{$T_1$-space} if given two distinct points of $X$, there is an open set for each containing one not the other; $X$ is $T_1$-space if and only if each singleton is closed. Moreover, $X$ is called a \textit{Hausdorff} or \textit{$T_2$-space} if for any two distinct points of $X$, there are disjoint open sets containing them, \cite{Munkres}.
\begin{proposition}\label{T0}
$qD_A(E)$ is a $T_0$-space.
\end{proposition}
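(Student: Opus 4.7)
The plan is to verify the $T_0$ axiom directly from the definition of the basic open sets $U_a$. Take two distinct points $[a], [b] \in \mathrm{EC}(W(E)^{\#})$. By the very definition of the equivalence relation $\sim$, the hypothesis $[a] \neq [b]$ is exactly the statement $aE \neq bE$. Consequently at least one of the inclusions $aE \subseteq bE$ or $bE \subseteq aE$ must fail; this is the dichotomy that drives the whole argument.

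Next I would handle the two cases symmetrically. If $aE \not\subseteq bE$, then by the definition $U_b = \{[c] \in \mathrm{EC}(W(E)^{\#}) : bE \subseteq cE\}$ we see that $[a] \notin U_b$, while trivially $[b] \in U_b$ by Proposition \ref{Prop1}(i). Hence $U_b$ is an open set containing $[b]$ but not $[a]$. Symmetrically, if $bE \not\subseteq aE$, then $U_a$ is an open set containing $[a]$ but not $[b]$. In either case we have produced an open set containing exactly one of the two points, which is precisely the $T_0$ separation condition.

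Since $\{U_a\}_{a \in W(E)^{\#}}$ is a basis for $qD_A(E)$, exhibiting such a basic open set suffices. There is no real obstacle here: the argument rests entirely on the fact that the equivalence relation $\sim$ was designed to force $[a] \neq [b] \iff aE \neq bE$, together with the asymmetric formulation of $U_a$ in terms of one-sided inclusion $aE \subseteq cE$. The only thing worth noting in the write-up is that we should not try to upgrade this to $T_1$, since the example $\mathbb{Z}_8$ referenced later in the paper shows that the reverse inclusion $aE \subseteq bE$ may hold while $aE \neq bE$, which is precisely what prevents the argument above from yielding $T_1$ separation.
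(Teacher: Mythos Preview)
Your approach is essentially the same as the paper's: both argue that $[a]\neq[b]$ forces $aE\neq bE$, whence one of the basic sets $U_a$, $U_b$ omits one of the two points. There is, however, a swap in your case analysis. Since $[c]\in U_b$ means $bE\subseteq cE$, the hypothesis $aE\not\subseteq bE$ does \emph{not} give $[a]\notin U_b$; rather it gives $[b]\notin U_a$ (because $[b]\in U_a\iff aE\subseteq bE$), so $U_a$ is the separating set in that case. Symmetrically, $bE\not\subseteq aE$ gives $[a]\notin U_b$, making $U_b$ the separating set. The overall conclusion survives because the dichotomy still covers both possibilities, but the implications as you wrote them do not hold and you should interchange the roles of $U_a$ and $U_b$ in the two cases.
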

\begin{proof}
Given two distinct elements $[a]$ and $[b]$ in $\Ec(W(E)^\#)$, we have $aE\neq bE$ which provides that either $[b]\not\in U_a$ or $[a]\not\in U_b$.
\end{proof}
The previous proposition indicates that $qD_A(E)$ is always a $T_0$-space; however, the following example shows that it is not the case for $T_1$-space.
\begin{example}\label{Z_8isnotT_1}
Consider the $\Z$-module $\Z_8$. One can compute that $EC(W(\Z)^\#)=\{[2],[4]=[6]\}$ together with the basis elements $U_2=\{[2]\}$ and $U_4=\{[2],[4]\}$. It is clear that $qD_A(E)$ is not a $T_1$-space because it is not possible to separate the points $[2]$ and $[4]$ by open sets.
\end{example}
The following question in this context can naturally arise whether which type of modules enable $qD_A(E)$ to be a $T_1$-space. In order to answer this question, we introduce quasi second modules motivated by this problem as follows.
\begin{definition}
An $A$-module $E$ is said to be a \textit{quasi second module} if $0\neq bE\subseteq aE\neq E$ implies that $bE=aE$ for each $a,b\in A$. In particular, a commutative ring $A$ with unity is called a \textit{quasi second ring} if $A$-module $A$ is a quasi second module.
\end{definition}
Moreover, an $A$-module $E$ is called a \textit{divisible module} if $aE=E$ for all $a\in A$. Now, we give several examples of quasi second modules.
\begin{example}\label{Example2}
$$\{\text{Vector Spaces}\}\subsetneq \{\text{Divisible Modules}\}\subsetneq\{\text{Second Modules}\}\subsetneq\{\text{Quasi Second Modules}\}$$
\end{example}
In the following example, we provide counterexamples for each opposite direction of Example \ref{Example2}.
\begin{example}\label{Example3}
\begin{enumerate}
\item[i)] $\Z$-module $\mathbb Q$ is a divisible module which is not a vector space.
\item[ii)] $\Z$-module $\Z_p$ is a second module which is not a divisible module where $p$ is a prime number.
\item[iii)] $\Z$-module $\Z_{p^2}$ is a quasi second module but not a second module.
\end{enumerate}
\end{example}
\begin{theorem}\label{Homo qsm}
Let $\varphi:E\to E'$ be a $A$-module homomorphism. Then,
\begin{enumerate}
\item[i)] $E$ is a quasi second $A$-module if $\varphi$ is injective and $E'$ is quasi second $A$-module.
\item[ii)] $E'$ is a quasi second $A$-module if $\varphi$ is surjective and $E$ is a quasi second $A$-module.
\end{enumerate}
\end{theorem}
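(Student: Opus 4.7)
The plan is to verify the defining chain condition of a quasi second module directly on the target module in each direction, translating the hypothesis through $\varphi$.

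For part (i), I would take $a,b\in A$ with $0\neq bE\subseteq aE\neq E$ and apply $\varphi$ throughout. Injectivity preserves nonvanishing, so $\varphi(bE)=b\varphi(E)\neq 0$; and it forces $a\varphi(E)\neq \varphi(E)$, because otherwise every $\varphi(e)$ is of the form $a\varphi(e')=\varphi(ae')$ and injectivity collapses $e=ae'$, contradicting $aE\neq E$. Invoking the quasi second property of $E'$ on the chain $0\neq b\varphi(E)\subseteq a\varphi(E)\neq \varphi(E)$ should then yield $b\varphi(E)=a\varphi(E)$, and reflecting through injectivity recovers $bE=aE$.

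For part (ii), I would begin with $a,b\in A$ satisfying $0\neq bE'\subseteq aE'\neq E'$. Surjectivity lets me write $bE'=\varphi(bE)$ and $aE'=\varphi(aE)$, so $aE'\neq E'$ forces $aE\neq E$ (otherwise $\varphi(aE)=\varphi(E)=E'$), and $bE'\neq 0$ forces $bE\neq 0$. The plan is then to establish the containment $bE\subseteq aE$, apply the quasi second property of $E$ to conclude $bE=aE$, and push forward via $\varphi$ to obtain $bE'=aE'$.

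The hard part in both directions is reconciling the quasi second property, which is intrinsic to the ambient module, with the fact that $\varphi$ only relates chains through the submodule $\varphi(E)\subseteq E'$ or through the quotient $E/\ker\varphi$. Specifically, in part (i) one must justify that the chain produced inside $\varphi(E)$ genuinely allows invoking $E'$'s quasi secondness (the condition is ostensibly about $bE'$ and $aE'$, not $b\varphi(E)$ and $a\varphi(E)$); and in part (ii) the containment $bE'\subseteq aE'$ only pulls back to $bE\subseteq aE+\ker\varphi$, so bridging this to $bE\subseteq aE$ before invoking the quasi second property of $E$ is the delicate step.
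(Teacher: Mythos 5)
The two difficulties you flag at the end of your proposal are not merely delicate steps awaiting a clever argument: they are genuine gaps, and in fact they cannot be closed, because the statement as written is false. For part (i), the quasi second hypothesis on $E'$ constrains only the submodules $cE'$ for $c\in A$; it says nothing about the chain $0\neq b\varphi(E)\subseteq a\varphi(E)\neq\varphi(E)$ that you correctly produce inside the submodule $\varphi(E)$. In particular, $a\varphi(E)\neq\varphi(E)$ gives no control over whether $aE'\neq E'$, which is what you would need in order to invoke the hypothesis on $E'$. A concrete counterexample is the inclusion $\varphi:\mathbb{Z}\hookrightarrow\mathbb{Q}$ of $\mathbb{Z}$-modules: $\mathbb{Q}$ is divisible, hence quasi second by the paper's own Example \ref{Example2}, yet $\mathbb{Z}$ is not, since $0\neq 4\mathbb{Z}\subseteq 2\mathbb{Z}\neq\mathbb{Z}$ while $4\mathbb{Z}\neq 2\mathbb{Z}$. (The paper's proof commits exactly the error you were worried about: it asserts $bE'\neq E'$ ``as $\varphi$ is injective,'' but here $2\mathbb{Z}\neq\mathbb{Z}$ while $2\mathbb{Q}=\mathbb{Q}$; likewise $aE\subseteq bE$ does not give $aE'\subseteq bE'$, nor does $aE'=bE'$ return $aE=bE$.)

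For part (ii) the situation is analogous: as you observe, $bE'\subseteq aE'$ only pulls back to $bE\subseteq aE+\ker\varphi$, and this cannot in general be upgraded to $bE\subseteq aE$. Part (ii) is equivalent to saying that every quotient of a quasi second module is quasi second, and this also fails. Take $A=k[x,y]$ and let $E=V\oplus W$ with $V=\langle v_1,v_2\rangle$, $W=\langle w_1,w_2,w_3\rangle$, where $x$ and $y$ annihilate $W$ and $xv_1=w_1$, $xv_2=w_2$, $yv_1=w_3$, $yv_2=w_1$. Every submodule of the form $cE$ with $0\neq cE\neq E$ is a $2$-dimensional subspace of the $3$-dimensional space $W$, so these submodules are pairwise incomparable and $E$ is quasi second; but for the submodule $K=\langle w_3\rangle$ one has $0\neq y(E/K)=\langle\overline{w_1}\rangle\subsetneq\langle\overline{w_1},\overline{w_2}\rangle=x(E/K)\neq E/K$, so $E/K$ is not quasi second. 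So your plan is right to isolate these two bridging steps as the crux, but the honest conclusion is that both implications require additional hypotheses rather than a completion of the argument.
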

\begin{proof}
\begin{enumerate}
\item[i)] Let $a,b\in A$ such that $0\neq aE\subseteq bE\neq E$. Note that $aE\neq 0$ and $bE'\neq E'$ as $\varphi$ is injective. Thus, $0\neq aE'\subseteq bE'\neq E'$ which implies that $aE=bE$.
\item[ii)] Let $a,b\in A$ so that $0\neq aE'\subseteq bE'\subseteq E'$. Since $\varphi$ is surjective, then $aE\neq 0$ and $bE\neq E$. It follows that $0\neq aE\subseteq bE\neq E$ and so $aE'=bE'$.
\end{enumerate}
\end{proof}
The next corollary can be obtained by Theorem \ref{Homo qsm}.
\begin{corollary}\label{ExamplesofQSM}
i) Every submodule of a quasi second module is a quasi second module.

ii) Any factor module by a submodule of a quasi second module is a quasi second module. In particular, $E$ is a quasi second $A$-module if and only if $E/E'$ is a quasi second $A$-module for all submodule $E'$ of $E$.

iii) If $\oplus_i E_i$ or $\prod_i E_i$ is a quasi second $A$-module for the collection of $A$-modules $\{E_i\}_{i\in I}$ then each $E_i$ is a quasi second $A$-module.
\end{corollary}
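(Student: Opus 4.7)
The plan is to derive all three parts as immediate consequences of Theorem \ref{Homo qsm}, by exhibiting the appropriate injective or surjective $A$-module homomorphisms in each case, so that no new computation at the level of the defining condition $0 \neq bE \subseteq aE \neq E$ is required.

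For part (i), given a submodule $N$ of a quasi second module $E$, the inclusion $\iota\colon N \hookrightarrow E$ is an injective $A$-module homomorphism. Part (i) of Theorem \ref{Homo qsm} then directly yields that $N$ is quasi second. For part (ii), given any submodule $E'$ of a quasi second $A$-module $E$, the canonical projection $\pi\colon E \to E/E'$ is a surjective $A$-module homomorphism, so part (ii) of Theorem \ref{Homo qsm} gives that $E/E'$ is quasi second. For the ``in particular'' equivalence, the forward direction is exactly what was just proved; the reverse direction follows by specializing to $E' = 0$ and using $E \cong E/0$.

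For part (iii), I would argue via the canonical projections: for each $j \in I$ the map $\pi_j\colon \prod_i E_i \twoheadrightarrow E_j$ (respectively its restriction $\bigoplus_i E_i \twoheadrightarrow E_j$) is a surjective $A$-module homomorphism, whence part (ii) of Theorem \ref{Homo qsm} forces $E_j$ to be quasi second. Alternatively, one could invoke part (i) of Theorem \ref{Homo qsm} by using the canonical injections $E_j \hookrightarrow \bigoplus_i E_i$ and $E_j \hookrightarrow \prod_i E_i$; either route suffices.

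There is no genuine obstacle here, since each item is a one-line deduction from Theorem \ref{Homo qsm}; the only point requiring any care is to correctly identify the direction of the homomorphism (source versus target) relative to the two parts of the theorem, so that the hypothesis ``$E$ (or $E'$) is quasi second'' and the conclusion are matched to the correct module in each application.
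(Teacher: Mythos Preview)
Your proposal is correct and matches the paper's approach exactly: the paper simply states that the corollary can be obtained from Theorem~\ref{Homo qsm}, and you have filled in precisely which injective or surjective $A$-module homomorphisms to invoke for each part. Your added detail (the inclusion for (i), the quotient map for (ii) with the $E'=0$ specialization for the converse, and the canonical projections or injections for (iii)) is exactly what the paper leaves implicit.
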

\begin{theorem}\label{localization}
Let $E$ be an $A$-module with a multiplicative closed set $S$ of $A$. Then, $S^{-1}E$ is a quasi second $S^{-1}A$-module if $E$ is a quasi second $A$-module.
\end{theorem}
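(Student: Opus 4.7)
My plan is to verify the quasi second condition for $S^{-1}E$ over $S^{-1}A$ by pulling it back, via the $S$-torsion quotient of $E$, to the quasi second condition on $E$ itself. First, for $a/t, b/u \in S^{-1}A$ I would suppose $0 \neq (b/u)(S^{-1}E) \subseteq (a/t)(S^{-1}E) \neq S^{-1}E$. Since $1/t$ and $1/u$ are units in $S^{-1}A$, the first observation is that these submodules coincide with $S^{-1}(aE)$ and $S^{-1}(bE)$ respectively, so the claim reduces to showing $S^{-1}(bE) = S^{-1}(aE)$ under the assumption $0 \neq S^{-1}(bE) \subseteq S^{-1}(aE) \neq S^{-1}E$ for $a, b \in A$. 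This already forces $aE \neq E$ and $bE \neq 0$, and consequently also $bE \neq E$ (else $S^{-1}(bE) = S^{-1}E$) and $aE \neq 0$.

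The crux is to use the quasi second property of $E$ to show that $aE$ and $bE$ are \emph{$S$-stable}, meaning $saE = aE$ and $sbE = bE$ for every $s \in S$. Indeed $saE \subseteq aE \neq E$, and $saE$ is nonzero, for otherwise every element of $aE$ would be annihilated by $s$ and hence $S^{-1}(aE) = 0$, contradicting $S^{-1}(aE) \supseteq S^{-1}(bE) \neq 0$; the quasi second condition then forces $saE = aE$, and the same reasoning handles $bE$. Next I would set $T = \{e \in E : se = 0 \text{ for some } s \in S\}$ and $\bar{E} = E/T$. By Corollary \ref{ExamplesofQSM}(ii), $\bar{E}$ is quasi second, and it is $S$-torsion-free with $S^{-1}\bar{E} \cong S^{-1}E$ canonically. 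The $S$-stability of $aE$ makes multiplication by each $s \in S$ bijective on $a\bar{E}$ (injective because $\bar{E}$ has no $S$-torsion, surjective because $saE = aE$), which forces $S^{-1}(a\bar{E}) = a\bar{E}$ inside $S^{-1}\bar{E}$; likewise for $b\bar{E}$.

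Under the resulting identifications $S^{-1}(aE) = S^{-1}(a\bar{E}) = a\bar{E}$ and similarly for $b$, the hypothesis $S^{-1}(bE) \subseteq S^{-1}(aE)$ becomes the genuine containment $b\bar{E} \subseteq a\bar{E}$ in $\bar{E}$, together with $b\bar{E} \neq 0$ (else $bE \subseteq T$ forces $S^{-1}(bE) = 0$) and $a\bar{E} \neq \bar{E}$ (else $aE + T = E$ forces $S^{-1}(aE) = S^{-1}E$). Applying the quasi second property of $\bar{E}$ yields $b\bar{E} = a\bar{E}$, and localizing back recovers $S^{-1}(bE) = S^{-1}(aE)$, as required. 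The main obstacle I anticipate is the mismatch between submodule containment in $E$ and in $S^{-1}E$, since $S^{-1}(bE) \subseteq S^{-1}(aE)$ does not by itself deliver $bE \subseteq aE$; the $T$-quotient device, once $S$-stability has been established, is precisely what converts the localization-level containment into a module-level containment to which the quasi second hypothesis on $E$ (inherited by $\bar{E}$) can be applied directly.
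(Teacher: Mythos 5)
Your argument is correct, and it follows a genuinely different route from the paper's. The paper reduces at once to elements $a,b\in A$ and then claims that the localized containment $\frac{a}{s}S^{-1}E\subseteq\frac{b}{s'}S^{-1}E$ pulls back to the honest containment $0\neq aE\subseteq bE\neq E$, to which the quasi second hypothesis on $E$ is applied directly; but unwinding an equality $\frac{am}{ss'}=\frac{bm'}{s's''}$ in $S^{-1}E$ only yields $u\,am\in bE$ for some $u\in S$, i.e.\ $aE\subseteq bE+T$ where $T$ is the $S$-torsion submodule of $E$ --- which is precisely the ``mismatch'' you flag at the end of your write-up. Your proof closes that gap: you first use the quasi second property of $E$ itself to make $aE$ and $bE$ $S$-stable (the nonvanishing of $saE$ and $sbE$ being guaranteed by the nonvanishing of the localized submodules), and then the passage to $\bar E=E/T$ --- still quasi second by Corollary \ref{ExamplesofQSM}(ii), $S$-torsion-free, with $S^{-1}\bar E\cong S^{-1}E$ --- identifies $S^{-1}(a\bar E)$ with $a\bar E$ itself, so the localized containment becomes a genuine containment of submodules of $\bar E$ and the quasi second hypothesis applies legitimately, with the side conditions $b\bar E\neq 0$ and $a\bar E\neq\bar E$ checked correctly. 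The cost of your route is the extra bookkeeping with $T$ and the stability step; what it buys is that every invocation of the quasi second condition is applied to an actual containment at the module level, with no unjustified descent from $S^{-1}E$ to $E$ --- which is exactly the point at which the paper's shorter argument is thin and would need essentially your reduction to be made airtight.
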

\begin{proof}
Let $\frac{a}{s}$ and $\frac{b}{s'}$ be elements of $S^{-1}A$ such that $0_{S^{-1}E}\neq \frac{a}{s}S^{-1}E\subseteq \frac{b}{s'}S^{-1}E\neq S^{-1}E$. We claim that $0\neq aE\subseteq bE\neq E$. Suppose that the claim does not hold, then there is an $m\in E$ such that $am\not\in bE$. It follows that $\frac{am}{ss'}\in \frac{a}{s}S^{-1}E\subseteq \frac{b}{s}S^{-1}E$. This implies that there exists $t\in S$ so that $ts(ams-bm's'')=0$ for some $m'\in E$ and $s''\in S$. Hence, $\frac{am}{s'}=\frac{bm'}{s''}\in S^{-1}(bE)$, a contradiction which completes the proof.
\end{proof}
In the next example, we provide a counterexample for the opposite direction of Theorem \ref{localization}.
\begin{example}
Consider the $\Z$-module $E=\Z^2$ with a multiplicative subset $S=\Z-\{0\}$. One can see that $A$ is not a quasi second ring but $S^{-1}E$ is a quasi second $A$-module where $S^{-1}A=\mathbb Q$ is the total quotient field of $\Z$ and $S^{-1}E\cong \mathbb Q\times \mathbb Q$.
\end{example}

Recall that the proper ideal $I$ of a ring $A$ is called \textit{semiprime} (or \textit{radical}) ideal if whenever $a^n\in I$ for some $a\in A$ and $n\in \mathbb N$ implies that $a\in I$. It is well known that an ideal $I$ is a semiprime ideal if and only if for every $a\in A$, $a^2\in I$ implies that $a\in I$  if and only if $\sqrt{I}=I$. 
\begin{proposition}\label{annM semiprime}
Suppose $E$ is a quasi second $A$-module then $0\neq aE\neq E$ is a second submodule of $E$ for every $a\in A$. In particular, the converse holds if $\ann(E)$ is a semiprime ideal of $A$.
\end{proposition}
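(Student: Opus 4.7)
The plan is to handle the forward and converse directions separately, both via short case analyses on the definition of quasi second modules.

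For the forward direction, assume $E$ is quasi second and fix $a \in A$ with $0 \neq aE \neq E$. To verify that $aE$ is a second submodule, I would take an arbitrary $r \in A$ and analyze $r(aE) = (ra)E$. Since $(ra)E \subseteq aE \neq E$, the definition of quasi second leaves only two possibilities: either $(ra)E = 0$, or $0 \neq (ra)E \subseteq aE \neq E$ and then by the quasi second property $(ra)E = aE$. Either way, $r(aE)\in\{0,aE\}$, so $aE$ is second. This direction is essentially immediate from the definition and should take only a few lines.

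For the converse, assume $\ann(E)$ is semiprime and that every nonzero proper $aE$ is a second submodule, and suppose $0 \neq bE \subseteq aE \neq E$; the goal is $bE = aE$. The key trick is to apply the second-submodule property to $aE$ via the action of $b$: since $aE$ is second, $b(aE)$ equals either $0$ or $aE$. In the first case, I would multiply the inclusion $bE \subseteq aE$ by $b$ to get $b^2 E \subseteq b(aE) = 0$, so $b^2 \in \ann(E)$; the semiprime hypothesis then forces $b \in \ann(E)$, contradicting $bE \neq 0$. In the second case, note that $bE$ itself satisfies $0 \neq bE \neq E$ (since $bE \subseteq aE \subsetneq E$), so by hypothesis $bE$ is a second submodule; then $a(bE) = (ab)E = b(aE) = aE \neq 0$, so the only possibility for the action of $a$ on the second submodule $bE$ is $a(bE)=bE$, giving $bE = aE$.

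The main obstacle is the first subcase of the converse: without the semiprime hypothesis on $\ann(E)$, the conclusion $b^2 \in \ann(E)$ does not rule out $bE \neq 0$, and the argument collapses. This is exactly where the hypothesis is used, and it explains why the converse is only stated under that extra assumption. All other steps are essentially bookkeeping with the defining implication $0 \neq bE \subseteq aE \neq E \Rightarrow bE = aE$ and the identity $b(aE) = a(bE)$.
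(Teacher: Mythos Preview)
Your proof is correct and follows essentially the same approach as the paper: both directions hinge on the identity $b(aE)=a(bE)=(ab)E$ and on applying the second-submodule property of $aE$ to the element $b$, with the semiprime hypothesis disposing of the case $abE=0$. The only minor difference is in the nonzero case of the converse: you invoke that $bE$ is also second to conclude $a(bE)=bE$, whereas the paper simply observes $aE=abE\subseteq bE\subseteq aE$ directly, avoiding the extra appeal to the hypothesis.
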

\begin{proof}
Suppose $E$ is a quasi second $A$-module and choose $a\in A$ such that $0\neq aE\neq E$. Let $x\in A$ so that $0\neq xaE\subseteq aE$. It follows that $xaE=aE$ by the assumption, thus $aE$ is a second submodule of $E$. For the converse, assume $\ann(E)$ is a semiprime ideal of $A$ and $0\neq aE\neq E$ is a second submodule of $E$ for every $a\in A$. Let $0\neq bE\subseteq aE$ then either $abE=aE$ or $abE=0$ as $aE$ is a second submodule. If $abE=0$ then $b^2E=0$ which implies $bE=0$ since $\ann(E)$ is a semiprime ideal. Therefore, $abE\neq 0$ which gives $aE=abE\subseteq bE\subseteq aE$, that is, $aE=bE$. Thus, $E$ is a quasi second $A$-module.
\end{proof}
\begin{proposition}\label{ann(aE)maximal}
Let $E$ be a finitely generated quasi second $A$-module then $\ann(aE)$ is a maximal ideal of $A$ for every $a\in W(E)^\#$.
\end{proposition}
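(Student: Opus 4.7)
The plan is to combine Proposition~\ref{annM semiprime} with a Nakayama-type argument, using the crucial hypothesis that $E$ (and hence $aE$) is finitely generated. Fix $a\in W(E)^\#$, so $0\neq aE\neq E$. By Proposition~\ref{annM semiprime}, $aE$ is a second submodule of $E$. Recalling from \cite[Lemma 1.2]{Yas} (cited in the introduction) that a submodule $L$ is second precisely when $\ann(L)=W(L)$ is a prime ideal, we immediately obtain that $\mathfrak{p}:=\ann(aE)$ is a prime ideal of $A$. The remaining task is to upgrade primality to maximality.

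For the maximality, I would argue by contradiction. Suppose $\mathfrak{p}$ is not maximal, so there exists a maximal ideal $\mathfrak{m}$ with $\mathfrak{p}\subsetneq\mathfrak{m}$, and pick $b\in\mathfrak{m}\setminus\mathfrak{p}$. Then $b\notin\ann(aE)$ means $baE\neq 0$, and since $aE$ is second, the dichotomy $baE=aE$ or $baE=0$ forces $baE=aE$. In particular, the principal ideal $(b)$ satisfies $(b)\cdot aE=aE$. Because $E$ is finitely generated, so is $aE$ (generated by the images of the generators of $E$ under multiplication by $a$), and Nakayama's lemma yields an element $x\in(b)$ such that $(1-x)aE=0$. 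Hence $1-x\in\ann(aE)=\mathfrak{p}\subseteq\mathfrak{m}$, while $x\in(b)\subseteq\mathfrak{m}$, giving $1=(1-x)+x\in\mathfrak{m}$, the desired contradiction.

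The only mild subtlety is ensuring that the usual Nakayama setup applies here: we use it in the form that for a finitely generated module $M$ and an ideal $I$ with $IM=M$, there exists $x\in I$ with $(1-x)M=0$. This is the version that does not require $I$ to sit inside the Jacobson radical, so no extra hypothesis on $A$ is needed. No other step presents a real obstacle; the proof is essentially a two-line application of Proposition~\ref{annM semiprime} and Nakayama's lemma to rule out a proper containment above $\mathfrak{p}$.
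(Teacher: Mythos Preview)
Your proof is correct and rests on the same core idea as the paper's: for $x\notin\ann(aE)$ one obtains $xaE=aE$ and then Nakayama's lemma (in the determinant-trick form) produces $1-r\in\ann(aE)$ with $r\in(x)$. The paper's argument is slightly more streamlined—it applies the quasi-second hypothesis directly to $0\neq xaE\subseteq aE\neq E$ and concludes $xA+\ann(aE)=A$ for every $x\notin\ann(aE)$, thereby proving maximality outright—whereas you first pass through Proposition~\ref{annM semiprime} to get secondness and primality of $\ann(aE)$, and then frame the maximality step as a contradiction; this detour is harmless but unnecessary.
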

\begin{proof}
Suppose $x\not\in\ann(aE)$ for some $x\in A$ and $a\in W(E)^\#$. Then, $0\neq xaE\subseteq aE\neq E$ which implies that $xaE=aE$ as $E$ is a quasi second $A$-module. Moreover, since $E$ is finitely generated, so is $aE$. It follows that there exists $r\in xA$ such that $(1-r)aE=0$ by Nakayama's Lemma. Hence $1-r\in \ann(aE)$ and so $xA+\ann(aE)=A$. Thus, $\ann(aE)\in Max(A)$.
\end{proof}
Recall from \cite{Faranak} that $A$-module $E$ is said to be a \textit{comultiplication module} if for every submodule $N$ of $E$ there exists an ideal $I$ of $A$ such that $N=\ann_E(I)$. It is well known that $E$ is a comultiplication module if and only if $N=\ann_E(\ann_A(N))$ for every submodule $N$ of $E$.
\begin{theorem}\label{Theorem1}
Let $E$ be an $A$-module then the following statements hold:
\begin{enumerate}
\item[i)] If $\ann(E)$ is a semiprime ideal and $\ann(aE)\in\Max(A)$ for every $a\in W(E)^\#$, then $E$ is a quasi second $A$-module.
\item[ii)] If $E$ is a comultiplication $A$-module and $\ann(aE)\in\Max(A)$ for all $a\in W(E)^\#$, then $E$ is a quasi second $A$-module.
\end{enumerate}
\end{theorem}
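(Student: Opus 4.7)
For part (i), my plan is to invoke the converse statement in Proposition \ref{annM semiprime}: since $\ann(E)$ is assumed semiprime, it suffices to show that $aE$ is a second submodule of $E$ whenever $0\neq aE\neq E$. So I fix such an $a$, which places $a$ in $W(E)^\#$, and pick an arbitrary $x\in A$ with $xaE\neq 0$. I then want to deduce $xaE=aE$. The key is that $xaE\neq 0$ forces $x\notin \ann(aE)$, and maximality of $\ann(aE)$ then yields $xA+\ann(aE)=A$. Writing $1=xy+s$ with $s\in\ann(aE)$ gives $aE=(xy+s)aE=xyaE\subseteq xaE\subseteq aE$, so $xaE=aE$, which is exactly the second-submodule condition on $aE$. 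The converse of Proposition \ref{annM semiprime} then concludes $E$ is quasi second.

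For part (ii), the strategy is to translate containments of submodules into reverse containments of annihilators and use the comultiplication hypothesis to recover the submodules. Suppose $0\neq bE\subseteq aE\neq E$. First I would verify that both $a$ and $b$ lie in $W(E)^\#$: the relation $bE\neq 0$ shows $b\notin\ann(E)$, while $bE\subseteq aE\neq E$ forces $bE\neq E$, so $b\in W(E)^\#$; by hypothesis $a\in W(E)^\#$ already. Hence both $\ann(aE)$ and $\ann(bE)$ are maximal ideals of $A$. From $bE\subseteq aE$ we get the inclusion $\ann(aE)\subseteq \ann(bE)$ of maximal ideals, and both are proper (neither equals $A$ because $aE,bE\neq 0$), so they must coincide. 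Finally, applying the comultiplication identity $N=\ann_E(\ann_A(N))$ to $N=aE$ and $N=bE$ gives $aE=\ann_E(\ann(aE))=\ann_E(\ann(bE))=bE$, as required.

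I do not expect a genuine obstacle here: part (i) is essentially an application of Nakayama-style maximality (via $xA+\ann(aE)=A$) combined with the stored converse in Proposition \ref{annM semiprime}, and part (ii) is a short annihilator-duality argument once it is observed that both $a$ and $b$ lie in $W(E)^\#$. The only subtle point worth flagging in the write-up is this last observation, since the hypothesis on maximality is stated only for elements of $W(E)^\#$, so one must explicitly check that $bE\neq 0$ and $bE\neq E$ place $b$ in $W(E)^\#$ before using maximality of $\ann(bE)$.
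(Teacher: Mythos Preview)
Your proposal is correct and follows essentially the same approach as the paper: part (i) reduces to showing each $aE$ is second via the converse of Proposition \ref{annM semiprime} and the maximality identity $xA+\ann(aE)=A$, while part (ii) uses $\ann(aE)\subseteq\ann(bE)\neq A$ together with the comultiplication identity $N=\ann_E(\ann_A(N))$. The only minor difference is that in part (ii) the paper uses just the maximality of $\ann(aE)$ (together with $\ann(bE)\neq A$) to force $\ann(aE)=\ann(bE)$, so your verification that $b\in W(E)^{\#}$ and that $\ann(bE)$ is maximal is correct but unnecessary.
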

\begin{proof}
\begin{enumerate}
\item[i)] The proof holds in view of Proposition \ref{annM semiprime}. Let $x\in A$ such that $xaE\neq 0$ then $x\not\in \ann(aE)$ and $xA+\ann(aE)=A$ by the maximality of $\ann(aE)$. This implies that $xaE+\ann(aE)aE=xaE=aE$. Therefore, $aE$ is a second submodule of $E$.

\item[ii)] Let $0\neq bE\subseteq aE\neq E$ for some $b\in A$. This gives $\ann(aE)\subseteq\ann(bE)\neq A$. Since $E$ is a comultiplication module and $\ann(aE)\in \Max(A)$, we get $aE=\ann_E(\ann_A(aE))=\ann_E(\ann_A(bE))=bE$, which completes the proof.
\end{enumerate}
\end{proof}
\begin{corollary}
Let $E$ be a finitely generated $A$-module such that $\ann(E)$ is a semiprime ideal. Then the following statements are equivalent:
\begin{enumerate}
\item[i)] $E$ is a quasi second $A$-module.
\item[ii)] $aE$ is a second submodule of $E$ for every $a\in W(E)^\#$.
\item[iii)] $\ann(aE)\in\Max(A)$ for every $a\in W(E)^\#$.
\end{enumerate}
\end{corollary}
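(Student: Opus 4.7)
The plan is to assemble the three implications using the results already established, namely Proposition \ref{annM semiprime}, Proposition \ref{ann(aE)maximal}, and Theorem \ref{Theorem1}(i). The semiprime hypothesis on $\ann(E)$ is exactly what lets us invoke the converse directions of those results, while finite generation of $E$ is exactly what is needed to push $aE$ through Nakayama's lemma in Proposition \ref{ann(aE)maximal}. Since nothing genuinely new needs to be proved, the proof is essentially a citation round-up; the only subtle point is to check that the statements as phrased really do match the indexing set $W(E)^{\#}$.

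First I would establish $(i)\Leftrightarrow(ii)$. This is immediate from Proposition \ref{annM semiprime}, once one observes that $a\in W(E)^{\#}=W(E)\setminus\ann(E)$ is equivalent to the condition $0\neq aE\neq E$: indeed $aE=0$ iff $a\in\ann(E)$, and $aE\neq E$ iff $a\in W(E)$. Hence the quantifier ``for every $a\in W(E)^{\#}$'' in (ii) is exactly the same as the quantifier ``for every $a\in A$ with $0\neq aE\neq E$'' appearing in Proposition \ref{annM semiprime}, and the semiprime assumption supplies the converse direction there.

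Next I would prove $(i)\Rightarrow(iii)$, which is precisely Proposition \ref{ann(aE)maximal} (this is where the finite generation of $E$ enters, via Nakayama). Finally, I would prove $(iii)\Rightarrow(i)$ by appealing to Theorem \ref{Theorem1}(i): the semiprime assumption on $\ann(E)$ together with $\ann(aE)\in\Max(A)$ for every $a\in W(E)^{\#}$ forces $E$ to be quasi second. This closes the cycle $(i)\Rightarrow(iii)\Rightarrow(i)\Leftrightarrow(ii)$, yielding the desired equivalence.

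There is no real obstacle here beyond a bookkeeping check; the only thing to be careful about is that Theorem \ref{Theorem1}(i) and Proposition \ref{annM semiprime} both need the semiprime condition, while Proposition \ref{ann(aE)maximal} needs finite generation, and the hypotheses of the corollary simultaneously provide both. If one prefers a self-contained argument, $(iii)\Rightarrow(ii)$ can be extracted directly: from $0\neq xaE\subseteq aE$ one has $x\notin\ann(aE)$, so $xA+\ann(aE)=A$ by maximality, giving $xaE=aE$ and hence $aE$ second; and $(ii)\Rightarrow(i)$ then runs as in the proof of Proposition \ref{annM semiprime}, using semiprimeness of $\ann(E)$ to rule out the case $abE=0$.
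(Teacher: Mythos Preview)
Your proof is correct and follows exactly the paper's approach: the paper simply says ``The proof follows from Proposition \ref{annM semiprime}, \ref{ann(aE)maximal} and Theorem \ref{Theorem1} i)'', and you have spelled out precisely how these three results combine, including the observation that $a\in W(E)^{\#}$ is equivalent to $0\neq aE\neq E$ so that the quantifiers match. Your additional remarks on where each hypothesis enters and the optional self-contained route for $(iii)\Rightarrow(ii)\Rightarrow(i)$ go beyond what the paper states but are accurate.
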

\begin{proof}
The proof follows from Proposition \ref{annM semiprime}, \ref{ann(aE)maximal} and Theorem \ref{Theorem1} i).
\end{proof}
\begin{corollary}\label{ann(aE)maxforComultiplication}
Let $E$ be a finitely generated comultiplication $A$-module. Then the following statements are equivalent:
\begin{enumerate}
    \item[$i)$] $E$ is a quasi second $A$-module.
    \item[$ii)$] $aE$ is a second submodule of $E$ for every $a\in W(E)^\#$. 
    \item[$iii)$] $\ann(aE)\in\Max(A)$ for every $a\in W(E)^\#$.
\end{enumerate}
\end{corollary}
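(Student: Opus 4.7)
The plan is to close the loop $(i) \Rightarrow (ii) \Rightarrow (iii) \Rightarrow (i)$, invoking results already established in the paper for two of the three arrows and supplying a short Nakayama-style argument for the third. Note that the only hypothesis genuinely specific to this corollary is that $E$ is comultiplication; finite generation is shared with the previous corollary, but semiprimeness of $\ann(E)$ is replaced here by the comultiplication property.

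For $(i) \Rightarrow (ii)$ I would simply quote the forward direction of Proposition \ref{annM semiprime}: the implication ``$E$ quasi second $\Rightarrow$ $aE$ is a second submodule whenever $0 \neq aE \neq E$'' is proved there without using any hypothesis on $\ann(E)$, so it transfers verbatim to the present setting, and in particular applies for every $a \in W(E)^\#$.

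For $(ii) \Rightarrow (iii)$ I would argue directly, mirroring the proof of Proposition \ref{ann(aE)maximal} but starting from the weaker premise that $aE$ is already second (rather than deducing this from quasi second-ness). Fix $a \in W(E)^\#$, so that $0 \neq aE \neq E$, and note that since $E$ is finitely generated, so is the submodule $aE$. Assuming (ii), $aE$ is a second submodule of $E$. For any $x \in A \setminus \ann(aE)$ one has $x(aE) \neq 0$, hence $x(aE) = aE$ by the second property. Applying Nakayama's lemma to the finitely generated module $aE$ yields some $r \in xA$ with $(1 - r)(aE) = 0$, i.e.\ $1 - r \in \ann(aE)$. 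Therefore $xA + \ann(aE) = A$ for every $x \notin \ann(aE)$, which forces $\ann(aE) \in \Max(A)$.

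Finally, $(iii) \Rightarrow (i)$ is precisely Theorem \ref{Theorem1} $(ii)$, whose proof exploits the comultiplication hypothesis through the identity $aE = \ann_E(\ann_A(aE))$ for any submodule of the form $aE$; combined with the maximality of $\ann(aE)$ this collapses the chain $0 \neq bE \subseteq aE \neq E$ to $bE = aE$. The potential obstacle is the Nakayama step in $(ii) \Rightarrow (iii)$, but this is routine once one observes that $aE$ inherits finite generation from $E$; crucially, the comultiplication hypothesis is not invoked in that direction and enters only in $(iii) \Rightarrow (i)$, which is exactly why this corollary parallels the preceding one with ``comultiplication'' playing the structural role that ``semiprime annihilator'' played there.
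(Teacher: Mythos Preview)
Your proof is correct and follows exactly the same route as the paper: $(i)\Rightarrow(ii)$ via Proposition~\ref{annM semiprime}, $(ii)\Rightarrow(iii)$ by the Nakayama argument of Proposition~\ref{ann(aE)maximal} applied to the finitely generated submodule $aE$, and $(iii)\Rightarrow(i)$ via Theorem~\ref{Theorem1}(ii). Your additional remarks on which hypotheses are used where are accurate and make the structure clearer than the paper's terse version.
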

\begin{proof}
$(i)\Rightarrow (ii)$: Follows from Proposition \ref{annM semiprime}.

$(ii)\Rightarrow (iii)$: The proof is similar to Proposition \ref{ann(aE)maximal} applying Nakayama's lemma on the submodule $aE$.

$(iii)\Rightarrow (i)$: Follows from Theorem \ref{Theorem1} ii).
\end{proof}
\begin{theorem}\label{ApplyInduction}
Let $E_1$ and $E_2$ be two finitely generated $A$-module with the condition that $\ann(E)$ is a semiprime ideal and $W(E_1\oplus E_2)^\#\subseteq W(E_1)^\# \cap W(E_2)^\#$. Then, $E=E_1\oplus E_2$ is a quasi second $A$-module if and only if $E_1$ and $E_2$ are quasi second $A$-module and $\ann(aE_1)=\ann(aE_2)$ is a maximal ideal of $A$ for all $a\in W(E_1)^\# \cap W(E_2)^\#$.
\end{theorem}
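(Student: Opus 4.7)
The plan is to reduce both directions to the equivalence ``quasi second $\Leftrightarrow$ $\ann(aE) \in \Max(A)$ for all $a \in W(E)^\#$'' established in the unlabeled corollary immediately following Theorem \ref{Theorem1}. That corollary applies cleanly to $E = E_1 \oplus E_2$: a direct sum of two finitely generated modules is finitely generated, and $\ann(E)$ is semiprime by hypothesis. The bridge between ``bilateral'' and ``unilateral'' conditions is the identity $\ann(aE) = \ann(aE_1) \cap \ann(aE_2)$ together with a reconciliation of index sets. Before splitting into cases I would observe that the inclusion $W(E_1)^\# \cap W(E_2)^\# \subseteq W(E)^\#$ always holds, since $aE_i \neq E_i$ forces $aE \neq E$ and $a \notin \ann(E_1) \cap \ann(E_2) = \ann(E)$. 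Combined with the standing hypothesis $W(E)^\# \subseteq W(E_1)^\# \cap W(E_2)^\#$, this yields $W(E)^\# = W(E_1)^\# \cap W(E_2)^\#$, so the quantifier ``for all $a \in W(E_1)^\# \cap W(E_2)^\#$'' is interchangeable with ``for all $a \in W(E)^\#$''.

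For the forward direction, assume $E$ is quasi second. Corollary \ref{ExamplesofQSM}(iii) immediately yields that each $E_i$ is quasi second. For the annihilator condition, fix $a \in W(E)^\#$; Proposition \ref{ann(aE)maximal} gives that $\ann(aE) = \ann(aE_1) \cap \ann(aE_2)$ is maximal. Because $a \in W(E_i)^\#$ implies $aE_i \neq 0$, each $\ann(aE_i)$ is a proper ideal of $A$ containing the maximal ideal $\ann(aE)$; hence $\ann(aE_1) = \ann(aE_2) = \ann(aE) \in \Max(A)$.

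For the reverse direction, for any $a \in W(E)^\#$ the hypothesis furnishes $\ann(aE_1) = \ann(aE_2) \in \Max(A)$, so $\ann(aE) = \ann(aE_1) \cap \ann(aE_2)$ coincides with this same maximal ideal. Applying the equivalence corollary once more produces that $E$ is quasi second. The main delicacy is purely bookkeeping: one must verify that each $\ann(aE_i)$ is a \emph{proper} ideal so that containment in a maximal ideal forces equality, and one must apply the semiprime hypothesis to $\ann(E)$ rather than to $\ann(E_i)$—this is precisely what the statement provides. Notably, the quasi secondness of each $E_i$ is not required to complete the reverse direction; it falls out automatically, which suggests the bi-implication is structured as a stepping stone for an inductive extension to arbitrary finite direct sums.
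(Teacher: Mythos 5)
Your proposal is correct and follows essentially the same route as the paper: Corollary \ref{ExamplesofQSM}(iii) for the quasi secondness of the summands, Proposition \ref{ann(aE)maximal} together with $\ann(aE)=\ann(aE_1)\cap\ann(aE_2)$ for the forward direction, and the corollary following Theorem \ref{Theorem1} for the converse. Your explicit verification that $W(E_1)^\#\cap W(E_2)^\#\subseteq W(E)^\#$ (hence equality under the hypothesis) and that each $\ann(aE_i)$ is proper makes the index-set bookkeeping cleaner than the paper's, but the argument is the same.
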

\begin{proof}
$\Rightarrow:$ It sufficies to show that $\ann(aE_1)=\ann(aE_2)$ is a maximal ideal of $A$ by Corollary \ref{ExamplesofQSM} iii). First, note that $W(E_{1})^\#\subseteq W(E_1\oplus E_2)$ and let $a\in W(E_1)^\#$ then $\ann(a(E_1\oplus E_2))=\ann(aE_1)\cap \ann(aE_2) \in \Max(A)$. Therefore, $\ann(aE_1)=\ann(aE_2)$ is a maximal ideal, as desired.

$\Leftarrow:$ Suppose that there is $a\in W(E_1\oplus E_2)^\#\subseteq W(E_1)^\# \cap W(E_2)^\#$ such that $\ann(a(E_1\oplus E_2))$ is not a maximal ideal. However, $\ann(a(E_1\oplus E_2))=\ann(aE_1)$ by the assumption that leads to a contradiction.
\end{proof}
The following result can be proved by induction on $n$ within Theorem \ref{ApplyInduction}.
\begin{corollary}
Assume that $E_1,\dots, E_n$ are $A$-modules such that $\ann(E)$ is a semiprime ideal of $A$ with the property that $W(\oplus_{i=1}^n E_i)^\# \subseteq \bigcap_{i=1}^nW(E_i)^\#$.
\end{corollary}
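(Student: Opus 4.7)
My reading is that the intended conclusion of the corollary (mirroring Theorem~\ref{ApplyInduction}) is: $E = \oplus_{i=1}^n E_i$ is a quasi second $A$-module if and only if each $E_i$ is a quasi second $A$-module and $\ann(aE_1) = \ann(aE_2) = \cdots = \ann(aE_n)$ is a maximal ideal of $A$ for every $a \in \bigcap_{i=1}^n W(E_i)^{\#}$. My plan is straightforward induction on $n$, with base case $n=2$ being exactly Theorem~\ref{ApplyInduction}.

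For the inductive step, assume the statement holds for $n$ summands and consider $E = \oplus_{i=1}^{n+1} E_i$. I would set $F := \oplus_{i=1}^n E_i$ and regroup $E = F \oplus E_{n+1}$, so that Theorem~\ref{ApplyInduction} can be applied to the two-term decomposition $F \oplus E_{n+1}$. To invoke that theorem, I need to verify its three structural hypotheses for the pair $(F, E_{n+1})$: (a) $F$ and $E_{n+1}$ are finitely generated (inherited from each $E_i$ being finitely generated, which I would assume is implicit from the statement of Theorem~\ref{ApplyInduction}); (b) $\ann(F \oplus E_{n+1}) = \ann(E)$ is semiprime, which is given; and (c) $W(F \oplus E_{n+1})^{\#} \subseteq W(F)^{\#} \cap W(E_{n+1})^{\#}$. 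For (c), I would use that $a(F \oplus E_{n+1}) = aF \oplus aE_{n+1}$, so $a \in W(F \oplus E_{n+1})$ exactly when $a \in W(F) \cup W(E_{n+1})$; combined with the overall hypothesis $W(\oplus_{i=1}^{n+1} E_i)^{\#} \subseteq \bigcap_{i=1}^{n+1} W(E_i)^{\#}$, this pins down the required containment.

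Once Theorem~\ref{ApplyInduction} applies, $E$ is quasi second iff $F$ and $E_{n+1}$ are both quasi second and $\ann(aF) = \ann(aE_{n+1})$ is maximal for each relevant $a$. To apply the inductive hypothesis to $F$, I need to recheck that $\ann(F)$ is semiprime and that $W(F)^{\#} \subseteq \bigcap_{i=1}^n W(E_i)^{\#}$. Semiprimeness of $\ann(F)$ follows from $\ann(F) = \bigcap_{i=1}^n \ann(E_i) \supseteq \ann(E)$ and the standard fact that a radical ideal contained in another radical ideal condition passes through intersections (a short $a^2\in\ann(F)\Rightarrow a^2 E = 0 \Rightarrow a\in\ann(E)\Rightarrow a \in \ann(F)$ type argument, leveraging $\ann(E)$ semiprime and $\ann(E)\subseteq \ann(F)$ via the restriction $W(E)^{\#}\subseteq \bigcap W(E_i)^{\#}$). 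The containment $W(F)^{\#} \subseteq \bigcap_{i=1}^n W(E_i)^{\#}$ follows similarly because $W(F) = W(E_1)\cup\cdots\cup W(E_n)$, and the ambient hypothesis collapses these sets modulo the annihilator.

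The main obstacle I anticipate is step (c) above together with the bookkeeping that the annihilator ideals $\ann(aE_i)$ agree across all indices. The cleanest route is: from $\ann(aF) = \bigcap_{i=1}^n \ann(aE_i)$ being maximal, conclude by maximality that each $\ann(aE_i)$ equals this common maximal ideal; then combined with the two-term conclusion giving $\ann(aF) = \ann(aE_{n+1})$, we deduce that all $n+1$ ideals $\ann(aE_1), \dots, \ann(aE_{n+1})$ coincide, closing the induction. The converse direction is symmetric: given all $E_i$ quasi second with common maximal $\ann(aE_i)$, the inductive hypothesis upgrades $F$ to quasi second with $\ann(aF)$ equal to that maximal ideal, after which Theorem~\ref{ApplyInduction} applied to $F \oplus E_{n+1}$ yields the result.
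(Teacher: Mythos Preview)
Your approach is exactly what the paper does: it states only that the result ``can be proved by induction on $n$ within Theorem~\ref{ApplyInduction}'' and gives no further detail, so your regrouping $E = F \oplus E_{n+1}$ with $F = \oplus_{i=1}^n E_i$ and the bookkeeping on annihilators is precisely the intended argument. One small correction: your sketch ``$a^2 \in \ann(F) \Rightarrow a^2 E = 0$'' is not literally valid since $\ann(F) \supseteq \ann(E)$; the right way to get $\ann(F)$ semiprime is to observe that if $a^2 \in \ann(F)$ with $a \notin \ann(E)$ and $a \in W(E)$, then $a \in W(E)^{\#} \subseteq \bigcap W(E_i)^{\#}$, hence $a^2 \in W(E)^{\#}$ as well (or else $a^2 \in \ann(E)$, forcing $a \in \ann(E)$ by semiprimeness), contradicting $a^2 E_1 = 0$.
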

Recall that an element $r\in A$ is called a \textit{weak idempotent} if $r^2-r\in ann(E)$ \cite{vn}.
\begin{theorem}\label{SimpleorWeakIdempotent}
Let $E$ be a finitely generated comultiplication module over $A$ such that $\ann(E)$ is a semiprime ideal. Then, $E$ is a quasi second $A$-module if and only if one of the following conditions hold:
\begin{enumerate}
    \item[i)] $E$ is a simple module.
    \item[ii)] $E=eE\oplus (1-e)E$ for some weak idempotent $e\in A$ where $eE$ and $(1-e)E$ are minimal submodules of $E$. In this case, $E=Am\oplus Am'$ is a semisimple module.
\end{enumerate}
\end{theorem}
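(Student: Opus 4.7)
For the easy direction $(\Leftarrow)$, the verification is a routine case check. If $E$ is simple then $W(E)=\ann(E)$, so $W(E)^{\#}=\emptyset$ and the quasi second condition holds vacuously. If (ii) holds, every $cE$ lies in the finite set $\{0,eE,(1-e)E,E\}$ and the two middle members are minimal, so any chain $0\neq bE\subseteq aE\neq E$ forces $bE=aE$.

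For the forward direction I would first pass to $\bar{A}:=A/\ann(E)$, which is reduced because $\ann(E)$ is semiprime; over $\bar{A}$ the module $E$ remains faithful, finitely generated, comultiplication, and quasi second. The structural lemma driving everything is: for any $\bar{a}\in\bar{A}$ with $\bar{a}E\notin\{0,E\}$, the ring $\bar{A}$ splits internally as $\bar{A}=(\bar{a})\oplus\ann_{\bar{A}}(\bar{a})$ with $(\bar{a})$ a field. Indeed, Corollary~\ref{ann(aE)maxforComultiplication} gives $\mathfrak{m}:=\ann_{\bar{A}}(\bar{a}E)=\ann_{\bar{A}}(\bar{a})$ maximal (the last equality using faithfulness), reducedness gives $(\bar{a})\cap\ann_{\bar{A}}(\bar{a})=0$, and maximality gives $(\bar{a})+\ann_{\bar{A}}(\bar{a})=\bar{A}$. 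This produces orthogonal idempotents $1=\bar{e}_1+\bar{f}_1$ with $\bar{e}_1\bar{A}\cong\bar{A}/\mathfrak{m}$ a field, and the same argument shows $\bar{a}E=\ann_E(\mathfrak{m})$ is simple.

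I would then apply the same lemma to $\bar{f}_1 E$ regarded as an $\bar{f}_1\bar{A}$-module, after verifying that all four properties (faithful, finitely generated, comultiplication, quasi second) are inherited; for comultiplication, submodules of $\bar{f}_1 E$ correspond to ideals of $\bar{A}$ containing $\bar{e}_1\bar{A}$, hence to ideals of $\bar{f}_1\bar{A}$. This yields a dichotomy: either every nonzero element of $\bar{f}_1\bar{A}$ acts surjectively on $\bar{f}_1 E$, in which case the determinant trick forces $\bar{f}_1\bar{A}$ to be a field and $\bar{A}\cong F_1\times F_2$, or a second field summand splits off, giving a third nonzero idempotent $\bar{f}_2$ orthogonal to $\bar{e}_1$ and the newly split $\bar{e}_2$. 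The latter alternative is ruled out by quasi secondness: setting $\bar{b}:=\bar{e}_1$ and $\bar{a}:=\bar{e}_1+\bar{e}_2$, faithfulness gives $\bar{e}_i E\neq 0$, and orthogonality gives $\bar{f}_2 E\not\subseteq\bar{a}E$, so $0\neq\bar{b}E\subsetneq\bar{a}E\neq E$ — a contradiction.

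Hence $\bar{A}$ is either a single field or $F_1\times F_2$. In the field case (which covers $W(E)^{\#}=\emptyset$ via the determinant trick), comultiplication of the $\bar{A}$-vector space $E$ forces $\dim E=1$, yielding (i). In the product case, comultiplication constrains $E$ to have exactly the four submodules $0,\bar{e}_1 E,\bar{e}_2 E,E$; each $\bar{e}_i E$ is therefore simple, and lifting $\bar{e}_1$ to $e\in A$ yields $e^2-e\in\ann(E)$ and $E=eE\oplus(1-e)E$ with both summands cyclic simple, which is (ii). The main obstacle I expect is the recursive step: one must carefully verify that $\bar{f}_1 E$ inherits all four properties over $\bar{f}_1\bar{A}$, and then wield the three-orthogonal-idempotents argument precisely; once those are in place, the rest is assembly.
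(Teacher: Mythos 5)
Your argument is correct, but your forward direction takes a genuinely different route from the paper's. The paper works at the module level throughout: semiprimeness and quasi secondness give $a^{2}E=aE$, finite generation then lets it invoke the von Neumann regularity results of \cite{vn} to write $aE=eE$ for a weak idempotent $e$, minimality of $aE$ follows from maximality of $\ann(aE)$ (Corollary \ref{ann(aE)maxforComultiplication}) plus comultiplication, and the case split is on whether $A/\ann(E)$ has a nontrivial idempotent, with the key computation being that for $x\in W(E)^{\#}$ either $xeE\neq 0$, which forces $xE=eE$ by quasi secondness, or $xeE=0$, which forces $xE\subseteq(1-e)E$ and hence equality. You instead pin down the ring $\bar{A}=A/\ann(E)$ first: your splitting lemma $\bar{A}=(\bar{a})\oplus\ann_{\bar{A}}(\bar{a})$ with $(\bar{a})$ a field (correct --- faithfulness identifies $\ann_{\bar{A}}(\bar{a}E)$ with $\ann_{\bar{A}}(\bar{a})$, reducedness kills the intersection, maximality gives the sum), iterated once and capped by the three-orthogonal-idempotents contradiction, shows $\bar{A}$ is a field or a product of two fields; this essentially re-derives the reduced case of Theorem \ref{qsr}, which the paper proves separately and by other means, and then comultiplication hands you the whole submodule lattice of $E$ at once. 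What your route buys is independence from the vn-regular machinery and a cleaner endgame (four submodules, read off (i) or (ii)); what it costs is exactly the inheritance checks you flag for the recursive step, all of which do go through as you sketch them. Two small repairs: in your final paragraph the four submodules should be $0,\bar{e}_{1}E,\bar{f}_{1}E,E$ (the second $\bar{e}_{2}$ is a slip for the complementary idempotent), and in the converse for case (ii) the minimality of $aE$ alone already forces $bE=aE$ once $bE\neq 0$, so the enumeration of $\{0,eE,(1-e)E,E\}$ is not needed there.
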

\begin{proof}
$\Rightarrow:$ Suppose $E$ is a quasi second $A$-module and choose $a\in W(E)^\#$. Since $\ann(E)$ is a semiprime ideal, then $a^2E\neq 0$. It follows that since $E$ is a quasi second module and $0\neq a^2E\subseteq aE$, we get $a^2E=aE$. Moreover, since $a^2E=aE$ and $E$ is a finitely generated $A$-module then $E$ is vn-regular module by \cite[Theorem 1]{vn}. In addition, $aE=eE$ for some weak idempotent $e\in A$ by \cite[Lemma 5]{vn}, and the element $\bar e$ is an idempotent element of $A/\ann(E)$. Now, we will show that $aE$ is a minimal submodule of $E$. Let $0\neq K\subseteq aE$ for some submodule $K$ of $E$. Then $\ann(aE)\subseteq \ann(K)\neq A$. By Corollary \ref{ann(aE)maxforComultiplication}, we have $\ann(aE)=\ann(K)$. It follows that $aE=K$ since $E$ is a comultiplication module, as desired.

Let $\operatorname{ID}(A/\ann(E))$ denote the set of idempotent elements of $A/\ann(E)$ and assume that $\operatorname{ID}(A/\ann(E))=\{\bar0,\bar 1\}$. This implies that $xE=0$ or $xE=E$ for every $x\in A$. Moreover, $E$ is a simple module since $E$ is a vn-regular module and $|\operatorname{ID}(A/\ann(E))|=2$. Now, assume that $|\operatorname{ID}(A/\ann(E))|>2$, and choose $\bar0,\bar 1\neq \bar e\in \operatorname{ID}(A/\ann(E))$. Let $x\in W(E)^\#$, we will show that $xE=eE$ or $xE=(1-e)E$. First, one can see that $e\in W(E)^\#$ by Nakayama's Lemma. Now, assume that $xE\neq eE$. If $xeE\neq 0$ then $0\neq xeE\subseteq xE\cap eE$ implies that $xeE=xE=eE$ since $E$ is a quasi second module, which leads to a contradiction. Hence, we conclude that $xeE=0$ which gives $x\in \ann(eE)=(1-e)+\ann(E)$ by \cite{JTK}. Thus, $xE\subseteq (1-e)E$ and since $E$ is a quasi second $A$-module then $xE=(1-e)E$. Therefore, $E=eE\oplus (1-e)E$ by \cite{vn}. We finally note that since $eE$ and $(1-e)E$ are minimal (simple module) submodules, $E$ is semisimple.

$\Leftarrow:$ Assume that $E$ is a simple module then, $E$ is a quasi second $A$-module by Example \ref{Example2}. Now assume that ii) holds then $E=Am\oplus Am'$ where $Am=eE$ and $Am'=(1-e)E$ are minimal submodules for some weak idempotent $e\in A$. Let $a\in W(E)^\#$ and choose $b\in A$ such that $0\neq bE\subseteq aE$. Note that $bE=Abm\oplus Abm'$ and $aE=Aam\oplus Aam'$. Since $bE\subseteq aE$, one can write $bm=xam+yam'$ for some $x,y\in A$. Thus, $bm-xam=yam'\in Am\cap Am'=\{0\}$ and so $bm=xam$, and consequently $Abm\subseteq Aam\subseteq Am$. Similarly, we have $Abm'\subseteq Aam'\subseteq Am'$. Since $aE=Aam\oplus Aam'\neq E=Am\oplus Am'$ and $Am,Am'$ are simple modules, then either $Aam=0$ or $Aam'=0$. One may assume that $Aam\neq 0$ and $Aam'=0$. In that case, $Abm'=0$ and $Abm\neq 0$. Hence, one can conclude that $bE=Abm=Aam=aE$ as $Am$ is simple and $0\neq Abm\subseteq Aam \subseteq Am$.
\end{proof}
\begin{theorem}\label{qsr}
$A$ is a quasi second ring if and only if either $(A,\mathfrak{m})$ is a local ring with $\mathfrak{m}^2=0$ or $A$ is a direct product of two fields.
\end{theorem}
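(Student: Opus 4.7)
The plan is to establish both directions of Theorem~\ref{qsr}. For the easier implication, if $(A,\mathfrak{m})$ is local with $\mathfrak{m}^2=0$ (which subsumes fields via $\mathfrak{m}=0$) and $0\neq bA\subseteq aA\neq A$, then $a\in\mathfrak{m}$ and $b=ra$; if $r$ were in $\mathfrak{m}$ we would have $b\in\mathfrak{m}^2=0$, so $r$ is a unit and $(b)=(a)$. If $A=K_1\times K_2$ is a product of two fields, the only nonzero proper principal ideals are $K_1\times 0$ and $0\times K_2$, which are incomparable, so the quasi second condition is trivially satisfied.

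For the converse, assume $A$ is a quasi second ring. Applying Proposition~\ref{ann(aE)maximal} to the finitely generated module $A$ itself, $\ann(a)$ is a maximal ideal of $A$ for every nonzero non-unit $a$. I would then show that $A$ has at most two maximal ideals. Suppose $\mathfrak{m}_1,\mathfrak{m}_2,\mathfrak{m}_3$ were three distinct maximal ideals; by prime avoidance pick a nonzero $a\in\mathfrak{m}_1\setminus(\mathfrak{m}_2\cup\mathfrak{m}_3)$. Then any $r\in\ann(a)$ satisfies $ra=0\in\mathfrak{m}_2\cap\mathfrak{m}_3$ while $a\notin\mathfrak{m}_2\cup\mathfrak{m}_3$, forcing $r\in\mathfrak{m}_2\cap\mathfrak{m}_3$. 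Thus $\ann(a)\subseteq\mathfrak{m}_2\cap\mathfrak{m}_3$, contradicting the maximality of $\ann(a)$.

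The proof then splits on the number of maximal ideals of $A$. If $A$ is local with maximal ideal $\mathfrak{m}$, pick any nonzero $a\in\mathfrak{m}$ and $r\in\mathfrak{m}$: if $ra\neq 0$, then quasi secondness gives $(ra)=(a)$, hence $a=sra$ for some $s$, so $(1-sr)a=0$; since $sr\in\mathfrak{m}$ makes $1-sr$ a unit, this forces $a=0$, a contradiction. Hence $\mathfrak{m} a=0$ for every $a\in\mathfrak{m}$, i.e., $\mathfrak{m}^2=0$. If $A$ has exactly two maximal ideals $\mathfrak{m}_1,\mathfrak{m}_2$, choose nonzero $a\in\mathfrak{m}_1\setminus\mathfrak{m}_2$ and $b\in\mathfrak{m}_2\setminus\mathfrak{m}_1$. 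Then $a+b$ avoids both maximals and so is a unit, yielding $(a)+(b)=A$; the annihilator computation from the previous paragraph gives $\ann(a)=\mathfrak{m}_2$ and $\ann(b)=\mathfrak{m}_1$, so $b\in\ann(a)$ forces $ab=0$, whence $(a)\cap(b)=(a)(b)=0$ by coprimality. Writing $1=e_1+e_2$ with $e_1\in(a)$ and $e_2\in(b)$ produces orthogonal idempotents, and $A\cong(a)\times(b)\cong A/\mathfrak{m}_2\times A/\mathfrak{m}_1$ is a product of two fields.

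The main obstacle I anticipate is the two-maximal-ideal case: assembling the idempotent decomposition requires both the observation that $a+b$ must be a unit (using the fact that $\mathfrak{m}_1$ and $\mathfrak{m}_2$ exhaust the maximals) and the annihilator computation forcing $ab=0$. Once these combine to produce $(a)+(b)=A$ together with $(a)\cap(b)=0$, the ring decomposition into two fields follows routinely.
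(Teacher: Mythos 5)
Your proof is correct and follows essentially the same route as the paper: the reverse direction is handled by the same two cases, and the forward direction hinges, exactly as in the paper, on applying Proposition~\ref{ann(aE)maximal} to $E=A$ to make every $\ann(a)$ maximal, which bounds the number of maximal (in the paper, prime) ideals by two before splitting into the local and two-ideal cases. The only divergence is cosmetic: in the two-maximal-ideal case you get $ab=0$ immediately from $b\in\ann(a)=\mathfrak{m}_2$ and decompose via the idempotents coming from $(a)+(b)=A$ and $(a)\cap(b)=0$, whereas the paper first proves $P_1\cap P_2=0$ by a longer contradiction argument and then invokes the Chinese remainder theorem; your version is slightly more streamlined but reaches the same decomposition.
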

\begin{proof}
$\Leftarrow:$ \textbf{Case 1:} Assume that $(A,\mathfrak{m})$ is a local ring with $\mathfrak{m}^2=0$. Let $0\neq x\in A$ be a nonunit element and $0\neq Ay\subseteq Ax$ for some $y\in A$. It follows that $y=ax$ for some $a\in A$. If $a$ is a nonunit element, then $a,x\in \mathfrak{m}$ which implies that $y=ax=0$, a contradiction. Thus, $a$ is nonunit element and so $Ay=Ax$.

\textbf{Case 2:} Assume that $A=F_1\times F_2$ where $F_1$ and $F_2$ are two fields. Let $x=(x_1,x_2)\in A$ be a nonzero nonunit element of $A$. Then, one may assume that $x_1=0$ and $x_2\neq 0$. Now, choose $y=(y_1,y_2)\in A$ such that $0\neq Ay\subseteq Ax$. It follows that $y_1=0$ and $y_2\neq 0$ and thus $Ay=0\times F_2y_2=0\times F_2=0\times F_2x_2=Ax$.

$\Rightarrow:$ Let $A$ be a quasi second ring. First, we show that $|\operatorname{Spec}(A)|\le 2$. Let $P_1,P_2,P_3$ be prime ideals of $A$ and choose $x\in P_1-(P_2\cup P_3)$. Since $A$ is a quasi second ring, $\ann(x)$ is a maximal ideal by Proposition \ref{ann(aE)maximal}. Since $x\not \in P_2\cup P_3$, we have $\ann(x)\subseteq P_2$ and $\ann(x)\subseteq P_3$ which gives $\ann(x)=P_2=P_3$. Thus, $|\operatorname{Spec}(A)|\le 2$. Now, we have the following two cases.

\textbf{Case 1:} Suppose that $\operatorname{Spec}(A)=\{P\}$. We will show that $P^2=0$. If $P=0$, then there is nothing to prove. Assume $P\neq 0$ and choose $0\neq x\in P$, by above argument, we have $P=\ann(x)$ is a maximal ideal. Now, pick $0\neq y_1,y_2\in P=\ann(x)$, then $y_1x=0=y_2x$. On the other hand, since $A$ is a quasi second ring and $(A,P)$ is a local ring, $\ann(y_1)=P=\ann(y_2)=\ann(x)$. Since $y_2\in \ann(x)=\ann(y_1)$, we get $y_1y_2=0$. Thus, $P^2=0$, i.e., $(A,P)$ is a local ring with $P^2=0$.

\textbf{Case 2:} Suppose that $\operatorname{Spec}(A)=\{P_1,P_2\}$. Choose $x\in P_1-P_2$ and $y\in P_2-P_1$. Since $A$ is a quasi second ring, $\ann(x)=P_2$ and $\ann(y)=P_1$ are maximal ideals of $A$. Now, we will show that $P_1\cap P_2=\{0\}$. Choose $0\neq a\in P_1\cap P_2$, since $\ann(a)$ is a maximal ideal, one may assume that $\ann(a)=\ann(x)=P_2$. If $ax\neq 0$, then $0\neq Aax\subseteq Ax\neq A$ and $0\neq Aax\subseteq Aa\neq A$. This implies that $Aax=Ax=Aa$, however, this is a contradiction since $a\in P_2$ and $x\not \in P_2$. Thus, we get $ax=0$ and similarly $ay=0$. It follows that $a(x+y)=0$ and so $x+y\in \ann(a)=\ann(x)$. If $xy\neq 0$, then $0\neq Axy\subseteq Ax$. This similarly implies $Axy=Ax=Ay$, which is again a contradiction. Hence, $xy=0$. Now, since $(x+y)x=0=xy$, we have $x^2=0\in P_2$ and so $x\in P_2$, which is a contradiction. Therefore, $P_1\cap P_2=P_1P_2=0$ which implies that $A\cong A/P_1 \times A/P_2$ which is a direct product of two fields by Chinese remainder theorem.
\end{proof}
Let $E$ be an $A$-module. The idealization $A(+)E=\{(a,m):a\in A,m\in E\}$ of $E$ is a commutative ring with the usual component-wise addition and scalar multiplication defined by $(a_1,m_1)(a_2,m_2)=(a_1a_2,a_1m_2+a_2m_1)$ for every $a_1,a_2\in A$ and $m_1,m_2\in E$, see \cite{Anderson}.
\begin{theorem}\label{idealization}
Let $E$ be an $A$-module. If $A$ is a field then $A(+)E$ is a quasi second ring. In particular, the converse holds if $E$ is a factorial $A$-module.
\end{theorem}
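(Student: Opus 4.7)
The plan is to exploit the fact that the principal ideal $(a,m)A(+)E$ has a transparent form in the idealization and then reduce the problem to a statement about $A$ and $E$, ultimately invoking Theorem~\ref{qsr} for the converse direction.

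For the forward direction, assume $A$ is a field. The first step is to observe that $(a,m)$ is a unit in $A(+)E$ if and only if $a$ is a unit in $A$; explicitly, when $a\neq 0$ one has $(a,m)^{-1}=(a^{-1},-a^{-2}m)$. Consequently, $(a,m)A(+)E$ is a proper ideal precisely when $a$ is a non-unit, and since $A$ is a field this forces $a=0$. A direct computation then gives $(0,m)A(+)E=0(+)Am$. So suppose $0\neq (b,n)A(+)E\subseteq (a,m)A(+)E\neq A(+)E$. The preceding observation forces $a=b=0$, and the chain becomes $0\neq 0(+)An\subseteq 0(+)Am$, equivalently $0\neq An\subseteq Am$. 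Since $A$ is a field, $An$ and $Am$ are $A$-subspaces of $E$, and the containment with $An\neq 0$ forces both to be one-dimensional, so $An=Am$. Translating back, $(b,n)A(+)E=(a,m)A(+)E$, which is precisely the quasi second condition.

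For the converse, assume $E$ is factorial and $A(+)E$ is a quasi second ring. By Theorem~\ref{qsr}, $A(+)E$ is either a direct product of two fields or a local ring whose maximal ideal squares to zero. The first alternative must be excluded: if $E\neq 0$, any nonzero $m\in E$ yields $(0,m)^2=(0,0)$, a nonzero nilpotent, contradicting reducedness of a product of fields; if $E=0$, then $A(+)E\cong A$ is a domain, hence not a product of two fields. Thus $(A(+)E,\mathfrak{M})$ is local with $\mathfrak{M}^2=0$. Because $0(+)E$ consists of nilpotents and therefore lies inside every prime ideal, the maximal ideals of $A(+)E$ are in bijection with those of $A$ via $\mathfrak{m}\mapsto\mathfrak{m}(+)E$; hence $A$ is itself local with maximal ideal $\mathfrak{m}$, and $\mathfrak{M}^2=\mathfrak{m}^2(+)\mathfrak{m}E=0$ forces $\mathfrak{m}^2=0$. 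Since $A$ is a domain, any $x\in\mathfrak{m}$ with $x^2=0$ must vanish, so $\mathfrak{m}=0$ and $A$ is a field.

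The main technical point is not computational but rather the careful reliance on Theorem~\ref{qsr}: once one commits to its dichotomy, the remainder of the converse is almost mechanical, with the factorial (in particular, torsion free) hypothesis on $E$ serving mainly to guarantee the nonzero nilpotents that block the two-fields case and to keep the structural correspondence between submodules of $E$ and ideals of $A(+)E$ clean. The forward direction's subtlety, in turn, is the clean reduction of a chain of principal ideals in $A(+)E$ to the vector-space chain $An\subseteq Am$, where the quasi second property becomes transparent via dimension count.
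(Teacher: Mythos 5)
Your proof is correct, but the converse takes a genuinely different route from the paper's. For the forward direction you verify the quasi second condition for $A(+)E$ by hand (nonunits are exactly the elements $(0,m)$, and $(0,m)A(+)E=0(+)Am$ is a line in the $A$-vector space $E$), whereas the paper simply notes that $A(+)E$ is local with maximal ideal $0(+)E$ of square zero and quotes Theorem~\ref{qsr}; these are equivalent in substance. The real divergence is in the converse: the paper first \emph{descends} the quasi second property from $A(+)E$ to $A$, using Lu's criterion that in a factorial module $aE\subseteq bE$ iff $b\mid a$ to identify the chain $0\neq aA\subseteq bA\neq A$ with the chain $(a,0)A(+)E\subseteq (b,0)A(+)E$, and only then applies Theorem~\ref{qsr} to the domain $A$. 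You instead apply Theorem~\ref{qsr} directly to $A(+)E$, rule out the product-of-two-fields alternative via the nilpotents in $0(+)E$ (or, when $E=0$, via $A$ being a domain), and read off that $A$ is local with $\mathfrak{m}^2=0$, hence a field. Both arguments lean on applying Theorem~\ref{qsr} to the non-domain $A(+)E$, which the paper itself also does, so you are on equal footing there. A noteworthy byproduct of your route: it never uses factoriality of $E$, only the standing hypothesis that $A$ is a domain, so it actually proves the converse under a weaker assumption than stated. This is consistent with the paper, whose example purporting to show the necessity of the factorial hypothesis takes $A=K[x,y]/(x,y)^2$, which is not a domain and thus falls outside the paper's own standing conventions.
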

\begin{proof}
Note that since $A$ is a field, then $A(+)E$ is a local ring with the maximal ideal $0(+)E$ by \cite[Theorem 3.2]{Anderson}. Therefore, $A(+)E$ is a quasi second ring as $(0(+)E)^2=0(+)0$ by Theorem \ref{qsr}. For the converse part, we first show that $A$ is a quasi second ring. Let $a,b\in A$ such that $0\neq aA\subseteq bA\neq A$. Since $A$ is a factorial module then $0\neq aE\subseteq bE\neq E$ if and only if $aA\subseteq bA$ by \cite[Theorem 2.1]{Lu}. Now, consider the following chain $0(+)0\neq (a,0)A(+)E=aA(+)aE\subseteq (b,0)A(+)E\neq A(+)E$. It follows that $aA=bA$, so $A$ is a quasi second ring. Moreover, since $A$ is a UFD then by Theorem \ref{qsr}, $(A,\mathfrak{m})$ is a local ring and $\mathfrak{m}^2=0$. Hence, $\mathfrak{m}=0$ which implies that $A$ is a field as $A$ is an integral domain. 
\end{proof}
The next example highlights the condition of factorial module in Theorem \ref{idealization}.
\begin{example}
Consider the $A=K[x,y]/(x,y)^2$-module $E=(x,y)/(x,y)^2$ where $K$ is a field. One can see that $E$ is not a factorial module and $A$ is a local ring with the unique maximal ideal $P=(x,y)/(x,y)^2$ with $P^2=0$ and $PE=0$. Note that $(P(+)E)^2=0(+)0$ which implies that $R(+)E$ is a quasi second ring by Theorem \ref{qsr}.
\end{example}
\begin{theorem}\label{T1-starcondition}
$qD_A(E)$ is a $T_1$-space if and only if $E$ is a quasi second $A$-module.
\end{theorem}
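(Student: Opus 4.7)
The plan is to reduce the $T_1$ property to a membership condition on the closures of singletons and then translate it directly into the definition of a quasi second module. The main tool will be the explicit description of $\overline{\{[a]\}}$ already established in Proposition \ref{Closure}.

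First I would recall that a topological space is $T_1$ if and only if every singleton is closed. Applying this criterion to $qD_A(E)$ and invoking the second part of Proposition \ref{Closure}, the space is $T_1$ precisely when
\[
\overline{\{[a]\}}=\{[b]\in\Ec(W(E)^{\#}):bE\subseteq aE\}=\{[a]\}
\]
for every $a\in W(E)^{\#}$. In plain algebraic language, this says: whenever $a,b\in W(E)^{\#}$ satisfy $bE\subseteq aE$, we must have $bE=aE$.

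Next I would identify $W(E)^{\#}$ with the set of $c\in A$ for which $0\neq cE\neq E$. With this identification, the condition extracted above reads: for all $a,b\in A$ with $0\neq bE\neq E$ and $0\neq aE\neq E$, $bE\subseteq aE$ implies $bE=aE$. The $(\Leftarrow)$ direction is then immediate, since any $a,b\in A$ with $0\neq bE\subseteq aE\neq E$ automatically land in $W(E)^{\#}$ (the chain $0\neq bE\subseteq aE\neq E$ forces $bE\neq E$ and $aE\neq 0$), so the quasi second hypothesis applied to this pair yields $bE=aE$, proving the singleton closure condition and hence $T_1$.

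For the $(\Rightarrow)$ direction I would start with arbitrary $a,b\in A$ satisfying $0\neq bE\subseteq aE\neq E$, verify that both $a$ and $b$ lie in $W(E)^{\#}$ exactly as in the previous paragraph, and then apply the singleton closure condition to conclude $bE=aE$, which is the definition of a quasi second module. I do not anticipate a hard step here: the whole argument is a translation exercise between the topological statement ``every $\overline{\{[a]\}}$ is a singleton'' and the algebraic statement in the definition of quasi second module, with the only subtle point being the careful bookkeeping that ensures the endpoints of the containment $bE\subseteq aE$ actually represent elements of $\Ec(W(E)^{\#})$.
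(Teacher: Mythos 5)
Your proposal is correct and follows essentially the same route as the paper: both reduce $T_1$ to the statement that every singleton is closed, invoke Proposition \ref{Closure} to identify $\overline{\{[a]\}}$ with $\{[b]:bE\subseteq aE\}$, and observe that the chain $0\neq bE\subseteq aE\neq E$ places both $a$ and $b$ in $W(E)^{\#}$, making the two conditions literal translations of one another. Your explicit bookkeeping of why the endpoints of the containment lie in $W(E)^{\#}$ is a point the paper leaves implicit, but the argument is the same.
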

\begin{proof}
$\Rightarrow:$ Suppose $qD_A(E)$ is a $T_1$-space and let $0\neq bE\subseteq aE\neq E$ for some $a,b\in A$. Note that $[b]\in \overline{\{[a]\}}=\{[a]\}$ by Proposition \ref{Closure}. This implies that $[b]=[a]$ and so $bE=aE$. Thus, $E$ is a quasi second $A$-module.

$\Leftarrow:$ Let $[b]\in \overline{\{[a]\}}$ for some $a,b\in W(E)^\#$. Then, $0\neq bE\subseteq aE\neq E$ which implies $bE=aE$ and so $[b]=[a]$ by the assumption. Hence, $\overline{\{[a]\}}=\{[a]\}$ i.e. $qD_A(E)$ is a $T_1$-space.
\end{proof}

\begin{theorem}\label{||=2}
Let $E$ be a finitely generated comultiplication module such that $\ann(E)$ is a semiprime ideal of $A$. Then, $qD_A(E)$ is a $T_1$-space if and only if $qD_A(E)$ is empty or discrete space with $|\Ec(W(E)^\#)|=2$.
\end{theorem}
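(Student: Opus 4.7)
The approach is to combine Theorem \ref{T1-starcondition}, which translates the $T_1$ property of $qD_A(E)$ into $E$ being quasi second, with Theorem \ref{SimpleorWeakIdempotent}, whose hypotheses match exactly those imposed here and which supplies a structural dichotomy for quasi second modules. In the forward direction I would begin by observing that if $qD_A(E)$ is $T_1$ then $E$ is quasi second, and then invoke Theorem \ref{SimpleorWeakIdempotent} to reduce to one of its two listed cases.

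In Case (i), when $E$ is simple, every $a\in A$ satisfies $aE=0$ or $aE=E$, so $W(E)=\ann(E)$ and hence $W(E)^\#=\emptyset$, which makes $qD_A(E)$ the empty space. In Case (ii), where $E=eE\oplus(1-e)E$ with $eE$ and $(1-e)E$ minimal (hence nonzero, and therefore proper in $E$ because the other summand is also nonzero), I would first check that $e,1-e\in W(E)^\#$. Then for an arbitrary $a\in W(E)^\#$ I would decompose $aE=a(eE)\oplus a((1-e)E)$; each summand lies in a simple submodule and is therefore either $0$ or that full simple submodule, and since $0\neq aE\neq E$ exactly one of the two vanishes. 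This forces $aE\in\{eE,(1-e)E\}$, so $\Ec(W(E)^\#)=\{[e],[1-e]\}$ has exactly two elements. Moreover, because $eE\cap(1-e)E=0$ while both summands are nonzero, neither contains the other, so each is a maximal element of $\{bE:b\in W(E)^\#\}$; Theorem \ref{isolated} then yields that both singletons are open, so $qD_A(E)$ is discrete with $|\Ec(W(E)^\#)|=2$.

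The converse is topologically immediate: an empty topological space is vacuously $T_1$, and any discrete space is $T_1$. The only place requiring genuine work is the forward direction, and within it the main obstacle is Case (ii), where one must simultaneously establish that the direct-sum-of-simples computation pins $\Ec(W(E)^\#)$ down to a two-element set and verify that both $eE$ and $(1-e)E$ are maximal in $\{bE\}_{b\in W(E)^\#}$ so as to trigger Theorem \ref{isolated}. Once those are in hand, the dichotomy in the target statement is precisely the dichotomy provided by Theorem \ref{SimpleorWeakIdempotent}.
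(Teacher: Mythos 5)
Your proposal is correct and follows essentially the same route as the paper: reduce $T_1$ to quasi second via Theorem \ref{T1-starcondition}, apply the dichotomy of Theorem \ref{SimpleorWeakIdempotent}, and in the nonsimple case show $\Ec(W(E)^\#)=\{[e],[1-e]\}$ with the two classes incomparable, hence isolated. The only (harmless, in fact slightly cleaner) difference is that you rederive the fact that every $aE$ with $a\in W(E)^\#$ equals $eE$ or $(1-e)E$ directly from the decomposition into minimal summands, whereas the paper quotes this fact from the proof of Theorem \ref{SimpleorWeakIdempotent}.
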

\begin{proof}
In view of Theorem \ref{SimpleorWeakIdempotent}, we have the following two cases. If $E$ is a simple module, then $qD_A(E)$ is an empty space. For the other case, we have $xE=eE$ or $xE=(1-e)E$ for every $x\in W(E)^\#$ and for some weak idempotent $e\in A$ such that $\bar e\neq\bar0,\bar1$. It follows that $\Ec(W(E)^\#)=\{[e],[1-e]\}$ together with $U_e=\{[e]\}$ and $U_{1-e}=\{[1-e]\}$ since $(1-e)E\not\subseteq eE$ and $eE\not\subseteq (1-e)E$. Therefore, $qD_A(E)$ is a discrete space with $|\Ec(W(E)^\#)|=2$.
\end{proof}
The next proposition will be used in the following examples to produce examples of (in)finite $T_1$-spaces.
\begin{proposition}\label{ainW(M)iff}
Let $E$ be a finitely generated $A$-module. Then, $a\in W(E)^\#$ if and only if $a+\ann(E)$ is a nonzero nonunit element in $A/\ann(E)$.
\end{proposition}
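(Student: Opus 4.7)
The plan is to translate each of the two conditions defining $W(E)^{\#}$ into a condition on the coset $a+\ann(E)$, handling the ``nonzero'' part by a direct unwinding of definitions and the ``nonunit'' part by using Nakayama's lemma (which is where finite generation enters).

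For the forward direction, assume $a\in W(E)^{\#}$. First, $a\notin\ann(E)$ is exactly the statement $a+\ann(E)\neq 0$ in $A/\ann(E)$. To see that $a+\ann(E)$ is not a unit, I would argue by contradiction: if there exists $b\in A$ with $ab-1\in\ann(E)$, then $(ab-1)E=0$, so $abE=E$, and hence $E=abE\subseteq aE$, forcing $aE=E$ and contradicting $a\in W(E)$.

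For the converse, assume $a+\ann(E)$ is a nonzero nonunit of $A/\ann(E)$. The nonzero part again gives $a\notin\ann(E)$ directly. For the remaining requirement $aE\neq E$, I would argue by contradiction: suppose $aE=E$. Since $E$ is finitely generated, Nakayama's lemma applied to the ideal $(a)$ produces an element $r\in(a)$ with $(1-r)E=0$. Writing $r=ab$ for some $b\in A$, this says $1-ab\in\ann(E)$, i.e.\ $(a+\ann(E))(b+\ann(E))=1+\ann(E)$, contradicting the hypothesis that $a+\ann(E)$ is a nonunit.

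There is no real obstacle beyond recognizing the right formulation of Nakayama's lemma; the forward direction is essentially a definition chase, and the converse is a one-line application of Nakayama. The finite generation hypothesis on $E$ is used only in the converse direction, to invoke Nakayama, and without it the statement genuinely fails (e.g.\ for the $\mathbb{Z}$-module $\mathbb{Q}$, every nonzero integer acts surjectively even though its image in $\mathbb{Z}/\ann(\mathbb{Q})=\mathbb{Z}$ is a nonunit).
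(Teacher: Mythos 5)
Your proof is correct and follows essentially the same route as the paper's: the ``nonzero'' condition is a direct unwinding of $a\notin\ann(E)$, the forward ``nonunit'' part is the observation that a unit modulo $\ann(E)$ forces $aE=E$, and the converse uses Nakayama's lemma on the finitely generated module $E$ to turn $aE=E$ into $1-ab\in\ann(E)$. Your closing remark correctly identifies where finite generation is needed and why the statement fails without it.
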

\begin{proof}
Let $a\in W(E)^\#$ then $0\neq aE\neq E$. If $\bar a \in A/\ann(E)$ is a unit element, then $(\bar a)=A/\ann(E)\iff ((a)+\ann(E)) / \ann(E)= A/\ann(E)$. This implies that $aE=E$, which is a contradiction. Conversely, let $\bar a \in A/\ann(E)$ be a nonzero nonunit element then $aE\neq 0$. Now, suppose $aE=E$ then $1-ra \in \ann(E)$ if and only if $\bar a\in A/\ann(E)$ is a unit element by Nakayama's Lemma. Therefore, $a\in W(E)^\#$ as described in the proposition. 
\end{proof}
\begin{example}\label{infiniteT1space}
Consider the $A=\Z_2[\{x_i\}_{i\in I}]$-module $E=\Z_2[\{x_i\}_{i\in I} ]/(\{x_i\}_{i\in I})^2$ for any index set $I$. Note that $A$-module $E$ is a finitely generated, indeed a cyclic module. Moreover, one can explicitly compute that $\ann(E)=(x_i:i\in I)^2$ and $(A/\ann(E) )^\#=\left\{\sum_{i\in I} a_ix_i:a_i\in \Z_2\right\}\setminus \Z_2$ by Proposition \ref{ainW(M)iff} where $A/\ann(E)$ is a ring with $2^{|I|+1}$ elements if $I$ is a finite index set. Hence, there are infinitely many equivalence classes in $\Ec(W(E)^\#)$ that satisfy $aE=\{\bar 0,\bar a\}$ for all $a\in W(E)^\#$. By Proposition \ref{T1-starcondition}, $qD_A(E)$ is an infinite $T_1$, indeed an infinite discrete space.
\end{example}
\begin{example}
Consider the $A=\Z_2[x,y,z]$-module $E=\Z_2[x,y,z] /(x,y,z)^2$ as defined in Example \ref{infiniteT1space}. In a similar fashion, one can compute that $\ann(E)=(x,y,z)^2$ and $A/\ann(E)$ is a ring with $16$ elements for which $(A/\ann(E))^\#=\{\overline x,\overline y,\overline z,\overline{x+y},\overline{x+z},\overline{y+z},\overline{x+y+z}\}$ by Proposition \ref{ainW(M)iff}. Moreover, there are no pairs $a$ and $b$ in $ W(E)^\#$ satisfying $aE\subseteq bE$. Hence, $qD_A(E)$ is a $T_1$-space with $|\Ec(W(E)^\#|=7$ by Proposition \ref{T1-starcondition}.
\end{example}
The proofs of the following theorem are omitted, as they are straightforward.
\begin{theorem}\label{dsT1} 
Let $E$ be an $A$-module. Then the following statements hold:
\begin{enumerate}
\item[i)] $qD_A(E)$ is discrete space if and only if $aE$ is maximal element of $\{bE\}_{b\in W(E)^\#}$ for all $a\in W(E)^\#$.
\item[ii)] $E$ is a quasi second $A$-module if and only if $qD_A(E)$ is a discrete space.
\end{enumerate}
\end{theorem}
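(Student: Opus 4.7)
My plan is to prove both parts as direct consequences of earlier machinery, essentially unpacking the definition of "discrete" through Lemma \ref{smallest} and Theorem \ref{isolated}.

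For part (i), I would argue that $qD_A(E)$ is discrete precisely when each singleton $\{[a]\}$ is open. By Lemma \ref{smallest}, $U_a$ is the smallest open set containing $[a]$, so $\{[a]\}$ is open if and only if $U_a = \{[a]\}$. The equivalence with maximality of $aE$ in $\{bE\}_{b\in W(E)^\#}$ is then exactly the content of Theorem \ref{isolated} (the equivalence $(ii)\Leftrightarrow(iii)$ there). Thus (i) is just a repackaging of Theorem \ref{isolated} over all $a$ rather than a single $a$.

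For part (ii), the heart is to verify the equivalence between $E$ being quasi second and the maximality condition of part (i); then (ii) follows by combining this with (i). For the forward direction, suppose $E$ is quasi second and $a \in W(E)^\#$. If $aE \subseteq bE$ for some $b \in W(E)^\#$, then $0 \neq aE \subseteq bE \neq E$ (since $a \notin \ann(E)$ and $b \in W(E)$), and the quasi second property forces $aE = bE$, so $aE$ is maximal in $\{bE\}_{b\in W(E)^\#}$. For the reverse direction, assume every $aE$ with $a \in W(E)^\#$ is maximal in this family, and take $a, b \in A$ with $0 \neq bE \subseteq aE \neq E$. Then $b \notin \ann(E)$ and $b \in W(E)$ (since $bE \subseteq aE \neq E$), so $b \in W(E)^\#$; similarly $a \in W(E)^\#$. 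By maximality of $bE$ inside $\{cE\}_{c\in W(E)^\#}$, the inclusion $bE \subseteq aE$ with $a \in W(E)^\#$ forces $bE = aE$, so $E$ is quasi second.

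No step here should be a serious obstacle; all the conceptual work has been done in Lemma \ref{smallest}, Theorem \ref{isolated}, and the definition of quasi second module. If anything, the only point worth being careful about is verifying that both $a$ and $b$ lie in $W(E)^\#$ in the reverse direction of (ii), so that one is entitled to invoke the maximality hypothesis; this uses $bE \neq 0$ for $b \notin \ann(E)$ and $bE \subseteq aE \neq E$ for $b \in W(E)$. Note also that (ii) is consistent with Theorem \ref{T1-starcondition} together with the Alexandrov property (Corollary after Lemma \ref{smallest}): in an Alexandrov space $T_1$ and discreteness coincide, which explains why the quasi second hypothesis simultaneously characterizes both.
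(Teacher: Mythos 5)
Your proof is correct; the paper omits this proof as ``straightforward,'' and your argument supplies exactly the intended details: part (i) is the translation of discreteness into openness of singletons via Lemma \ref{smallest} and Theorem \ref{isolated}, and part (ii) follows by matching the quasi second condition with maximality in $\{bE\}_{b\in W(E)^\#}$. The one point that genuinely needs checking --- that $0\neq bE\subseteq aE\neq E$ forces both $a$ and $b$ to lie in $W(E)^\#$ so the maximality hypothesis applies --- is handled correctly in your reverse direction of (ii).
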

We now conclude this section by showing that $qD_A(E)$ is a discrete space if and only if $E$ is a quasi second $A$-module. That is, $qD_A(E)$ satisfies each separation axiom if and only if $E$ is a quasi second $A$-module. Now, we recall the needed definitions for the next theorem. A topological space $X$ is called metrizable if $X$ is induced by a metric. The following theorem is an immediate consequence of Theorem \ref{dsT1} and the fact that Alexandrov $T_1$-spaces are discrete.
\begin{theorem}
Let $E$ be an $A$-module. Then the following statements are equivalent:
\begin{enumerate}
\item[i)] $E$ is a quasi second $A$-module
\item[ii)] $aE$ is a maximal element of $\{bE\}_{b\in W(E)^\#}$ for all $a\in W(E)^\#$.
\item[iii)] $qD_A(E)$ is a discrete space.
\item[iv)] $qD_A(E)$ is metrizable.
\item[v)] $qD_A(E)$ is a $T_2$-space.
\item[vi)] $qD_A(E)$ is a $T_1$-space.
\end{enumerate}
\end{theorem}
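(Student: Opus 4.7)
The plan is to prove the six conditions equivalent by running the cycle
$(i) \Rightarrow (ii) \Rightarrow (iii) \Rightarrow (iv) \Rightarrow (v) \Rightarrow (vi) \Rightarrow (i)$,
using results already established in the paper together with a few standard facts from general topology. The equivalences $(i) \Leftrightarrow (ii) \Leftrightarrow (iii)$ are essentially the content of Theorem \ref{dsT1}: part (ii) of that theorem gives $(i) \Leftrightarrow (iii)$, and part (i) combined with Theorem \ref{isolated} (which identifies maximality of $aE$ with $U_a = \{[a]\}$ being open) furnishes $(ii) \Leftrightarrow (iii)$. So three of the arrows cost nothing beyond a citation.

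The middle stretch $(iii) \Rightarrow (iv) \Rightarrow (v) \Rightarrow (vi)$ is purely topological. A discrete topology is induced by the discrete metric $d(x,y) = 1$ for $x \neq y$ and $d(x,x) = 0$, so $(iii) \Rightarrow (iv)$; every metric space is Hausdorff, so $(iv) \Rightarrow (v)$; and $T_2 \Rightarrow T_1$ is immediate, so $(v) \Rightarrow (vi)$. None of these steps involve the module structure.

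The only step carrying real content is the closing implication $(vi) \Rightarrow (i)$. I would present this in two equivalent ways so the proof is self-contained. The first way is to cite Theorem \ref{T1-starcondition} directly, since that theorem already provides the equivalence $qD_A(E)$ is $T_1$ if and only if $E$ is a quasi second $A$-module. The second way, which is the route indicated by the comment preceding the statement that \emph{Alexandrov $T_1$-spaces are discrete}, closes the loop instead as $(vi) \Rightarrow (iii)$: by the Corollary following Lemma \ref{smallest}, $qD_A(E)$ is an Alexandrov space, so each $[a]$ admits the minimal open neighborhood $U_a$. If $qD_A(E)$ is also $T_1$ and $[b] \in U_a$ with $[b] \neq [a]$, then there would exist an open set $V$ with $[a] \in V$ and $[b] \notin V$; but $U_a \cap V$ would then be a strictly smaller open neighborhood of $[a]$, contradicting the minimality of $U_a$. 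Hence $U_a = \{[a]\}$ for every $[a]$, which is precisely (iii).

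There is no genuine obstacle to overcome: the theorem is a compact packaging of Theorem \ref{dsT1}, Theorem \ref{T1-starcondition}, and the Alexandrov property. The only authorial decision is which of the two routes to use for the closing arrow, and writing both takes only a couple of lines.
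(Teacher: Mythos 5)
Your proposal is correct and follows essentially the same route as the paper, which states the theorem as an immediate consequence of Theorem \ref{dsT1} together with the fact that Alexandrov $T_1$-spaces are discrete; your cycle merely spells out the standard topological implications and the minimal-neighborhood argument that the paper leaves implicit. The alternative closing arrow via Theorem \ref{T1-starcondition} is also valid but redundant.
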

A topological space is called \textit{$T_3$-space} if there exist two disjoint open sets for each closed set $C$ and a point $x\in X-C$ \cite{Munkres}.
\begin{theorem}
Let $E$ be an $A$-module. Then, $qD_A(E)$ is a $T_3$-space if and only if $U_a$ is closed for all $a\in W(E)^\#$.
\end{theorem}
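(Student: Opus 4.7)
The plan is to lean on two structural facts already established: that $U_a$ is the smallest open neighborhood of $[a]$ (Lemma \ref{smallest}), and the explicit description $\overline{\{[c]\}}=\{[b]\in\Ec(W(E)^\#):bE\subseteq cE\}$ from Proposition \ref{Closure}. Together these will let me convert $T_3$-separation into a statement about clopenness of the basic sets $U_a$.

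For the direction $(\Leftarrow)$, I assume that each $U_a$ is closed and verify the $T_3$ axiom directly. Given a closed set $C\subseteq qD_A(E)$ and a point $[c]\notin C$, the complement of $C$ is an open neighborhood of $[c]$, so by the minimality from Lemma \ref{smallest}, $U_c\subseteq\Ec(W(E)^\#)\setminus C$; equivalently, $U_c\cap C=\emptyset$. Because $U_c$ is closed by hypothesis, its complement $V_1:=\Ec(W(E)^\#)\setminus U_c$ is open, and together with $V_2:=U_c$ it produces disjoint open sets with $C\subseteq V_1$ and $[c]\in V_2$, yielding the $T_3$ axiom.

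For the direction $(\Rightarrow)$, I assume $qD_A(E)$ is $T_3$ and fix $a\in W(E)^\#$; I will show $\overline{U_a}=U_a$. Suppose for contradiction that there is $[c]\in\overline{U_a}$ with $aE\not\subseteq cE$. By Proposition \ref{Closure} this last inequality means $[a]\notin\overline{\{[c]\}}$, so applying $T_3$ to the closed set $\overline{\{[c]\}}$ and the external point $[a]$ yields disjoint open sets $V_1\supseteq\overline{\{[c]\}}$ and $V_2\ni[a]$. Lemma \ref{smallest} forces $U_a\subseteq V_2$, whence $V_1\cap U_a\subseteq V_1\cap V_2=\emptyset$. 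But $[c]\in\overline{\{[c]\}}\subseteq V_1$ is an open neighborhood of $[c]$, and $[c]\in\overline{U_a}$ means every such neighborhood must meet $U_a$---a contradiction.

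I do not anticipate any real obstacle: the argument is essentially a consequence of the Alexandrov structure of $qD_A(E)$, under which each point has a minimal neighborhood $U_a$ and the specialization order reduces $T_3$-separation to the question of whether these minimal neighborhoods are also closed. The only care required is to invoke Lemma \ref{smallest} at the right moment to shrink an arbitrary open neighborhood of $[a]$ down to $U_a$, and Proposition \ref{Closure} to translate the algebraic containment $aE\not\subseteq cE$ into the topological statement $[a]\notin\overline{\{[c]\}}$.
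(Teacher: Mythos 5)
Your proof is correct in both directions, and it differs from the paper's in the details of how the $T_3$ axiom is deployed. For $(\Rightarrow)$ the paper applies the axiom just once, to the closed set $\Ec(W(E)^\#)\setminus U_a$ and the point $[a]$: the resulting open set $V\supseteq \Ec(W(E)^\#)\setminus U_a$ must miss $U_a$ (again by Lemma \ref{smallest}), hence equals $\Ec(W(E)^\#)\setminus U_a$, which is therefore open. You instead argue by contradiction, separating $[a]$ from $\overline{\{[c]\}}$ for a hypothetical $[c]\in\overline{U_a}\setminus U_a$; this is valid but needs the extra input of Proposition \ref{Closure} to certify $[a]\notin\overline{\{[c]\}}$, whereas the paper's choice of closed set makes that step unnecessary. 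For $(\Leftarrow)$ the paper shows every open set is closed via Lemma \ref{unionofclosure} and leaves the passage from ``clopen basis'' to $T_3$ implicit; your version is more self-contained, since you only need the single set $U_c$ to be closed and you exhibit the separating pair $V_1=\Ec(W(E)^\#)\setminus U_c$, $V_2=U_c$ explicitly. Net effect: your backward direction is the more complete of the two, your forward direction the less economical, but both arguments rest on the same essential fact, the minimality of the neighborhoods $U_a$ from Lemma \ref{smallest}.
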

\begin{proof}
$\Rightarrow:$ Assume that $qD_A(E)$ is a $T_3$-space and choose $a\in W(E)^\#$. It follows that there exists an open set $V$ containing $X-U_a$ such that $V\cap U_a=\emptyset$. Moreover, note that $[b]\not\in U_a$ for all $[b]\in V$. Hence, $[b]\in X-U_a$ which implies that $V=X-U_a$, as desired.

$\Leftarrow:$ It sufficies to show that each open set is closed. Let $O$ be an open set then $O=\bigcup_{a\in W(E)^\#} U_a$ for some $a\in W(E)^\#$ by Proposition \ref{Prop1}. Then, $\overline{O}=\overline{\bigcup_{a\in W(E)^\#} U_a}=\bigcup_{a\in W(E)^\#} \overline{U_a}=\bigcup_{a\in W(E)^\#} U_a=O$ by Lemma \ref{unionofclosure}, as desired.
\end{proof}
Lastly, recall that a space $X$ is called a $T_5$-space if for any separated sets $A$ and $B$ there exist disjoint open sets containing them \cite{Munkres}. Moreover, an integral domain $A$ is called a \textit{valuation domain} if for any $a\in K$, either $a\in A$ or $a^{-1}\in A$ where $K$ is the quotient field of $A$. Equivalently, any ideal of $A$ is comparable with respect to inclusion, see \cite[Proposition 5.2]{Larsen}.
\begin{theorem} Let $E$ be an $A$-module. Then the following statements hold:
\begin{enumerate}
\item[i)] If $A$ is a valuation domain then $qD_A(E)$ is a $T_5$-space.
\item[ii)] If $E$ is a torsion free nonuniserial and not a second module over $A$, then $qD_A(E)$ is not a $T_5$-space.
\end{enumerate}
\end{theorem}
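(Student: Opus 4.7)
The plan is to handle (i) and (ii) independently, with each argument built on the Alexandrov structure of $qD_A(E)$: every class $[a]$ has the minimal open neighborhood $U_a$ by Lemma \ref{smallest}, and for any subset $P$ the set $V_P := \bigcup_{[p]\in P} U_p$ is the smallest open containing $P$. The point of (i) is that a valuation hypothesis on $A$ forces the basic opens to be linearly ordered, making $V_P$ and $V_Q$ automatically disjoint for separated $P, Q$; the point of (ii) is to exhibit an explicit pair of separated singletons whose minimal neighborhoods are forced to share a point, obstructing $T_5$.

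For (i), first observe that in a valuation domain, any two nonzero principal ideals are comparable, so for any $a,b \in W(E)^{\#}$ we get $aE \subseteq bE$ or $bE \subseteq aE$, and thus $U_b \subseteq U_a$ or $U_a \subseteq U_b$ by Proposition \ref{Prop1}(ii). Now let $P, Q \subseteq qD_A(E)$ be separated, i.e.\ $\overline{P} \cap Q = P \cap \overline{Q} = \emptyset$. For each $[p] \in P$, since $[p] \notin \overline{Q}$ and $U_p$ is the minimum open containing $[p]$, Proposition \ref{Closure} gives $U_p \cap Q = \emptyset$; setting $V_P := \bigcup_{[p]\in P} U_p$ and defining $V_Q$ symmetrically we obtain opens with $P \subseteq V_P$, $Q \subseteq V_Q$, $V_P \cap Q = \emptyset$, and $V_Q \cap P = \emptyset$. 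The key disjointness $V_P \cap V_Q = \emptyset$ then follows from the linear ordering of basic opens: if some $[z]$ lay in both, pick $[p]\in P$ and $[q]\in Q$ with $[z]\in U_p \cap U_q$; by linear ordering, either $U_p \subseteq U_q$, forcing $[p]\in U_q \subseteq V_Q$ against $V_Q \cap P = \emptyset$, or $U_q \subseteq U_p$, giving the symmetric contradiction.

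For (ii), the target is to produce elements $a, b, c \in W(E)^{\#}$ with $aE$ and $bE$ incomparable and yet $aE \subseteq cE$ and $bE \subseteq cE$; once these are in hand, the singletons $\{[a]\}$ and $\{[b]\}$ are separated (by Proposition \ref{Closure}, $[a]\in \overline{\{[b]\}}$ iff $aE \subseteq bE$), but every pair of opens containing them must respectively contain $U_a$ and $U_b$ and hence both meet $[c]$, which rules out $T_5$. The element $c$ is delivered by the hypothesis that $E$ is not a second module, which guarantees $W(E)^{\#} \neq \emptyset$, while torsion-freeness kills $\ann(E)$ and lets us identify $W(E)^{\#}$ with $\{a \neq 0 : aE \neq E\}$. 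The main obstacle is converting the incomparability of two submodules of $E$ (guaranteed by nonuniseriality) into the incomparability of two submodules of the specific form $aE$ and $bE$, both lying inside $cE$. The plan is to start from incomparable submodules $N, K$ and pick witnesses $x \in N \setminus K$, $y \in K \setminus N$; after replacing $x, y$ by scalar multiples by suitable elements of a nontrivial annihilator-free neighborhood around $c$, one aims to realize $Ax = aE'$, $Ay = bE'$ inside a common $cE$ and then transfer incomparability back to $aE, bE$ using that $E$ has no torsion. This last transfer step is where I expect the real work to lie, and it is the place where all three hypotheses (torsion-free, nonuniserial, not-second) must be used in concert.
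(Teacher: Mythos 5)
Your part (i) is correct and takes a genuinely different route from the paper's. The paper notes that in a valuation domain all the submodules $aE$ with $a\in W(E)^\#$ are pairwise comparable, hence $[a]\in\overline{\{[b]\}}$ or $[b]\in\overline{\{[a]\}}$ for every pair of points, so no nonempty separated pair exists and $T_5$ holds vacuously. You instead construct the separation explicitly: for separated $P,Q$ the smallest open set $V_P=\bigcup_{[p]\in P}U_p$ misses $Q$ (by the description of closures, $[p]\notin\overline{Q}$ is exactly $U_p\cap Q=\emptyset$), and nestedness of the basic opens forces $V_P\cap V_Q=\emptyset$. Your argument is complete as written and in fact proves the more general statement that any nested Alexandrov space with this minimal-neighborhood structure is $T_5$; the paper's argument is shorter but only yields the vacuous case.

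Part (ii) has a genuine gap. Your reduction is the right one and matches the paper's: it suffices to find $a,b,c\in W(E)^\#$ with $aE$, $bE$ incomparable and $aE,bE\subseteq cE$, for then $\{[a]\}$ and $\{[b]\}$ are separated while $[c]\in U_a\cap U_b$ lies in every pair of open sets around them. But you never produce such elements: the sketch with witnesses $x\in N\setminus K$, $y\in K\setminus N$ and ``realizing $Ax=aE'$'' does not yield submodules of the required form $aE$, and you yourself flag the transfer step as unresolved. The missing idea is a one-line trick. Starting from $a,b\in W(E)^\#$ with $aE\not\subseteq bE$ and $bE\not\subseteq aE$, and any $c\in W(E)^\#$ (which exists because $E$ is not second), replace $a,b$ by $ac,bc$. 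Then $acE,bcE\subseteq cE$ automatically; $ac,bc\in W(E)^\#$ because $E$ is torsion free and $c\neq 0$; and $acE\subseteq bcE$ would give, for each $m\in E$, an $m'$ with $c(am-bm')=0$, hence $am=bm'$ by torsion-freeness, i.e.\ $aE\subseteq bE$ --- so incomparability is preserved. I add that the other difficulty you correctly isolate, namely passing from ``nonuniserial'' (some incomparable submodules $N,K$) to incomparable submodules of the special form $aE,bE$, is not actually resolved by the paper either: its proof simply posits such $a,b$. So your diagnosis of where the work lies is partly sound, but the step the paper does supply --- multiplication by $c$ --- is absent from your proposal, and without it the proof of (ii) is incomplete.
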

\begin{proof}
\begin{enumerate}
\item[i)] One may have $bE\subseteq aE$ or $aE\subseteq bE$ for given $a,b\in W(E)^\#$ which implies that $[b]\in \overline{\{[a]\}}$ or $[a]\in \overline{\{[b]\}}$. Hence there are no separated sets in $qD_A(E)$ if $A$ is a valuation domain. 

\item[ii)] Let $a,b\in W(E)^\#$ then $aE\not\subseteq bE$ and $bE\not\subseteq aE$. Since $E$ is not a second module then there exists $c\in A$ such that $0\neq cE\neq E$, that is, $c\in W(E)^\#$. It follows that $\{[ac]\}$ and $\{[bc]\}$ are separated sets. Hence $qD_A(E)$ is not a $T_5$-space by observing that $[c]\in U_c\subseteq U_{ac}\cap U_{bc}$.
\end{enumerate}
\end{proof}
\section{Further Topological Properties of $qD_A(E)$}
The purpose of this section is to investigate several properties of $qD_A(E)$; namely (hyper,ultra)connectedness, lindelöf and compactness  together with countability axioms. We now briefly remind the definitions related to subsequent discussion.

A topological space $X$ is called \textit{ultraconnected} if there are no disjoint nonempty closed sets in $X$. Moreover, a space $X$ is called \textit{hyperconnected} if there are no disjoint nonempty open sets in $X$; equivalently, each open set in $X$ is dense, see \cite{Steen}.
\begin{proposition}\label{ultraconnected}
Let $E$ be an $A$-module and $\ann(E)$ is a prime ideal of $A$. Then $qD_A(E)$ is an ultraconnected space. In particular, $qD_A(E)$ is a $T_4$-space.
\end{proposition}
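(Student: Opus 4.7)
The plan is to exploit the hypothesis that $\ann(E)$ is prime to show that $W(E)^\#$ is closed under multiplication, and then use the description of singleton closures from Proposition \ref{Closure} to exhibit a common point in any pair of nonempty closed sets.

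First, I would verify the key closure property: if $a,b \in W(E)^\#$, then $ab \in W(E)^\#$. For the $\ann$ side, since $a,b \notin \ann(E)$ and $\ann(E)$ is prime by hypothesis, we have $ab \notin \ann(E)$, so $abE \neq 0$. For the $W$ side, since $aE \neq E$, the containment $abE \subseteq aE$ forces $abE \neq E$. Hence $ab \in W(E)^\#$ and $[ab] \in \Ec(W(E)^\#)$.

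Next, by Proposition \ref{Closure}, $\overline{\{[a]\}} = \{[c] : cE \subseteq aE\}$. Since $abE \subseteq aE$ and $abE \subseteq bE$, we get
\[
[ab] \in \overline{\{[a]\}} \cap \overline{\{[b]\}}.
\]
Now let $C_1, C_2$ be two nonempty closed subsets of $qD_A(E)$, and pick $[a] \in C_1$, $[b] \in C_2$. Because $C_1$ is closed and contains $[a]$, it contains $\overline{\{[a]\}}$; similarly $\overline{\{[b]\}} \subseteq C_2$. Therefore $[ab] \in C_1 \cap C_2$, proving $qD_A(E)$ is ultraconnected.

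For the ``in particular'' clause, I would observe that $T_4$ asks for disjoint open neighborhoods of any two disjoint closed sets; in an ultraconnected space every pair of nonempty closed sets meets, so the only disjoint pairs involve $\emptyset$, which is trivially separated from any closed set by taking $\emptyset$ and the whole space. Hence $qD_A(E)$ is $T_4$. The only substantive step is the multiplicative-closure observation that depends on primality of $\ann(E)$; the rest is a straightforward application of Proposition \ref{Closure}, and I do not anticipate any real obstacle.
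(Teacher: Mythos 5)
Your proof is correct and is exactly the argument the paper has in mind: the paper's proof simply defers to the analogous result for the divisor topology in \cite{YiKo}, where the key point is likewise that primality of the relevant annihilator makes $ab$ a legitimate point lying in $\overline{\{[a]\}}\cap\overline{\{[b]\}}$, so no two nonempty closed sets are disjoint. Your write-up just makes that cited argument explicit (including the routine empty-space/normality remarks), so there is nothing to add.
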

\begin{proof}
The proof is analogous to \cite[Proposition 8]{YiKo}.
\end{proof}
\begin{theorem} \label{hyperconnected}
$qD_A(E)$ is a hyperconnected space if and only if for any $a,b\in W(E)^\#$, there is $x\in W(E)^\#$ such that $aE+bE\subseteq xE\neq E$.
\end{theorem}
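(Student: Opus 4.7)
The plan is to translate hyperconnectedness into a statement about the basic open sets $\{U_a\}_{a\in W(E)^\#}$, and then to rephrase that in terms of the submodules $aE$ and $bE$. Recall hyperconnectedness means any two nonempty open sets have nonempty intersection, and equivalently every nonempty open set is dense.

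The first observation I would record is the identity
\[
U_a\cap U_b=\{[y]\in \Ec(W(E)^\#):aE+bE\subseteq yE\},
\]
which is immediate from the definition, since $[y]\in U_a\cap U_b$ iff both $aE\subseteq yE$ and $bE\subseteq yE$. The key point is that the condition $[y]\in \Ec(W(E)^\#)$ encodes $yE\neq E$, so nonemptiness of $U_a\cap U_b$ is precisely the statement that there exists $x\in W(E)^\#$ with $aE+bE\subseteq xE\neq E$.

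For the forward direction, assume $qD_A(E)$ is hyperconnected and fix arbitrary $a,b\in W(E)^\#$. By Proposition \ref{Prop1}(i) both $U_a$ and $U_b$ are nonempty (they contain $[a]$ and $[b]$ respectively), hence by hyperconnectedness $U_a\cap U_b\neq\emptyset$. Pick $[x]\in U_a\cap U_b$; by the identity above, $x\in W(E)^\#$ satisfies $aE+bE\subseteq xE\neq E$, as required.

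For the converse, suppose the submodule condition holds and let $O_1,O_2$ be any two nonempty open sets. Since $\{U_a\}_{a\in W(E)^\#}$ is a basis (or directly by Lemma \ref{smallest}, applied to any point of $O_i$), there exist $a,b\in W(E)^\#$ with $U_a\subseteq O_1$ and $U_b\subseteq O_2$. The hypothesis yields $x\in W(E)^\#$ with $aE+bE\subseteq xE\neq E$, and then $[x]\in U_a\cap U_b\subseteq O_1\cap O_2$, proving $qD_A(E)$ is hyperconnected. There is no real obstacle here; the only subtle step is ensuring that the representative $x$ lies in $W(E)^\#$ (not merely in $A$), which is exactly what the restriction $xE\neq E$ in the statement guarantees.
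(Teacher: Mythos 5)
Your proof is correct and follows essentially the same route as the paper: both directions reduce hyperconnectedness to the nonemptiness of $U_a\cap U_b$ via the basis and Lemma \ref{smallest}, with your converse phrased directly rather than by contradiction. No gaps.
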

\begin{proof}
Suppose $qD_A(E)$ is hyperconnected then $U_a\cap U_b\neq\emptyset$ for given $a,b\in W(E)^\#$. Thus, one may choose $[x]\in U_a\cap U_b$  which satisfies that $aE,bE\subseteq xE$. Hence, $aE+bE\subseteq xE$. Conversely, let $O_1$ and $O_2$ be two nonempty disjoint open sets with $[a]\in O_1$ and $[b]\in O_2$. By assumption, there exists $x\in W(E)^\#$ so that $aE,bE\subseteq xE$. Therefore, $[x]\in O_1\cap O_2$ by Lemma \ref{smallest} which leads to a contradiction.
\end{proof}
\begin{corollary}
Let $E$ be an uniserial module over $A$. Then, $qD_A(E)$ is a hyperconnected space.
\end{corollary}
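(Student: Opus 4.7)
The plan is to reduce the statement directly to the criterion for hyperconnectedness given in Theorem \ref{hyperconnected}. That theorem says $qD_A(E)$ is hyperconnected exactly when, for every $a,b \in W(E)^\#$, we can find $x \in W(E)^\#$ with $aE + bE \subseteq xE \neq E$. So the whole task is to produce such an $x$ using the uniseriality hypothesis.

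First, I would fix arbitrary $a,b \in W(E)^\#$ and note that $aE$ and $bE$ are submodules of $E$. By the definition of uniserial module, any two submodules are comparable; hence either $aE \subseteq bE$ or $bE \subseteq aE$. Without loss of generality, assume $aE \subseteq bE$. Then $aE + bE = bE$. I would then choose $x := b$. Since $b \in W(E)^\#$ by assumption, we have $bE \neq E$, so $aE + bE = bE = xE \neq E$, and trivially $aE + bE \subseteq xE$. This is precisely the condition required in Theorem \ref{hyperconnected}, and the conclusion follows.

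There is no real obstacle here; the corollary is essentially a one-line consequence, and the only point requiring care is to verify that the chosen $x$ lives in $W(E)^\#$ (not merely in $A$), which is immediate because we are taking $x=b$ itself. The structural content lies entirely in Theorem \ref{hyperconnected}, and the uniserial hypothesis is used only to bypass having to construct an external bound on $aE + bE$: comparability forces the sum to coincide with the larger of the two, which is automatically of the right form.
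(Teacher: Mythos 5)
Your proof is correct and follows exactly the route the paper intends: the paper's own proof simply cites Theorem \ref{hyperconnected}, and your argument is the natural fleshing-out of that citation, using comparability of $aE$ and $bE$ to take $x$ to be whichever of $a,b$ gives the larger submodule. Nothing further is needed.
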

\begin{proof}
The proof is a direct consequence of Theorem \ref{hyperconnected}.
\end{proof}
The following example indicates that $E$ is not necessarily an uniserial module if $qD_A(E)$ is a hyperconnected space.
\begin{example}
Let $A$ be a local ring with a unique principal maximal ideal $P=(p)$. Consider the $A$-module $E=A\times A$. One can see that $E$ is not an uniserial module and for any pair $a,b\in W(E)^\#$, there exists $p\in A$ such that $aE+bE\subseteq pE\neq E$. Hence, $qD_A(A\times A)$ is a hyperconnected space by Theorem \ref{hyperconnected}.
\end{example}
A space $X$ is called \textit{first countable} or satisfies \textit{the first countability axiom} if each point of $X$ has a countable basis. Also, $X$ is called a \textit{second countable} space if $X$ has a countable basis, see \cite{Munkres}. Additionally, recall from \cite{Arenas} fact that Alexandrov $T_0$-space $X$ is second countable if and only if $X$ is countable. Moreover, $X$ is called a \textit{connected space} if there are no disjoint open sets that cover $X$. Also, for given $a,b\in X$, a continuous map $f:[c,d]\to X$ satisfying $f(c)=a$ and $f(d)=b$ is called a \textit{path} in $X$ from $a$ and $b$. A space $X$ is called \textit{path connected} if there is a path between for each pair of $X$, see \cite{Munkres}. A topological space is called \textit{locally (path)connected} if there is a basis of $X$ containing (path)connected open sets, \cite{Steen}.
\begin{corollary} Let $E$ be an $A$-module. Then, 
\begin{enumerate}
\item[i)] $qD_A(E)$ satisfies first countability axiom.
\item[ii)] $qD_A(E)$ is (path)connected space if $\ann(E)$ is a prime ideal.
\item[iii)] $qD_A(E)$ is a second countable space if and only if $\Ec(W(E^\#))$ has countably many elements or $A$ is a countable integral domain.
\item[iv)] $qD_A(E)$ is a locally (path)connected space.
\end{enumerate}
\end{corollary}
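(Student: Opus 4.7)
\textbf{Part (i).} Lemma~\ref{smallest} shows that every point $[a]\in qD_A(E)$ possesses $U_a$ as its smallest open neighbourhood. Hence the one-element family $\{U_a\}$ already serves as a (countable) local basis at $[a]$, so first countability is immediate.

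\textbf{Part (ii).} Connectedness follows from ultraconnectedness via Proposition~\ref{ultraconnected}, since any ultraconnected space is trivially connected. For path connectedness, given $[a],[b]\in\Ec(W(E)^{\#})$, the hypothesis that $\ann(E)$ is prime, together with $a,b\notin\ann(E)$, forces $ab\notin\ann(E)$; moreover $abE\subseteq aE\neq E$ yields $ab\in W(E)^{\#}$. Since $abE\subseteq aE$ and $abE\subseteq bE$, Proposition~\ref{Closure} gives $[ab]\in\overline{\{[a]\}}\cap\overline{\{[b]\}}$. I would then define $f\colon[0,1]\to qD_A(E)$ by $f(t)=[a]$ for $t<\tfrac{1}{2}$, $f(\tfrac{1}{2})=[ab]$, $f(t)=[b]$ for $t>\tfrac{1}{2}$. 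Continuity is verified case by case on a basic open set $U_c$: if $[ab]\in U_c$ then every open set containing $[ab]$ also contains $[a]$ and $[b]$ (by the specialization relation just noted), so $f^{-1}(U_c)=[0,1]$; otherwise $f^{-1}(U_c)$ is one of $\emptyset,[0,\tfrac{1}{2}),(\tfrac{1}{2},1],[0,\tfrac{1}{2})\cup(\tfrac{1}{2},1]$, all open in $[0,1]$.

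\textbf{Part (iii).} Since $qD_A(E)$ is an Alexandrov $T_0$-space by Corollary~\ref{smallest} and Proposition~\ref{T0}, the cited result from \cite{Arenas} gives: $qD_A(E)$ is second countable if and only if $\Ec(W(E)^{\#})$ is countable. The alternative condition that $A$ is a countable integral domain is sufficient because $|\Ec(W(E)^{\#})|\leq|W(E)^{\#}|\leq|A|$. Thus both conditions in the statement yield (and in fact one of them is forced by) second countability.

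\textbf{Part (iv).} I claim the basic open set $U_a$ is itself path connected. Indeed, for any $[b],[c]\in U_a$ we have $aE\subseteq bE$ and $aE\subseteq cE$, so $[a]\in\overline{\{[b]\}}\cap\overline{\{[c]\}}$ by Proposition~\ref{Closure}. The path $g\colon[0,1]\to U_a$ given by $g(t)=[b]$ for $t<\tfrac{1}{2}$, $g(\tfrac{1}{2})=[a]$, $g(t)=[c]$ for $t>\tfrac{1}{2}$ has image in $U_a$, and by the same case analysis as in (ii) (using that any open set containing $[a]$ must contain both $[b]$ and $[c]$) it is continuous. Hence $\{U_a\}_{a\in W(E)^{\#}}$ is a basis of (path) connected open sets, proving local (path) connectedness. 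The main subtlety throughout is the reliance on the specialization order encoded by $\overline{\{[\cdot]\}}$: once it is observed that $[a]\in\overline{\{[b]\}}$ forces $[b]$ into every open neighbourhood of $[a]$, the path-continuity verifications in (ii) and (iv) reduce to bookkeeping.
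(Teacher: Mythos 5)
Your proof is correct and follows essentially the same route as the paper, which simply cites Lemma~\ref{smallest}, Proposition~\ref{ultraconnected}, and the Alexandrov-space result of Arenas; you have merely unpacked those citations, in particular writing out the explicit three-point (Sierpi\'{n}ski-type) paths that underlie the Arenas theorem on path connectedness of Alexandrov spaces. The details check out: primeness of $\ann(E)$ does give $ab\in W(E)^{\#}$, and the specialization argument makes each path continuous.
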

\begin{proof}
The proofs follow directly from Lemma \ref{smallest}, Proposition \ref{ultraconnected} and \cite[Theorem 2.8]{Arenas}.
\end{proof}
\begin{example}
We remark that if $A$ is a countable integral domain then $\Ec(W(E)^\#)$ is countable but the converse is not generally true. For example, $\Ec(W(K)^\#)=\emptyset$
for $K$-module $K$ where $K$ is an uncountable field.
\end{example}
We say that a topological space $X$ is \textit{Lindelöf} (resp. \textit{compact}) if every open cover has a countable (finite) subcover.
\begin{theorem}\label{Lindelöf}
$qD_A(E)$ is Lindelöf if and only if the family $\{aE\}_{a\in W(E)^\#}$ has countably many minimal elements with respect to inclusion.
\end{theorem}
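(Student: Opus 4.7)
The plan is to leverage Lemma \ref{smallest}, which says $U_a$ is the smallest open set containing $[a]$. Its immediate consequence is that any open cover $\{O_\lambda\}$ of $qD_A(E)$ can be refined point-by-point: picking $\lambda(a)$ with $[a]\in O_{\lambda(a)}$ gives $U_a\subseteq O_{\lambda(a)}$, so a countable subfamily $\{a_i\}_i$ with $\bigcup_i U_{a_i}=\Ec(W(E)^\#)$ yields the countable subcover $\{O_{\lambda(a_i)}\}_i$ of the original cover. Hence $qD_A(E)$ is Lindelöf if and only if the canonical basic cover $\{U_a\}_{a\in W(E)^\#}$ itself admits a countable subcover, which unfolds to: there is a countable $\{a_i\}_i\subseteq W(E)^\#$ such that for every $b\in W(E)^\#$ one has $a_iE\subseteq bE$ for some $i$.

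For the forward direction, suppose such a countable subcover $\{U_{a_i}\}_{i\in \mathbb{N}}$ exists, and let $bE$ be a minimal element of $\{aE\}_{a\in W(E)^\#}$. The covering property gives an index $i$ with $a_iE\subseteq bE$, and minimality of $bE$ collapses this to $a_iE=bE$. Consequently each class $[b]$ with $bE$ minimal belongs to the countable list $\{[a_i]\}_i$, so $\{aE\}$ has at most countably many minimal elements.

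For the backward direction, enumerate the minimal elements as $\{m_jE\}_{j\in J}$ with $|J|\le\aleph_0$ and consider an arbitrary basic cover $\{U_{a_\lambda}\}$. For each $j$ select $\lambda_j$ with $[m_j]\in U_{a_{\lambda_j}}$; by minimality of $m_jE$ this forces $a_{\lambda_j}E=m_jE$, so $U_{a_{\lambda_j}}=U_{m_j}$. The natural candidate for a countable subcover is then $\{U_{a_{\lambda_j}}\}_{j\in J}$. The main obstacle is to verify that this family already covers $\Ec(W(E)^\#)$, which amounts to showing that every $b\in W(E)^\#$ admits a minimal $m_jE$ with $m_jE\subseteq bE$. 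I would establish this by applying Zorn's lemma to the poset $\{cE : c\in W(E)^\#,\,cE\subseteq bE\}$ ordered by reverse inclusion, so that maximal elements in the reversed order become $\subseteq$-minimal elements below $bE$; the delicate point is checking that every descending chain $c_1E\supseteq c_2E\supseteq\cdots$ of such principal submodules admits a lower bound still of the form $cE$ with $c\in W(E)^\#$, so that Zorn applies. This chain-condition step is the crux of the argument, and I would handle it by exploiting stability of descending chains of the form $\{cE\}_{c\in W(E)^\#}$ — essentially the algebraic counterpart of Lindelöfness that the theorem is extracting.
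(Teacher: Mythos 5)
Your reduction via Lemma \ref{smallest} (Lindel\"ofness is equivalent to the canonical basic cover $\{U_a\}_{a\in W(E)^\#}$ admitting a countable subcover) and your forward direction are correct and coincide with the paper's argument: any minimal $bE$ must satisfy $a_iE\subseteq bE$, hence $a_iE=bE$, for some member of the countable subcover. The genuine gap is in the backward direction, and you have located it precisely: everything hinges on the claim that every $bE$ with $b\in W(E)^\#$ lies above a \emph{minimal} element of the family $\{aE\}_{a\in W(E)^\#}$. Your proposed Zorn's lemma argument cannot close this gap, because the required chain condition fails: a descending chain $c_1E\supseteq c_2E\supseteq\cdots$ of nonzero submodules of this form need not admit a nonzero lower bound of the form $cE$. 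Already for the $\Z$-module $\Z$ one has $2\Z\supsetneq 4\Z\supsetneq 8\Z\supsetneq\cdots$ with intersection $0$, so no $b\Z$ sits above a minimal element and Zorn does not apply; there is no ``stability of descending chains'' to exploit in general.

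For what it is worth, the paper's own proof of the backward implication makes exactly the same unjustified assertion at exactly this point (``since $U_a\subseteq U_{a_i}$ for any $a\in W(E)^\#$'', which is precisely the claim that some minimal $a_iE$ lies below $aE$), so you have isolated the step the paper glosses over rather than missed an idea the paper supplies. The claim is in fact false in general: the paper itself exhibits the $\Z$-module $\Z$ as a counterexample to the analogous compactness statement (no minimal elements, hence finitely many, yet not compact), and the same phenomenon defeats the Lindel\"of statement once the space is uncountable --- e.g.\ $E=A=K[x]$ over an uncountable field $K$ has no minimal elements in $\{aE\}_{a\in W(E)^\#}$, while every $U_a$ is finite and $\Ec(W(E)^\#)$ is uncountable, so $qD_A(E)$ is not Lindel\"of. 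The backward direction therefore needs the extra hypothesis that every $aE$ contains a minimal member of the family; under that hypothesis your argument (and the paper's) goes through verbatim.
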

\begin{proof}
$\Rightarrow:$ Suppose $qD_A(E)$ is a Lindelöf space then there is a countable index set $\triangle$ such that $\Ec(W(E)^\#)=\displaystyle\bigcup_{i\in \triangle} U_{a_i}$ by Proposition \ref{Prop1}. Now, choose $bE\in \{aE\}_{a\in W(E)^\#}$ then $[b]\in \Ec(W(E)^\#)=\displaystyle\bigcup_{i\in \triangle} U_{a_i}$. It follows that there must be an index $i$ so that $[b]\in U_{a_i}$ which implies $a_iE\subseteq bE$. Hence, the minimal elements of $\{aE\}_{a\in W(E)^\#}$ must be one of the elements in $\{a_iE\}_{i\in \triangle}$.

$\Leftarrow:$ Let $\{a_iE\}_{i\in \triangle}$ be the set of minimal elements of $\{aE\}_{a\in W(E)^\#}$ for some countable index set $\triangle$. Let $\Ec(W(E)^\#)=\displaystyle\bigcup_{j\in \Gamma} O_j$ for some open sets $O_j$ in $qD_A(E)$. Moreover, one can express $\Ec(W(E)^\#)$ as $\Ec(W(E)^\#)=\displaystyle\bigcup_{j\in \Gamma} O_j=\bigcup_{j\in \Gamma} U_{x_j}$ for some $x_j\in W(E)^\#$. Note that $[a_i]\in \Ec(W(E)^\#)=\bigcup_{j\in \Gamma} U_{x_j}$ for every $i\in \triangle$. Hence, there must be an index $x_{j_i}$ such that $[a_i]\in U_{x_{j_i}}$ which implies that $x_{j_i}E\subseteq a_iE$. Thus, $U_{x_{j_i}}=U_{a_i}$ by the minimality of $a_iE$. Also, since $U_a\subseteq U_{a_i}$ for any $a\in W(E)^\#$ one can write the following expression $\displaystyle \Ec(W(E)^\#)=\bigcup_{a\in W(E)^\#} U_a=\bigcup_{i\in \triangle} U_{a_i}=\bigcup_{i\in \triangle} U_{x_{j_i}}$ for countable index set $\triangle$. Therefore, $qD_A(E)$ is a Lindelöf space.
\end{proof}
In a similar fashion, the previous theorem can be easily modified for compact spaces.
\begin{theorem}
$qD_A(E)$ is compact if and only if the family $\{aE\}_{a\in W(E)^\#}$ has finitely many minimal elements with respect to inclusion.
\end{theorem}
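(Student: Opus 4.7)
The plan is to run the argument of Theorem \ref{Lindelöf} verbatim, merely replacing \emph{countable} by \emph{finite} in both directions, so essentially both implications follow by a direct analogue. I would first record the standing observation, used throughout, that by Proposition \ref{Prop1}(iii) the collection $\{U_a\}_{a\in W(E)^\#}$ is itself an open cover of $\Ec(W(E)^\#)$, and that by Lemma \ref{smallest} every open set containing $[a]$ must contain $U_a$.

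For the forward direction, assume $qD_A(E)$ is compact. Applying compactness to the cover $\Ec(W(E)^\#)=\bigcup_{a\in W(E)^\#} U_a$ produces a finite subcover $\Ec(W(E)^\#)=\bigcup_{i=1}^n U_{a_i}$. Now given any $b\in W(E)^\#$, the point $[b]$ lies in some $U_{a_i}$, which by definition means $a_iE\subseteq bE$. Hence every member of $\{aE\}_{a\in W(E)^\#}$ lies above at least one of the $n$ submodules $a_1E,\ldots,a_nE$, so the minimal elements of $\{aE\}_{a\in W(E)^\#}$ must coincide with a subcollection of $\{a_1E,\ldots,a_nE\}$ and are therefore finite in number.

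For the converse, suppose $a_1E,\ldots,a_nE$ is the complete (finite) list of minimal elements of $\{aE\}_{a\in W(E)^\#}$. Let $\{O_j\}_{j\in\Gamma}$ be an arbitrary open cover of $\Ec(W(E)^\#)$; refining through the basis gives $\Ec(W(E)^\#)=\bigcup_{j\in\Gamma} U_{x_j}$ for suitable $x_j\in W(E)^\#$. For each $i\in\{1,\ldots,n\}$, the point $[a_i]$ lies in some $U_{x_{j_i}}$, so $x_{j_i}E\subseteq a_iE$; by minimality of $a_iE$ we get equality, hence $U_{x_{j_i}}=U_{a_i}$. Since every $a\in W(E)^\#$ satisfies $a_iE\subseteq aE$ for some $i$ (equivalently $U_a\subseteq U_{a_i}$), we conclude $\Ec(W(E)^\#)=\bigcup_{i=1}^n U_{a_i}=\bigcup_{i=1}^n U_{x_{j_i}}$. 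Choosing for each $i$ an open set $O_{j_i}$ from the original cover that contains $U_{x_{j_i}}$ yields the desired finite subcover.

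The only delicate step, exactly as in Theorem \ref{Lindelöf}, is the claim that every $aE$ in the family contains some minimal $a_iE$; this is implicit in the finiteness of the list of minimal elements combined with a descending-chain argument, and I would simply appeal to the parallel step in Theorem \ref{Lindelöf} rather than reprove it. Given that, the proof is a pure bookkeeping translation from the countable to the finite setting and presents no further obstacle.
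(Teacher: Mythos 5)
Your forward direction is correct and is exactly the paper's intended argument (the paper itself only says ``analogous to Theorem \ref{Lindelöf}''). The problem is the converse, and it sits precisely at the step you flag and then wave away: the claim that every $aE$ with $a\in W(E)^\#$ contains some minimal element $a_iE$ of the family. Finiteness of the set of minimal elements does \emph{not} imply that every member of the family lies above a minimal one; the ``descending-chain argument'' you invoke would require the family $\{aE\}_{a\in W(E)^\#}$ to satisfy the descending chain condition, which is not assumed anywhere (and by Proposition \ref{noetherian} the relevant chain conditions are tied to divisibility hypotheses on $E$ that are absent here). Without that, the union $\bigcup_{i=1}^n U_{a_i}$ over the minimal elements need not cover $\Ec(W(E)^\#)$, and your finite subcover evaporates.

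This is not a reparable gap: the implication is false as stated. Take $A=\Z$ and $E=\Z$. The family $\{a\Z\}_{a\in W(\Z)^\#}$ has \emph{no} minimal elements (the chains $p\Z\supsetneq p^2\Z\supsetneq\cdots$ never terminate), hence vacuously finitely many; yet $U_a=\{[b]:b\mid a\}$ contains only the divisors of $a$, so any finite union $U_{a_1}\cup\dots\cup U_{a_n}$ omits $[q]$ for every prime $q$ dividing no $a_i$, and $qD_{\Z}(\Z)$ is not compact. The paper records exactly this example immediately after the theorem. What your argument genuinely proves is the weaker statement: \emph{if} the family has finitely many minimal elements \emph{and} every member of the family contains one of them, then $qD_A(E)$ is compact. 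To be fair to you, the paper's own proof (and its Theorem \ref{Lindelöf}, which you mirror) carries the identical unstated hypothesis, so you have faithfully reproduced the paper's reasoning --- gap included --- but the step should not be passed off as ``pure bookkeeping.''
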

\begin{proof}
The proof is analogous to Theorem \ref{Lindelöf}.
\end{proof}
In the following, we provide an example of a topological space having finitely many minimal elements in the set $\{aE\}_{a\in W(E)^\#}$, however $qD_A(E)$ is not still a compact space.
\begin{example}
Consider the $A=\Z$-module $E=\Z$. In this case, the set $\{aE\}_{a\in W(E)^\#}$ has no minimal elements; however, $qD_\Z(\Z)$ is not a compact space.
\end{example}
We conclude the paper with an application of quasi divisor topology which leads to an alternative way to show that localization of $\Z$ at the prime ideal $P$ has infinitely many irreducible elements.
\begin{theorem}\label{infinitudeofirreducibles}
Let $E$ be a factorial module over an integral domain $A$ which is not a field. If $|S|<|A|$ then $A$ has infinitely many irreducible elements.
\end{theorem}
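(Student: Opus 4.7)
My plan is to argue by contradiction: suppose $A$ has only finitely many irreducibles $p_{1},\dots ,p_{n}$ (up to associates). Since $A$ is not a field, $n\ge 1$; since $E$ is factorial, $A$ is a UFD. The target is to derive $|S|=|A|$, contradicting the hypothesis $|S|<|A|$.

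First I would pin down $S$ explicitly. Because $E$ is factorial it is torsion-free, so $\ann(E)=0$; moreover an element $a\in A$ cannot satisfy $aE=E$ unless $a$ is a unit, since otherwise an irreducible factor $p$ of $a$ combined with an irreducible element $m'\in E$ would yield a proper factorization $m'=pn$ of $m'$ inside $E$. Thus $W(E)^{\#}$ is exactly the set of non-zero non-units of $A$. Combining Proposition \ref{primetoMandmaximality} with the maximality criterion in Theorem \ref{isolated}, I would then show that $aE$ is maximal in $\{bE\}_{b\in W(E)^{\#}}$ precisely when $a$ is irreducible: irreducibles give maximality by the proposition, while any factorization $a=bc$ with $b,c$ both non-units produces $b\in W(E)^{\#}$ with $aE\subsetneq bE$ (the inclusion is strict because equality would force $cE=E$ and hence $c$ a unit by the same factoriality argument). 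Consequently $S=\bigcup_{i=1}^{n}A^{*}p_{i}$ is a disjoint union of $n$ associate classes, so $|S|=n\cdot|A^{*}|$; unique factorization yields a bijection $A\setminus\{0\}\leftrightarrow A^{*}\times\mathbb{N}^{n}$, giving $|A|=|A^{*}|\cdot\aleph_{0}$.

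The step I expect to be the main obstacle is showing that $|A^{*}|$ is infinite under the standing assumption of finitely many irreducibles, since otherwise the two cardinality identities above do not produce a contradiction. My plan here is to set $c=p_{1}p_{2}\cdots p_{n}$ and consider the sequence $\{1+c^{k}\}_{k\ge 1}$: for each $i$, $p_{i}\mid c$ implies $c^{k}\equiv 0\pmod{p_{i}}$, so $p_{i}\nmid 1+c^{k}$; and since every non-unit in a UFD is divisible by some irreducible, it follows that $1+c^{k}\in A^{*}$. The powers $c^{k}$ are pairwise distinct because $c$ is a non-unit in the integral domain $A$, so the units $1+c^{k}$ are pairwise distinct and $|A^{*}|\ge\aleph_{0}$. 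Plugging back, $|S|=n\cdot|A^{*}|=|A^{*}|=|A^{*}|\cdot\aleph_{0}=|A|$, contradicting $|S|<|A|$, so $A$ must admit infinitely many irreducibles.
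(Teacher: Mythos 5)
Your proof is correct, but it takes a genuinely different route from the paper's. The paper runs a Furstenberg-style topological argument: assuming finitely many irreducibles $a_1,\dots,a_n$, it invokes the density of $\Ec(\Irr(W(E)^{\#}))$ (Corollary \ref{EC(S)dense}) to write $\Ec(W(E)^{\#})$ as the finite union of the closures $\overline{\{[a_i]\}}$, and then exhibits an Euclid-type element $x_t=a_1^{t}+a_2\cdots a_n$ lying in the supposedly empty intersection of the complements, using Lu's characterization $aE\subseteq bE \iff b\mid a$ for factorial modules. You bypass the topology entirely and do a cardinality count: you identify $W(E)^{\#}$ with the nonzero nonunits of $A$ and $S$ with the set of irreducibles (Proposition \ref{primetoMandmaximality} for one inclusion, torsion-freeness for the strictness of $aE\subsetneq bE$ in the other), so that $|S|=n\cdot|U|$ with $U$ the unit group, while unique factorization gives $|A|=|U|\cdot\aleph_0$; your Euclid-type units $1+c^{k}$ with $c=p_1\cdots p_n$ show $|U|\ge\aleph_0$, whence $|S|=|A|$, contradicting the hypothesis. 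Your version is more elementary, makes explicit the identification $S=\Irr(A)$ that the paper uses only implicitly, and shows exactly where $|S|<|A|$ enters; the paper's version is designed to showcase the density machinery of Section 2. Both arguments implicitly assume $E\neq 0$ (needed to produce an irreducible element of $E$ and conclude that $W(E)$ is precisely the set of nonunits), which is a harmless standing convention.
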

\begin{proof}
Suppose $A$ has finitely many irreducibles in $A$, namely $a_1,\dots,a_n$. It follows that $S$ is finite since there are countably many nonzero nonunit elements together with countably many units by the following injection $f:u(A)\hookrightarrow A^\star$ where $A^\star$ is the set of nonzero nonunit elements of $A$. Moreover, $\operatorname{Irr(W(E)^\#)=Irr(S)}$ as $E$ is a factorial module. Thus, $\Ec(\Irr(W(E^\#))=\Ec(\Irr(S))$ is dense in $qD_A(E)$ by Corollary \ref{EC(S)dense}. Hence, one can conclude that $\overline{\Ec(\Irr(A))}=\bigcup_{i=1}^n \overline{\{[a_i]\}}=\Ec(W(E)^\#)$, particularly $\bigcap_{i=1}^n \overline{\{[a_i]\}}^c =\emptyset$. Now, set $x_m=a_1^m+a_2\dots a_n$ for each $m\in \mathbb N$. This implies that there must be an index $t\in \mathbb N$ such that $x_t\not\in S$. However, we claim that $x_t\in \overline{\{[a_i]\}}^c=\{[b]\in \Ec(W(E)^\#):a_iE\not\subseteq bE\}$. Otherwise, $a_iE\subseteq x_t E=(a_1^t+a_2\dots a_n)E$ implies that $a_1^t+a_2\dots a_n$ divides $a_i$ for each $i$ by \cite[Theorem 2.1]{Lu}, which is impossible. Thus, $A$ has infinitely many irreducible elements.
\end{proof}
\begin{example}
Let $\Z_{(p)}=\{ \frac ab: a,b\in \Z \text{ such that $p\nmid b$}\}$ be a localization of $\Z$ at the prime ideal $(p)$. Consider the factorial $E=\prod_{i\in I} \Z_{(p)}$-module $A=\Z_{(p)}$. Since any nonzero nonunit element of $\Z_{(p)}$ associates to $p$, one can see that $pE=\left(p^{k_0}\frac{a_0}{b_0},\dots, p^{k_i}\frac{a_i}{b_i},\dots \right)$ is maximal in the set $\{bE\}_{b\in W(E)^\#}$ where $k_i\ge 1$ and $(a_i,b_i)=1$. Therefore, $|S|=1<|A|$ which implies that $\Z_{(p)}$ has infinitely many irreducibles by Theorem \ref{infinitudeofirreducibles}.
\end{example}

\end{document}